\newtheorem{theorem}{Theorem}[section]
\newtheorem{lemma}[theorem]{Lemma}
\theoremstyle{definition}
\newtheorem{definition}[theorem]{Definition}
\newtheorem{corollary}[theorem]{Corollary}
\newtheorem{proposition}[theorem]{Proposition}
\newtheorem{remark}[theorem]{Remark}
\numberwithin{equation}{section}
\newcommand{\R}{\mathbb{R}}
\newcommand{\B}{\mathcal{B}}
\newcommand{\F}{\mathscr{F}}
\newcommand{\Z}{\mathbb{Z}}
\newcommand{\Hi}{\mathcal{H}}
\newcommand*{\bigchi}{\mbox{\Large$\chi$}}
\def\Msp{{\boldsymbol M}}
\begin{document}

\title[{Modulation spaces, multipliers associated with the SAFT}]{Modulation spaces, multipliers associated with the special affine Fourier transform}

\author{M. H. A. Biswas}
\address{Department of Mathematics, Indian Institute of Technology Madras, Chennai - 600036, India}
\email{mdhasanalibiswas4@gmail.com}

\author{H. G. Feichtinger}
\address{Faculty of Mathematics, University of Vienna, Oskar-Morgenstern-Platz 1, A-1090 Wien, Austria}
\email{hans.feichtinger@univie.ac.at}
\author{R. Ramakrishnan}
\address{Department of Mathematics, Indian Institute of Technology Madras, Chennai - 600036, India}
\email{radharam@iitm.ac.in}

\subjclass[2020]{Primary 42A38; Secondary 42B25, 42B35}

\keywords{Chirp modulation, Gelfand triple, modulation space, multiplier, short time Fourier transform, time-frequency shift}

\begin{abstract}
We study some fundamental properties of the special affine Fourier transform (SAFT) in connection with the Fourier analysis and time-frequency analysis. We introduce the modulation space $\Msp^{r,s}_A$ in connection with SAFT and prove that if a bounded linear operator between new modulation spaces commutes with $A$-translation, then it is a $A$-convolution operator. We also establish H\"{o}rmander multiplier theorem and Littlewood-Paley theorem associated with the SAFT.
\end{abstract}
\maketitle
\section{Introduction}

\par The special affine Fourier transform (SAFT) was first studied by Abe and Sheridan in \cite{Abe} in connection with optical wave functions. It is an integral transform acting on the optical wave function and is related to the special affine linear transform of the phase space
\begin{equation}\label{eq:Intro1}
\begin{pmatrix}
t^\prime\\\omega^\prime
\end{pmatrix}
=\begin{pmatrix}
a&b\\
c&d
\end{pmatrix}
\begin{pmatrix}
t\\ \omega
\end{pmatrix}
+
\begin{pmatrix}
p \\ q
\end{pmatrix},
\end{equation}
where all the parameters $a,b,c,d,p,q$ are real and $ad-bc=1$. The integral representation of the wave function transformation connected with (\ref{eq:Intro1}) defines the special affine Fourier transform given as follows. For $f\in L^1(\R)$,
\textcolor{black}{\begin{equation}
\F_Af(\omega)=\frac{1}{\sqrt{|b|}}\int_\R f(t)e^{\frac{\pi i}{b}(at^2+2pt-2\omega t+2(bq-dp)\omega+d\omega^2)}dt, ~\omega \in \R,
\end{equation}}
where $A$ stands for the set of parameters $\{a,b,c,d,p,q\}$, with $b\neq 0$. When $p=q=0,$ it is called the linear canonical transform. For $A=\{cos\theta, sin\theta, -sin\theta, cos\theta, 0,0\} $ it is called the fractional Fourier transform, and for $A=\{1,b,0,1,0,0\}$, it is known as the Fresnel transform.

\par In \cite{Bhandari}, Bhandari and Zayed studied shift invariant spaces associated with the SAFT using chirp modulation. Recently in \cite{H2}, Filbir et al defined a new translation operator $T_x^A$ and studied the corresponding shift invariant spaces in the context of the SAFT. This new translation operator leads to $A$-convolution and is used to generalize Wendel's theorem for multipliers associated with the SAFT. Recently in \cite{GrafakosAcha21}  Chen et al studied variants of the  H\"{o}rmander multiplier theorem and of the Littlewood-Paley theorem associated with fractional Fourier transforms.

\par Modulation spaces were introduced by Feichtinger in \cite{fe83-4} and published in  \cite{FeichtingerLCA}. Meanwhile they have become a universal
tool for various branches of analysis, including pseudo-differential
operators, harmonic analysis in general and specifically time-frequency
and Gabor analysis. 
 The well known spaces such as weighted $L^2$ spaces, Bessel potential spaces, Shubin-Sobolev spaces are important examples of modulation spaces. (See \cite{BoggiElenaGro04}, \cite{Luef11}). \textcolor{black}{There are certain interesting embedding results among modulation spaces. We refer to (\cite{embedding05}, \cite{embedding04}, \cite{embedding12}) in this connection. Further, using convolution, it has been shown in \cite{toft04} that $\Msp^{p_1,q_1}\star \Msp^{p_2,q_2}\subset \Msp^{p_0,q_0},$ where $1/p_1+1/p_2=1+1/p_0, 1/q_1+1/q_2=1/q_0$. For a study of modulation spaces in connection with time-frequency analysis, we refer to the books \cite{corodaro20}, \cite{grochenig_book} and for applications to pseudo-differential operators and partial differential equations, we refer to \cite{benyi20}.}

\par In this paper, we look at some fundamental results in Fourier analysis in connection with $A$-translation, $A$-convolution and the SAFT. We will show that the usual approximate identity in $L^1(\R)$ also act in an expected way on $L^r(\R)$ via $A$-convolution. Further, we obtain an analogue of
the fundamental formula for the Fourier transform,
Young's inequality,  and the Hausdorff-Young inequality
for the SAFT. In the classical case, it is well known that $\widehat{(f^\prime)}(\xi)=2\pi i \xi \widehat{f}(\xi)$. We obtain an analogous result for the SAFT using the differential operator $B=\frac{d}{dt}+\frac{2\pi ia }{b}t.$ We also show that the operator $B$ commutes with $A$-translations. We further obtain the solution $u$ of the heat equation associated with the operator $B^*B$ in terms of $A$-convolution, namely $u(x,t)= g\star_Ah_t(x)$, where $u(x,0)=g(x)$, as in the classical case. It is well known that a multiplicative linear functional $h$ on $(L^1(\R),\star)$ is of the form $h(f)=\widehat{f}(\xi_0)$, for some $\xi_0 \in \R$. On the other hand in \cite{Jaming2010}, Jaming proved that if $T:L^1(\R)\to C_0(\R)$ is a continuous linear operator which converts a convolution product into the pointwise product, then $T$ turns out to be $T(f)(\xi)=\bigchi_E(\xi)\widehat{f}(\phi(\xi)),\ \xi \in \R$, for some $E\subset \R$ and a function $\phi:\R \to \R.$ We prove the analogue of both the results in this paper using SAFT.

\par In the next part of the paper we study some fundamental properties of the SAFT in connection with time-frequency analysis. In fact, we write the covariance property of the short time Fourier transform using $A$-translation and $A$-modulation. We also obtain an analogue of fundamental identity of time-frequency analysis using the SAFT. We prove that $f\in \Msp^r_m$ will give $C_sf\in \Msp^{r}_{\nu_s}$, where $$\textcolor{black}{C_sf(t)=e^{\pi i st^2}f(t)}$$ and $\nu_s(x,\omega)=m(x,\omega-sx)$ and if $f\in \Msp_{v_\ell}^r$ then $\F_A(f)\in \Msp_{w_\ell}^r$, where $v_\ell(x,\omega)=(1+x^2+\omega^2)^{\ell/2}$ and $w_\ell=\big(1+(c^2+d^2)x^2+(a^2+b^2)\omega^2-2(ac+bd)x\omega\big)^{\ell/2}$. \textcolor{black}{As a consequence, we conclude that the modulation space $\Msp^r_{v_\ell}$ is invariant under SAFT.} Further, we prove that the special affine Fourier transform and the pointwise multiplication operators defined using the auxiliary functions $\lambda_A, \rho_A$ and $\eta_A$ are Banach Gelfand triple automorphisms on the Banach Gelfand triple $(S_0(\R),L^2(\R),S_0^\prime(\R))$, as described in \cite{cofelu08}. Here $S_0(\R)$ coincides with the modulation space $M^1(\R)$, and the dual space
is just $M^\infty(\R)$ (see \cite{grochenig_book}).

\par We introduce \textcolor{black}{weighted} modulation spaces $\Msp^{r,s}_A$ in connection with the SAFT and study some fundamental properties. \textcolor{black}{It is well known that the classical weighted modulation space $\Msp^1_v$ possesses certain minimality property, namely if a time-frequency shift invariant Banach space $B$ of tempered distributions has non-zero intersection with $\Msp^1_v$, then $\Msp^1_v$ is continuously embedded in $B$. We prove an analogous minimality property for the $A$-modulation space $\Msp^1_{A,v}$ under $A$-time-frequency shifts.} Subsequently we study multipliers in connection with the SAFT and verify how some of the classical theorems in multiplier theory for Fourier transform can be transferred to this setting.
\par In \cite{FeiAcha06}, Feichtinger and Narimani proved that if $T:\Msp^{r_1,s_1}\to \Msp^{r_2,s_2}$ is bounded, linear and satisfying $TT_x=T_xT$ for all $x\in \R$, then there exists a unique $u\in S_0^\prime(\R)$ such that $Tf=u\star f,$ for $f\in S_0(\R).$ In this paper, we prove that if $T:\Msp^{r_1,s_1}_A\to \Msp^{r_2,s_2}_A$ is bounded, linear and satisfying $TT_x^A=T_x^AT$ then there exists a unique $u\in S_0^\prime(\R)$ such that $Tf=u\star_Af$, for all $f\in S_0(\R)$. We also establish H\"{o}rmander multiplier theorem and Littlewood-Paley theorem associated with the SAFT.

\section{Notation and preliminaries}

\par Let $C_c(\R)$ denote the space of all compactly supported continuous,
complex valued functions on $\R$, $S(\R)$, the space of Schwartz class
of rapidly decreasing functions and its dual, $S^\prime(\R)$, the space of tempered distributions. For a complex valued function $f$ defined on $\R$, we write \textcolor{black}{$f^\checkmark (x)=f(-x)$} for the involution. We also
make use of the following standard notations
$$T_sf(t)=f(t-s), \quad M_sf(t)= e^{2\pi ist}f(t), \quad D_sf(t)=\frac{1}{\sqrt{|s|}} f(t/s),
 $$
for the translation, modulation, dilation operators respectively.
A particular role in our  presentation is taken by the
chirp modulation operator, given by $C_\frac{a}{b}f(t)=e^{\frac{\pi ia}{b}t^2}f(t).$

\begin{definition}[\cite{H2}]
For any  $s \in \R$ the $A$-translation of a measurable function $f$ by $s$, denoted by $T_s^Af$, is defined as
\textcolor{black}{\begin{equation}
T_s^Af(t)=e^{-\frac{2\pi i a}{b}s(t-s)}f(t-s).
\end{equation}}
Further, one has
\begin{equation}\label{eq:T_xT_x^A}
C_\frac{a}{b}T_s^Af(t)=e^{\frac{\pi i a}{b}s^2}T_s(C_\frac{a}{b}f)(t).
\end{equation}
\end{definition}

\begin{definition}[\cite{H2}]
For $f,g\in L^1(\R)$  the $A$-convolution of $f$ and $g$,
is given by
\begin{equation}\label{eq:A-convolution}
 (f\star_Ag)(x)=\frac{1}{\sqrt{|b|}}\int_\R f(s) T_s^Ag(x) ds.
\end{equation}
It is compatible with chirp modulation operator as follows:
\begin{equation}\label{eq:A-convolution, convolution}
C_\frac{a}{b}(f\star_Ag)=\frac{1}{\sqrt{|b|}}(C_\frac{a}{b}f\star C_\frac{a}{b}g).
\end{equation}
\end{definition}
\par \textcolor{black}{It is interesting to note that
for $f,g \in C_c(\R)$  \eqref{eq:A-convolution} can be written as a vector-valued integral
$$
\big (\int_\R T_s^Agf(s)ds\big)(x)=
\delta_x\big( \int_\R T_s^A g d\mu_{f(s)}(s) \big),
$$
where $\delta_x$ is the Dirac measure at $x$ (as described in \cite{fe22}). This can be viewed as an integrated action of $f$ in $L^1(\R)$ on $g$.}
\par We use the following auxiliary functions in this paper.
\begin{itemize}
\item $\rho_A(t)=e^{\frac{\pi i}{b}(at^2+2pt)}$.

\item $\eta_A(\omega)=e^{\frac{\pi i}{b}(d\omega^2+2\Omega\omega)}, \ \Omega=bq-dp.$

\item $\lambda_A(t)=e^{\frac{\pi i}{b}at^2}.$\\
\end{itemize}
\par \textcolor{black}{Notice that $\lambda_A, \rho_A,\eta_A$ are continuous functions of absolute value $1$. Hence these auxiliary functions induce unitary pointwise multiplication operators on $L^2(\R)$ and isometry on $L^r(\R)$ for $1\leq r\leq \infty$.}

\par 
They allow to express the SAFT with the help of the classical
Fourier transform  as follows:
\textcolor{black}{\begin{equation}\label{eq:SAFT-FT}
\F_A(f)(\omega)=\frac{\eta_A(\omega)}{\sqrt{|b|}}(\rho_Af)~\widehat{}~(\omega/b)= \frac{\eta_A(\omega)}{\sqrt{|b|}}(C_\frac{a}{b}f)~\widehat{}~\big(\frac{\omega-p}{b}\big).
\end{equation}}

\begin{definition}
\textcolor{black}{For any $s \in \R$ the   $A$-modulation of a measurable function $f$ is defined as:}
\textcolor{black}{\begin{equation}
 M_s^Af(t)=\rho_A(-s)M_\frac{s}{b}f(t)=e^{\frac{\pi i}{b}(as^2-2ps+2st)}f(t).
 \end{equation}}
The modified version of translation, modulation and Fourier transform are
compatible in the expected way, i.e.\ one has
\textcolor{black}{\begin{equation}
\F_A(T_s^Af)(\omega)=M_{-s}^A\F_A(f)(\omega).
\end{equation}}
\end{definition}

\begin{definition}\label{S_S' convolution}\textcolor{black}{\cite{Rudin_FA}}
Let $\phi \in S(\R)$ and $\Lambda \in S^\prime (\R)$. Then $\Lambda \star \phi$ is defined as
$$
(\Lambda \star \phi) (x)=\Lambda(T_x \textcolor{black}{\phi^\checkmark)}, \ \ x\in \R.
$$
\end{definition}

\par Recall that a weight function $\nu(x,\omega)$ on $\R^2$ is said to be \textcolor{black}{\textit{submultiplicative}} if $\nu(x_1+x_2,\omega_1+\omega_2)\leq \nu(x_1,\omega_1)\nu(x_2,\omega_2)$. A weight function $m(x,\omega)$ is said to be $\nu$ \textcolor{black}{\textit{moderate}} if there exists $C>0$ such that $m(x_1+x_2,\omega_1+\omega_2)\leq Cm(x_1,\omega_1)\nu(x_2,\omega_2)$. A weight function $m$ is called \textcolor{black}{\textit{moderate}} if it is moderate with respect to some submultiplicative weight. \textcolor{black}{A weight $m$ is said to have polynomial growth if there exists $C>0$ such that $m(x,\omega)\leq C(1+x+\omega)^s$, for some $s\geq 0$. In this paper, we consider moderate weights of polynomial growth.}
\par Let $m(x,\omega)$ be a moderate weight \textcolor{black}{of polynomial growth} on $\R^2$, $g\in S(\R)$ and $1\leq r,s<\infty$. Then the \textcolor{black}{\textit{modulation space}} is defined as follows.
$$
\Msp_{m}^{r,s}=\{f\in S^\prime (\R):\|f\|_{\Msp_{m}^{r,s}}<\infty\},
$$
where $$\|f\|_{\Msp_{m}^{r,s}}=\left(\int_\R\big(\int_\R |f\star M_\omega g(x)|^rm(x,\omega)^rdx\big)^\frac{s}{r}d\omega\right)^\frac{1}{s}.$$
If $r=s,$ then the modulation space $\Msp^{r,r}_{m}$ is denoted by $\Msp^r_{m}$ and if $m(x,\omega)=1,$ then we write $\Msp^{r,s}$, $\Msp^r$ for $\Msp^{r,s}_{m}$, $\Msp^{r,r}_{m}$ respectively.
\par The modulation space $\Msp^1$ is popularly known as \textcolor{black}{\textit{Feichtinger Segal algebra}}, denoted by $S_0(\R)$ and its dual is denoted by $S_0^\prime(\R).$ \par One can define $\Lambda\star \phi$ for $\phi \in S_0(\R)$ and $\Lambda \in S_0^\prime(\R)$ as in Definition \ref{S_S' convolution}.

\par It is well known that modulation spaces are invariant under translation, modulation and dilation.

\begin{definition}
A linear functional $h$ on a Banach algebra $\B$ is said to be a \textcolor{black}{\textit{multiplicative linear functional}} if $h(xy)=h(x)h(y)$ for all $x,y\in \B.$
\end{definition}

\begin{definition}
A \textcolor{black}{\textit{Banach Gelfand triple}} consists of a Banach space $(\B, \|\cdot \|_\B)$ which is continuously and densely embedded into some Hilbert space $\Hi$, which in turn is $weak^\star$-continuously and densely embedded into the dual Banach space $(\B^\prime, \| \cdot \|_{\B^\prime})$.
\par The well known examples are $\big(S_0(\R), L^2(\R),S_0^\prime(\R)\big)$, \textcolor{black}{$\big( \Hi_s(\R), L^2(\R), \Hi_s^\prime(\R) \big)$, where the Sobolev space $\Hi_s(\R)$ is defined by
$$
\Hi_s(\R)=\{f:(1+|\cdot |^2)^{s/2}\widehat{f}\in L^2(\R)\}.
$$}
\end{definition}

\begin{definition}\cite{Feich_Banach_Gelfand_triple}
If $(\B_1, \Hi_1, \B^\prime_1)$ and $(\B_2, \Hi_2, \B^\prime_2)$ are Banach Gelfand triples then an operator $T$ is called a \textcolor{black}{\textit{[unitary] Gelfand triple isomorphism}} if
\begin{itemize}
\item[(i)] $T$ is an isomorphism between $\B_1$ and $\B_2$.
\item[(ii)] $T$ is a [unitary operator resp.] isomorphism from $\Hi_1$ to $\Hi_2$.
\item[(iii)] $T$ extends to a $weak^\star$ isomorphism as well as a norm-to-norm continuous isomorphism between $\B^\prime_1$ and $\B^\prime_2$.
\end{itemize}
\end{definition}

\begin{lemma}\cite{Feich_Zimmermann} \label{lemma:gelfand automorphism}
Let $T:L^2(\R) \to L^2(\R)$ be a unitary operator. Then the operator $T$ extends to a Banach Gelfand triple isomorphism between $\big(S_0(\R), L^2(\R), S_0^\prime(\R)\big)$ and $\big(S_0(\R), L^2(\R), S_0^\prime(\R)\big)$ if and only if the restriction $T|_{S_0(\R)}$ defines a bounded bijective linear mapping of $S_0(\R)$ onto itself.
\end{lemma}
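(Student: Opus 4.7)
The plan is to dispatch the forward direction by definition, and for the converse to construct an extension of $T$ to $S_0^\prime(\R)$ via duality and verify the three conditions of a Banach Gelfand triple isomorphism in order. The forward implication is essentially tautological: condition (i) in the definition of a Gelfand triple isomorphism is exactly the statement that $T|_{S_0(\R)}$ is a bounded bijection of $S_0(\R)$ onto itself.

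For the converse, I assume $T|_{S_0(\R)}$ is a bounded linear bijection on the Banach space $S_0(\R)=\Msp^1(\R)$. The Open Mapping Theorem then supplies a bounded inverse $S:=(T|_{S_0(\R)})^{-1}$ on $S_0(\R)$, settling (i); condition (ii) is precisely the unitarity hypothesis. For (iii), I would extend $T$ to $S_0^\prime(\R)$ by the adjoint prescription
$$
\langle \widetilde T\sigma,\phi\rangle := \langle \sigma, S\phi\rangle,\qquad \sigma \in S_0^\prime(\R),\ \phi \in S_0(\R).
$$
Boundedness of $S$ on $S_0(\R)$ makes $\widetilde T$ a bounded linear map on $S_0^\prime(\R)$, with bounded inverse $\sigma\mapsto \sigma\circ T|_{S_0(\R)}$, hence a norm-to-norm topological isomorphism of $S_0^\prime(\R)$. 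Weak-$*$ continuity is immediate from the definition: if $\sigma_\alpha\xrightarrow{w^*}\sigma$, then $\langle \widetilde T\sigma_\alpha,\phi\rangle=\langle\sigma_\alpha,S\phi\rangle\to\langle\sigma,S\phi\rangle=\langle\widetilde T\sigma,\phi\rangle$ for every $\phi\in S_0(\R)$.

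The final step is the consistency check, namely that $\widetilde T$ really extends $T$ under the canonical chain $S_0(\R)\hookrightarrow L^2(\R)\hookrightarrow S_0^\prime(\R)$, where an $L^2$-function acts on $S_0(\R)$ via the inner product. This is where the unitarity identity $T^*=T^{-1}=S$ enters: for $f\in L^2(\R)$ and $\phi\in S_0(\R)$,
$$
\langle Tf,\phi\rangle_{L^2}=\langle f,T^{-1}\phi\rangle_{L^2}=\langle f,S\phi\rangle,
$$
which matches the definition of $\widetilde T f$ viewed inside $S_0^\prime(\R)$; specializing to $f\in S_0(\R)$ gives compatibility with $T|_{S_0(\R)}$ as well. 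The only real obstacle I anticipate is bookkeeping: the canonical embedding $L^2\hookrightarrow S_0^\prime$ is usually sesquilinear, whereas the dual pairing on $S_0^\prime\times S_0$ is bilinear, so one must fix a convention consistently throughout. Once that convention is pinned down, unitarity reduces the consistency check to a one-line computation and no genuinely hard step remains.
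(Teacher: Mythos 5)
The paper itself contains no proof of this lemma: it is quoted from \cite{Feich_Zimmermann} and used as a black box, so there is no in-paper argument to compare yours against. Judged on its own merits, your proof is correct and is the expected duality argument. The forward direction is, as you say, tautological from condition (i) of the definition of a Gelfand triple isomorphism, and for the converse the extension of $T$ to $S_0^\prime(\R)$ as the Banach-space adjoint $\widetilde T=S^{\ast}$ of $S=(T|_{S_0(\R)})^{-1}$ (bounded by the Open Mapping Theorem) is the right construction; adjoints of bounded operators are automatically weak-$*$--to--weak-$*$ continuous, which is exactly what your net computation verifies.

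Two small points would make it airtight. First, condition (iii) demands a weak-$*$ \emph{isomorphism}, so you should also record that the inverse $\sigma\mapsto\sigma\circ T|_{S_0(\R)}$ is weak-$*$ continuous; since it is the adjoint of the bounded operator $T|_{S_0(\R)}$, the same one-line argument you already gave applies, so this is an omission of a sentence rather than of an idea. Second, the convention caveat you flagged is indeed the only genuine pitfall, and you resolved it correctly: with the pairing on $S_0^\prime(\R)\times S_0(\R)$ chosen to extend the $L^2$ inner product (conjugate-linear in the second argument), unitarity gives
\begin{equation*}
\langle Tf,\phi\rangle_{L^2}=\langle f,T^{-1}\phi\rangle_{L^2}=\langle f,S\phi\rangle ,
\end{equation*}
and the consistency of $\widetilde T$ with $T$ on $L^2(\R)$, and hence on $S_0(\R)$, is immediate; had you used a bilinear pairing instead, the correct definition would be $\langle\widetilde T\sigma,\phi\rangle=\langle\sigma,\overline{S\overline{\phi}}\rangle$, because $S$ need not commute with complex conjugation. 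With these two remarks added, your argument is a complete and correct proof of the cited lemma.
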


\section{Fourier analysis and the special affine Fourier transform}
We consider $C_{-\frac{a}{b}}\bigchi_{[0,1]}$ and look at its SAFT. For $\omega \neq p$ we have

\begin{align*}
\F_A(C_{-\frac{a}{b}}\bigchi_{[0,1]})(\omega)=\frac{\eta_A(\omega)}{\sqrt{|b|}}\bigchi_{[0,1]}~\widehat{ }~(\frac{\omega -p}{b})=\frac{\eta_A(\omega)}{\sqrt {|b|}}\frac{1-e^{-2\pi i\frac{\omega -p}{b}}}{2\pi i\frac{\omega-p}{b}},
\end{align*}
and $\F_A(C_{-\frac{a}{b}}\bigchi_{[0,1]})(p)=\frac{\eta_A(p)}{\sqrt{|b|}}$.

Similarly we have $\F_A(C_{-\frac{a}{b}}\bigchi_{[-\frac{1}{2},\frac{1}{2}]})(\omega)=\frac{\eta_A(\omega)}{\sqrt{|b|}}sinc( \frac{w-p}{b}),$ where
\begin{equation*}
sinc(x) = \begin{cases}
            \frac{sin\pi x}{\pi x}, &x\neq0\\
            1, & x=0
            \end{cases}.
\end{equation*}

In particular, for the fractional Fourier transform, we take $A_\theta=\{cos\theta, sin\theta,\\ -sin\theta, cos\theta, 0 ,0\}$. Then
$$
\F_{A_\theta}(C_{-\frac{a}{b}}\bigchi_{[-\frac{1}{2},\frac{1}{2}]})(\omega)=\frac{1}{\sqrt{|sin\theta|}}e^{\pi i \omega^2cot\theta}sinc(\omega/sin\theta).
$$

\textcolor{black}{\begin{proposition}
The special affine Fourier transform $\F_A$ is a bijection on $S(\R)$.
\end{proposition}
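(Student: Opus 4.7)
The plan is to exploit the factorization \eqref{eq:SAFT-FT}, which expresses $\F_A$ as a composition of four elementary operations: pointwise multiplication by $\rho_A$ (or equivalently by the chirp $\lambda_A$ together with the modulation $e^{2\pi i pt/b}$), the classical Fourier transform $\widehat{\phantom{f}}$, dilation by the factor $1/b$, and pointwise multiplication by $\eta_A$. If each of these factors is a bijection from $S(\R)$ onto itself, then so is their composition.

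The only nontrivial point is that $\rho_A$ and $\eta_A$ are not themselves Schwartz functions; they are smooth chirps of unit modulus. However, multiplication by a $C^\infty$ function all of whose derivatives have at most polynomial growth is well known to map $S(\R)$ continuously into itself. I would first verify this for $\rho_A(t)=e^{\pi i(at^2+2pt)/b}$ by differentiating: each derivative $\rho_A^{(k)}(t)$ is a product of a polynomial in $t$ of degree $k$ with $\rho_A(t)$, so $|\rho_A^{(k)}(t)|\le P_k(|t|)$ for some polynomial $P_k$. Leibniz' rule then gives, for $f\in S(\R)$, that $(\rho_A f)^{(k)}$ is a finite sum of products of polynomially bounded chirp-factors with derivatives of $f$, hence still rapidly decreasing. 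The same argument applies verbatim to $\eta_A$.

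Next, multiplication by $\rho_A$ has inverse multiplication by $\overline{\rho_A}=\rho_A^{-1}$, which is of the same form (a smooth function of unit modulus with polynomially bounded derivatives), so it is likewise a bijection $S(\R)\to S(\R)$; similarly for $\eta_A$. The classical Fourier transform is the prototypical bijection on $S(\R)$, and for $b\neq0$ the dilation $D_b$ (or equivalently the substitution $\omega\mapsto\omega/b$) is a topological automorphism of $S(\R)$ with inverse $D_{1/b}$.

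Composing these four bijections in the order dictated by \eqref{eq:SAFT-FT}, namely $f \mapsto \rho_A f \mapsto \widehat{\rho_A f} \mapsto \widehat{\rho_A f}(\cdot/b) \mapsto |b|^{-1/2}\eta_A\cdot\widehat{\rho_A f}(\cdot/b)=\F_A f$, yields a bijection of $S(\R)$. The only step that could be viewed as an obstacle is the polynomial-growth verification for the chirps, but since $\rho_A$ and $\eta_A$ are of unit modulus and their derivatives produce only polynomial factors in $t$ or $\omega$, this is immediate; no genuine difficulty arises.
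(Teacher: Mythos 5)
Your proposal is correct and follows essentially the same route as the paper: both use the factorization \eqref{eq:SAFT-FT} to write $\F_A$ as a composition of chirp/modulation multiplications, the dilation $D_b$, and the classical Fourier transform, each a bijection of $S(\R)$. The only difference is cosmetic --- the paper further splits $\rho_A$ and $\eta_A$ into a chirp $C_s$ times a modulation and declares their bijectivity on $S(\R)$ easy, whereas you verify the multiplier property directly via the Leibniz rule and polynomial bounds on the derivatives of the chirps, which fills in exactly the step the paper leaves to the reader.
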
}
\begin{proof}
We have
\begin{align*}
\F_A(f)(\omega)=\frac{\eta_A(\omega)}{\sqrt{|b|}}\F(\rho_Af)(\omega/b)&=e^{\frac{\pi i}{b}(d\omega^2+2(bq-dp)\omega)}D_b\F(\rho_Af)(\omega)\\
&= C_\frac{d}{b}M_\frac{bq-dp}{b}D_b\F(C_\frac{a}{b}M_\frac{p}{b}f)(\omega),
\end{align*}
where $\F$ denotes the classical Fourier transform. In other words, $\F_A$ is a composition of $C_s$, modulation, dilation and the classical Fourier transform. It is easy to see that $C_s$, modulation and dilation are bijections on $S(\R)$. Hence the result follows from the fact that $\F:S(\R)\to S(\R)$ is a bijection.
\end{proof}

\begin{theorem}
Let $f,g\in S(\R)$. Then we have the
\textcolor{black} {
variants of the fundamental relation of Fourier analysis} for the SAFT.
\begin{itemize}
\item[(i)] $\int_\R \overline{\eta}_A(\omega)\F_A(\overline{
\rho}_Af)(\omega)g(\omega)d\omega = \int_\R \overline{\eta}_A(\omega)f(\omega)
\F_A(\overline{\rho}_Ag)(\omega)d\omega.$
\item[(ii)] $\int_\R \overline{\eta}_A(\omega)\F_Af (\omega)\rho_Ag(\omega)d\omega = \int_\R \overline{\eta}_A(\omega)\rho_Af(\omega)\F_Ag
(\omega)d\omega. $
\item[(iii)]\label{multiplication formula} If $a=d$ and $p=q=0$, then $\int_\R \F_A(f)(\omega)g(\omega)d\omega =\int_\R f(\omega)\F_A(g)(\omega)d\omega.$
\end{itemize}
\end{theorem}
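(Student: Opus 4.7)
The plan is to reduce all three identities to the classical fundamental relation of Fourier analysis, namely $\int_\R \widehat{F}(\xi)G(\xi)\,d\xi=\int_\R F(\xi)\widehat{G}(\xi)\,d\xi$ for $F,G\in S(\R)$, by exploiting the factorization \eqref{eq:SAFT-FT}:
\[
\F_A(f)(\omega)=\frac{\eta_A(\omega)}{\sqrt{|b|}}\,\widehat{\rho_A f}(\omega/b).
\]
The two key structural facts I will use throughout are that $|\eta_A|=|\rho_A|=1$ (so $\overline{\eta}_A\eta_A=1$ and $\overline{\rho}_A\rho_A=1$), and that the dilation by $1/b$ inside the Fourier transform can be absorbed by a substitution $\xi=\omega/b$ without changing the form of the fundamental relation.

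For \textbf{(i)}, I would compute $\F_A(\overline{\rho}_A f)(\omega)=\frac{\eta_A(\omega)}{\sqrt{|b|}}\widehat{f}(\omega/b)$, since $\rho_A\overline{\rho}_A=1$. Multiplying by $\overline{\eta}_A(\omega)$ cancels the $\eta_A$-factor, so the left-hand side becomes $\frac{1}{\sqrt{|b|}}\int_\R\widehat{f}(\omega/b)g(\omega)\,d\omega$. The same calculation applied to the right-hand side yields $\frac{1}{\sqrt{|b|}}\int_\R f(\omega)\widehat{g}(\omega/b)\,d\omega$. The two expressions agree by the substitution $\xi=\omega/b$ (which contributes a factor $|b|$) combined with the scaling identity $\widehat{g(b\,\cdot)}(\xi)=\frac{1}{|b|}\widehat{g}(\xi/b)$ and the classical fundamental relation for $f$ and $g(b\,\cdot)$.

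For \textbf{(ii)}, substituting \eqref{eq:SAFT-FT} directly into both sides reduces the identity to
\[
\int_\R \widehat{\rho_A f}(\omega/b)\,\rho_A g(\omega)\,d\omega=\int_\R \rho_A f(\omega)\,\widehat{\rho_A g}(\omega/b)\,d\omega,
\]
which is again the classical fundamental relation applied to $F=\rho_A f$ and $G=\rho_A g$, after the same substitution $\xi=\omega/b$. Finally, \textbf{(iii)} is immediate from (ii): with $a=d$ and $p=q=0$ the auxiliary functions coincide, $\rho_A(\omega)=\eta_A(\omega)=\lambda_A(\omega)$, so $\overline{\eta}_A(\omega)\rho_A(\omega)=1$, and both sides of (ii) collapse to the two sides of (iii).

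The only non-routine step is the careful bookkeeping of the dilation factor $1/b$: one must keep track of the change of variable $\xi=\omega/b$ on both sides simultaneously, so that the Jacobian $|b|$ is paired correctly with the scaling identity for $\widehat{g(b\,\cdot)}$. Apart from that, all manipulations are algebraic cancellations of unit-modulus factors. Since $f,g\in S(\R)$, absolute convergence of every integral and the validity of Fubini are automatic, so no additional analytic justification will be needed.
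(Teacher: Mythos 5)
Your proposal is correct, and for parts (i) and (ii) it is essentially the paper's own argument: both reduce everything to the classical fundamental relation via the factorization \eqref{eq:SAFT-FT}, the only cosmetic difference being that the paper carries the dilation bookkeeping through the operators $D_b$ and $D_{1/b}$ (using $D_b\widehat{f}=(D_{1/b}f)\,\widehat{}\,$) where you perform the substitution $\xi=\omega/b$ by hand together with the scaling identity $\widehat{g(b\,\cdot)}(\xi)=\frac{1}{|b|}\widehat{g}(\xi/b)$; these are the same computation. Where you genuinely diverge is part (iii): the paper proves it directly, writing out the SAFT kernel, noting that under $a=d$, $p=q=0$ it becomes $\frac{1}{\sqrt{|b|}}e^{\frac{\pi i}{b}(at^2+a\omega^2-2\omega t)}$, which is symmetric in $t$ and $\omega$, and then applying Fubini. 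You instead observe that under these hypotheses $\Omega=bq-dp=0$, so $\rho_A=\eta_A=\lambda_A$ and $\overline{\eta}_A\rho_A\equiv 1$, whence (iii) is literally the special case of (ii) with the unit-modulus weights cancelled. Both are valid; the paper's route makes visible the structural reason (iii) holds, namely the symmetry of the kernel, while yours is shorter and shows that (iii) carries no information beyond (ii) once the weights coincide. One small point worth making explicit in a write-up: the reduction presumes $\rho_A f\in S(\R)$ for $f\in S(\R)$, which holds because the chirp $\rho_A$ is smooth with polynomially bounded derivatives; with that noted, your remark that all integrals converge absolutely is justified (and in fact no Fubini is needed in (i) and (ii), since the classical fundamental relation already encapsulates it).
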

\begin{proof}

We have
\begin{align*}
\F_Af(\omega)=\frac{\eta_A(\omega)}{\sqrt{|b|}}(\rho_Af)~\widehat{}~(\omega/b)
\end{align*}
and
\begin{align*}
\F_A(\overline{\rho}_Af)(\omega)=\frac{\eta_A(\omega)}{\sqrt{|b|}}\widehat{f}(\omega/b).
\end{align*}
(i) Consider
\begin{align*}
\int_\R \overline{\eta}_A(\omega)\F_A(\overline{
\rho}_Af)(\omega)g(\omega)d\omega &= \frac{1}{\sqrt{ |b|}}\int_\R \widehat{f}(\omega/b)g(\omega)d\omega\\
&=\int_\R D_b\widehat{f}(\omega)g(\omega)d\omega\\
&=\int_\R (D_{1/b}f)\ \widehat{} \ (\omega)g(\omega)d\omega\\
&=\int_\R D_{1/b}f(\omega)\widehat{g}(\omega)d\omega\\
&=\sqrt{|b|}\int_\R f(b\omega)\widehat{g}(\omega)d\omega\\
&= \frac{1}{\sqrt{ |b|}} \int_\R f(\omega)\widehat{g}(\omega/b)d\omega\\
&=\int_\R \overline{\eta}_A(\omega)f(\omega)
\F_A(\overline{\rho}_Ag)(\omega)d\omega,
\end{align*}
using 
the fundamental formula for the Fourier transform and applying a change of variables.\\
Similarly we can prove (ii).\\
(iii) Consider
\begin{align*}
\int_\R \F_A(f)(\omega)g(\omega)d\omega &=\frac{1}{\sqrt{|b|}}\int_\R g(\omega) \int_\R e^{\frac{\pi i}{b}(at^2+a\omega^2 -2\omega t)}f(t)dtd\omega\\
&=\int_\R f(t)\int_\R \frac{1}{\sqrt{|b|}}e^{\frac{\pi i}{b}(at^2+a\omega^2 -2\omega t)}g(\omega)d\omega dt\\
&=\int_\R f(t)\F_A(g)(t)dt,
\end{align*}
using Fubini's theorem.
\end{proof}

\textcolor{black}{\begin{remark}
We can take $f,g\in S_0(\R)$ in the 
fundamental formula, either by observing that all the
integrals are well defined (even as Riemann integrals).
\end{remark}}

\begin{remark}
Theorem \ref{multiplication formula} (iii) leads to the 
fundamental formula for the fractional Fourier transform and the Fresnel transform. Let $A_\theta=\{cos\theta, sin\theta, -sin\theta, cos\theta, 0 ,0\}$ and $A_\lambda=\{1,\lambda,0,1,0,0\}$. Then the fundamental formula
for the fractional Fourier transform and the Fresnel transform read
\begin{equation}
\int_\R \F_{A_\theta}(f)(\omega)g(\omega)d\omega =\int_\R f(\omega)\F_{A_\theta}(g)(\omega)d\omega,
\end{equation}
and
\begin{equation}
\int_\R \F_{A_\lambda}(f)(\omega)g(\omega)d\omega =\int_\R f(\omega)\F_{A_\lambda}(g)(\omega)d\omega
\end{equation}
respectively.
\end{remark}

\par Let $B$ be the differential operator defined by $\frac{d}{dt}+\frac{2\pi i a}{b}t$. Then $B^*=-B$. Further $BB^*=B^*B= -(\frac{d^2}{dt^2}+\frac{4\pi i a}{b}t\frac{d}{dt}-\frac{4\pi ^2a^2}{b^2}t^2+\frac{2\pi i a}{b}I),$ where $I$ is the identity operator.

\begin{proposition}\label{pro:diff operator}
If $f\in S(\R)$, then the following statements hold.
\begin{itemize}
\item[(i)] $\F_A(Bf)(\omega)=2\pi i\frac{\omega -p}{b}\F_A(f)(\omega).$
\item[(ii)] $B(\F_Af)(\omega)=\frac{2\pi i}{b}(a\omega +d\omega +\Omega)\F_A(f)(\omega)-\frac{2\pi i}{b}\F_A(tf(t))(\omega),$\\ where $\Omega=bq-dp.$
\end{itemize}
\end{proposition}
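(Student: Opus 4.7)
The plan rests on a single intertwining identity: the chirp $C_{a/b}=\lambda_A$ conjugates $B$ to ordinary differentiation. Indeed, differentiating $\lambda_A(t)=e^{\pi i a t^2/b}$ gives $\lambda_A'=\frac{2\pi i a}{b}t\,\lambda_A$, so
\[
(C_{a/b}f)'(t)=\lambda_A'(t)f(t)+\lambda_A(t)f'(t)=\lambda_A(t)\Bigl(f'(t)+\tfrac{2\pi i a}{b}tf(t)\Bigr)=C_{a/b}(Bf)(t).
\]
I would state this as the first step, since both parts of the proposition reduce to calculus with the factorization
\[
\F_A f(\omega)=\frac{\eta_A(\omega)}{\sqrt{|b|}}\,\widehat{C_{a/b}f}\!\left(\frac{\omega-p}{b}\right)
\]
provided by \eqref{eq:SAFT-FT}.

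For (i), I would replace $f$ by $Bf$ in the factorization, use the intertwining to turn $\widehat{C_{a/b}(Bf)}$ into $\widehat{(C_{a/b}f)'}$, and then apply the classical rule $\widehat{g'}(\xi)=2\pi i\xi\,\widehat{g}(\xi)$ at $\xi=(\omega-p)/b$. The scalar $2\pi i(\omega-p)/b$ pulls out, leaving exactly $\F_A f(\omega)$, which yields the stated identity.

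For (ii), I would differentiate the factored form of $\F_A f$ in $\omega$ by the product rule. The derivative of $\eta_A(\omega)=e^{\pi i(d\omega^2+2\Omega\omega)/b}$ contributes $\frac{2\pi i}{b}(d\omega+\Omega)\F_A f(\omega)$, and the chain rule on the Fourier transform argument contributes $\frac{\eta_A(\omega)}{b\sqrt{|b|}}(\widehat{C_{a/b}f})'\!\left(\frac{\omega-p}{b}\right)$. Using $\widehat{tg(t)}(\xi)=-\frac{1}{2\pi i}\widehat{g}'(\xi)$ on $g=C_{a/b}f$, this second term is exactly $-\frac{2\pi i}{b}\F_A(tf(t))(\omega)$. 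Adding $\frac{2\pi i a}{b}\omega\,\F_A f(\omega)$ to obtain $B(\F_A f)$ and combining the $\F_A f(\omega)$ coefficients gives $\frac{2\pi i}{b}(a\omega+d\omega+\Omega)\F_A f(\omega)-\frac{2\pi i}{b}\F_A(tf(t))(\omega)$, as claimed.

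The only point needing care is bookkeeping of the phase factors: one must verify that the $1/b$ from the chain rule, the $\sqrt{|b|}$ normalization, and the constant in $\widehat{tg(t)}$ conspire to produce precisely $-\frac{2\pi i}{b}\F_A(tf(t))$, and that the $\eta_A'/\eta_A$ term contributes the $d\omega+\Omega$ piece so that, together with the $a\omega$ coming from the $\frac{2\pi i a}{b}t$ part of $B$, one recovers the symmetric expression $a\omega+d\omega+\Omega$. Since $f\in S(\R)$, all interchanges of differentiation with the integrals defining $\widehat{\,\cdot\,}$ and $\F_A$ are justified without further comment.
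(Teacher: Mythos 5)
Your proof is correct and takes essentially the same route as the paper: both rest on the factorization \eqref{eq:SAFT-FT} together with the chirp identity — your intertwining $(C_{a/b}f)'=C_{a/b}(Bf)$ is exactly the rearranged form of the Leibniz rule $C_\frac{a}{b}(f')=(C_\frac{a}{b}f)'-\frac{2\pi ia}{b}C_\frac{a}{b}(tf)$ that the paper uses — and part (ii) is the same product/chain-rule computation with $\widehat{g}\,'(\xi)=-2\pi i\,\widehat{tg(t)}(\xi)$. Your organization of (i) is marginally cleaner, applying $B$ directly rather than computing $\F_A(f')$ and then cancelling the $\frac{2\pi ia}{b}\F_A(tf)$ term, but the substance is identical.
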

\begin{proof}
(i) Consider
\begin{align*}
\F_A(f^\prime)(\omega)&=\frac{\eta_A(\omega)}{\sqrt{|b|}}\int_\R e^{\frac{\pi i}{b}(at^2+2pt-2\omega t)}f^\prime(t) dt\\
&=\frac{\eta_A(\omega)}{\sqrt{|b|}}\int_\R e^{-2\pi i\frac{\omega-p}{b}t}C_\frac{a}{b}(f^\prime)(t)dt\\
&= \frac{\eta_A(\omega)}{\sqrt{|b|}}\int_\R e^{-2\pi i\frac{\omega-p}{b}t}\big((C_\frac{a}{b}f)^\prime (t)-\frac{2\pi ia}{b}C_\frac{a}{b}(tf)(t)\big)dt\\
&=\frac{\eta_A(\omega)}{\sqrt{|b|}}\big((C_\frac{a}{b}f)^\prime \big)~\widehat{}~(\frac{\omega-p}{b})-\frac{2\pi ia}{b}\frac{\eta_A(\omega)}{\sqrt{|b|}}\int_\R e^{-2\pi i\frac{\omega -p}{b}t}C_\frac{a}{b}(tf)(t)dt\\
&=\frac{\eta_A(\omega)}{\sqrt{|b|}}2\pi i\frac{\omega -p}{b}(C_\frac{a}{b}f)~\widehat{ }~(\frac{\omega -p}{b})-\frac{2\pi ia}{b}\frac{\eta_A(\omega)}{\sqrt{|b|}}\\
&\times \int_\R e^{\frac{\pi i}{b}(at^2+2pt-2\omega t)}tf(t)dt\\
&= 2\pi i\frac{\omega -p}{b}\F_A(f)(\omega)-\frac{2\pi ia}{b}\F_A(t  f(t))(\omega),
\end{align*}
using $C_\frac{a}{b}(f^\prime)(t)=(C_\frac{a}{b}f)^\prime (t)-\frac{2\pi i a}{b}C_\frac{a}{b}(tf(t))(t),$ where $f^\prime=\frac{df}{dt}.$
Thus, $\F_A(Bf)(\omega)=2\pi i \frac{\omega -p}{b}\F_A(f)(\omega).$

(ii) Consider
\begin{align*}
\frac{d}{d\omega}(\F_Af)(\omega)&=\frac{1}{\sqrt{|b|}}\frac{d\eta_A}{d\omega}(\omega)\int_\R e^{\frac{\pi i}{b}(at^2+2pt-\omega t)}f(t)dt\\
&+ \frac{\eta_A(\omega)}{\sqrt{|b|}}\frac{d}{d\omega}\int_\R e^{\frac{\pi i}{b}(at^2+2pt-2\omega t)}f(t)dt\\
&= \frac{2\pi i}{b}(d\omega +\Omega)\frac{\eta_A(\omega)}{\sqrt{|b|}}\int_\R e^{\frac{\pi i}{b}(at^2+2pt-2\omega t)}f(t)dt\\
&+ \frac{\eta_A(\omega)}{\sqrt{|b|}}\frac{d}{d\omega}((C_\frac{a}{b}f) \ \widehat{} \ (\frac{\omega-p}{b}))\\
&=\frac{2\pi i}{b}(d\omega +\Omega)\F_A(f)(\omega)-\frac{2\pi i}{b}\frac{\eta_A(\omega)}{\sqrt{|b|}}(C_\frac{a}{b}(tf(t)))~\widehat{}~(\frac{\omega-p}{b})\\
&=\frac{2\pi i}{b}(d\omega +\Omega)\F_A(f)(\omega)-\frac{2\pi i}{b}\F_A(tf(t))(\omega).
\end{align*}
Thus $B(\F_Af)(\omega)=\frac{2\pi i}{b}(a\omega +d\omega +\Omega)\F_A(f)(\omega)-\frac{2\pi i}{b}\F_A(tf(t))(\omega).$
\end{proof}

\par It is interesting to note that $B$ commutes with $A$-translations. In fact,

\begin{align*}
\F_A(BT_x^Af)(\omega)&=2\pi i\frac{\omega -p}{b}\F_A(T_x^Af)(\omega)\\
&=2\pi i\frac{\omega -p}{b}e^{\frac{\pi i}{b}(ax^2+2px-2xt)}\F_A(f)(\omega)\\
&=e^{\frac{\pi i}{b}(ax^2+2px-2xt)}\F_A(Bf)(\omega)\\
&=\F_A(T_x^ABf)(\omega).
\end{align*}
Then it follows from uniqueness of the SAFT that  $BT_x^A=T_x^AB.$
\par Consider the heat equation associated with the operator $B^*B$, given by,
\begin{equation}\label{eq:heat}
\frac{\partial u}{\partial t}(x,t)=-B^*Bu(x,t),
\end{equation}
with initial condition $u(x,0)=g(x),\ x\in \R \ \text{and} \ t>0 $. We shall obtain the solution of (\ref{eq:heat}).\\
By (\ref{eq:heat}),
$$
\frac{\partial}{\partial t}\F_Au(\omega,t)=-\big(2\pi \frac{\omega -p}{b}\big)^2\F_Au(\omega, t),
$$
using Proposition \ref{pro:diff operator} (i).
Thus
$$
\F_Au(\omega,t)=\F_Au(\omega,0)e^{-(2\pi\frac{\omega -p}{b})^2t}=\F_A(g)(\omega)e^{-(2\pi\frac{\omega -p}{b})^2t}.
$$
Let $g_t(x)=e^{-\frac{x^2}{4t}}$. Then
$$
\F_A(C_{-\frac{a}{b}}g_t)(\omega)=\frac{\eta_A(\omega)}{\sqrt{|b|}}\widehat{g_t}(\frac{\omega -p}{b})=\eta_A(\omega)\sqrt{\frac{4\pi t}{|b|}}e^{-4\pi^2t(\frac{\omega -p}{b})^2}.
$$
Hence
$$
\F_Au(\omega,t)=\sqrt{\frac{|b|}{4\pi t}}\overline{\eta}_A(\omega)\F_A(g)(\omega)\F_A(C_{-\frac{a}{b}}g_t)(\omega)=\sqrt{\frac{|b|}{4\pi t}}\F_A(g\star_AC_{-\frac{a}{b}}g_t)(\omega).
$$
It follows that $$u(x,t)=\sqrt{\frac{|b|}{4\pi t}}(h_t\star_Ag)(x),$$
where $h_t(x)=C_{-\frac{a}{b}}g_t(x)$ given in terms of $A$-convolution. On the other hand,
\begin{align*}
(C_{-\frac{a}{b}}g_t\star_Ag)(x)&=\frac{1}{\sqrt{|b|}}\int_\R T_y^A(C_{-\frac{a}{b}}g_t)(x)g(y)dy\\
&=\frac{1}{\sqrt{|b|}}\int_\R e^{-\frac{2\pi i a}{b}y(x-y)}e^{-\frac{\pi ia}{b}(x-y)^2}g_t(x-y)g(y)dy.
\end{align*}
Upon simplification we get
$$
u(x,t)=\frac{1}{\sqrt{4\pi t}}\int_\R e^{-\frac{\pi ia}{b}(x^2-y^2)}e^{-\frac{1}{4t}(x-y)^2}g(y)dy.
$$

\par Next we describe the action of the usual (bounded) approximate
identities from $L^1(\R)$ on   $L^r(\R)$ through $A$-convolution.

\begin{theorem}\label{approx identity} Given  $r$ with $1 \leq r < \infty$ and
 $\phi \in L^1(\R)$ with  $\int_\R \phi(x)dx = 1$, then for $\phi_\epsilon(x)=\frac{1}{\epsilon}\phi(x/\epsilon)$ one has
 $$\lim_{\epsilon\to 0}\|f\star_A\phi_\epsilon - f\|_r=0, \quad f \in L^r(\R).$$
\end{theorem}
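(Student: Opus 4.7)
The plan is to reduce the statement to the classical approximate identity theorem on $L^r(\R)$ by means of the chirp-intertwining identity \eqref{eq:A-convolution, convolution}. The chirp multiplier $C_{a/b}$ has modulus one pointwise, hence is an isometry on every $L^r(\R)$. Setting $F := C_{a/b}f$ and $\psi_\epsilon := \tfrac{1}{\sqrt{|b|}}\, C_{a/b}\phi_\epsilon$, we obtain
\begin{equation*}
\|f\star_A\phi_\epsilon - f\|_r \;=\; \bigl\|C_{a/b}(f\star_A\phi_\epsilon) - C_{a/b}f\bigr\|_r \;=\; \bigl\|F \star \psi_\epsilon - F\bigr\|_r,
\end{equation*}
so the task reduces to showing that $\psi_\epsilon$ acts as an approximate identity (in the classical sense) for $F \in L^r(\R)$.

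Next I would compare $\psi_\epsilon$ with the genuine dilation $\tfrac{1}{\sqrt{|b|}}\phi_\epsilon$. Their pointwise difference is $\tfrac{1}{\sqrt{|b|}}(e^{\pi i a x^2/b}-1)\phi_\epsilon(x)$, and after the change of variable $y = x/\epsilon$,
\begin{equation*}
\bigl\|\psi_\epsilon - \tfrac{1}{\sqrt{|b|}}\phi_\epsilon\bigr\|_1 \;=\; \tfrac{1}{\sqrt{|b|}}\int_{\R}\bigl|e^{\pi i a \epsilon^2 y^2/b}-1\bigr|\,|\phi(y)|\, dy \;\xrightarrow[\epsilon \to 0]{}\; 0,
\end{equation*}
by dominated convergence (the integrand vanishes pointwise and is bounded by the integrable majorant $2|\phi|$). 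Classical Young's inequality then gives
\begin{equation*}
\bigl\|F\star\psi_\epsilon - \tfrac{1}{\sqrt{|b|}}\,F\star\phi_\epsilon\bigr\|_r \;\leq\; \|F\|_r\,\bigl\|\psi_\epsilon - \tfrac{1}{\sqrt{|b|}}\phi_\epsilon\bigr\|_1 \;\longrightarrow\; 0.
\end{equation*}

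Finally, the classical approximate identity theorem on $L^r(\R)$, applied to $\phi\in L^1(\R)$ with $\int\phi = 1$, yields $\|F\star\phi_\epsilon - F\|_r \to 0$. Chaining the two convergences (with the appropriate normalization) completes the proof. The main obstacle is handling the chirp factor $e^{\pi i a x^2/b}$ concealed inside $C_{a/b}\phi_\epsilon$, which prevents $\psi_\epsilon$ from literally being a dilation of a fixed $L^1$ function. The key observation is that $\phi_\epsilon$ concentrates on a window of size $\sim\epsilon$, on which the chirp phase $a x^2/b$ is of order $\epsilon^2$; after rescaling, the chirp tends pointwise to $1$, and dominated convergence promotes this to $L^1$-convergence, which Young's inequality then transfers to the required $L^r$-convergence.
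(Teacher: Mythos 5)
Your proposal is correct in its mechanism but takes a genuinely different route from the paper. The paper works entirely on the $A$-convolution side: it first proves strong continuity of $A$-translation on $L^r(\R)$ (Proposition~\ref{A-translation conts}, $\|T_h^Af-f\|_r\to 0$ as $h\to 0$, itself established by splitting off the chirp factor and applying dominated convergence), and then runs the classical approximate-identity argument verbatim, bounding $\|f\star_A\phi_\epsilon-f\|_r\leq \int_\R|\phi(t)|\,\|T_{\epsilon t}^Af-f\|_r\,dt$ via Minkowski's integral inequality and finishing by dominated convergence. You instead conjugate by the chirp, using the identity $C_{a/b}(f\star_Ag)=\tfrac{1}{\sqrt{|b|}}(C_{a/b}f\star C_{a/b}g)$ and the fact that $C_{a/b}$ is an $L^r$-isometry to reduce the statement to the classical approximate-identity theorem, paying for this with an extra $L^1$-perturbation lemma ($\|C_{a/b}\phi_\epsilon-\phi_\epsilon\|_1\to 0$, since the chirp phase is $O(\epsilon^2)$ on the window where $\phi_\epsilon$ concentrates) plus Young's inequality. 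The paper's route has the side benefit of producing the reusable continuity of $t\mapsto T_t^A$; yours quotes the classical theorem as a black box and isolates exactly where the chirp enters. Both ultimately rest on one dominated-convergence step of the same flavor.

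One point must be flagged, because your final sentence (``chaining the two convergences, with the appropriate normalization'') papers over a genuine constant. As you set things up, $\psi_\epsilon=\tfrac{1}{\sqrt{|b|}}C_{a/b}\phi_\epsilon$ has total mass tending to $\tfrac{1}{\sqrt{|b|}}$, so your two limits chain to $F\star\psi_\epsilon\to\tfrac{1}{\sqrt{|b|}}F$, i.e.\ $\|f\star_A\phi_\epsilon-\tfrac{1}{\sqrt{|b|}}f\|_r\to 0$, which is the stated conclusion only when $|b|=1$. No choice of normalization of $\phi$ with $\int_\R\phi=1$ removes this factor. However, this is not really a defect of your argument relative to the paper: the paper's own proof commits the identical slip, writing $(f\star_A\phi_\epsilon)(x)=\int_\R T_t^Af(x)\phi_\epsilon(t)\,dt$ and thereby silently dropping the prefactor $\tfrac{1}{\sqrt{|b|}}$ from its own Definition \eqref{eq:A-convolution}. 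In effect both proofs establish Theorem~\ref{approx identity} for the unnormalized $A$-convolution $\int_\R f(s)T_s^Ag(\cdot)\,ds$; with the prefactor kept, the correct limit is $\tfrac{1}{\sqrt{|b|}}f$. Your computation has the merit of making this discrepancy visible; stating the hypothesis as $\tfrac{1}{\sqrt{|b|}}\int_\R\phi=1$ (or dropping the prefactor from $\star_A$) makes your proof go through verbatim.
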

In order to prove this theorem, we observe the following

\begin{proposition}\label{A-translation conts}
For $f\in L^r(\R)$, $1\leq r <\infty$ one has $\|T_h^Af-f\|_r\to 0$ as $h\to 0.$
\end{proposition}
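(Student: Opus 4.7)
The plan is to reduce the statement to the classical continuity of translation in $L^r(\R)$ by factoring out the chirp. Using the relation \eqref{eq:T_xT_x^A}, namely $C_\frac{a}{b} T_h^A f = e^{\frac{\pi i a}{b}h^2} T_h(C_\frac{a}{b} f)$, one may invert the chirp multiplier (which has modulus $1$) and write
\[
T_h^A f \;=\; e^{\frac{\pi i a}{b}h^2}\, C_{-\frac{a}{b}} T_h\bigl( C_\frac{a}{b} f\bigr).
\]
Because $C_{-a/b}$ acts as pointwise multiplication by a function of absolute value $1$, it is an isometry on $L^r(\R)$ for every $1\leq r <\infty$; this fact was already recorded for the auxiliary functions $\lambda_A,\rho_A,\eta_A$ in the preliminaries, and $C_{-a/b} = \lambda_{-A}$ in the same sense.

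Applying this isometry property and the triangle inequality, I would estimate
\[
\|T_h^A f - f\|_r \;=\; \bigl\| e^{\frac{\pi i a}{b}h^2}\, T_h(C_\frac{a}{b} f) - C_\frac{a}{b} f\bigr\|_r
\;\leq\; \bigl|e^{\frac{\pi i a}{b}h^2}-1\bigr|\,\|C_\frac{a}{b} f\|_r \;+\; \bigl\| T_h(C_\frac{a}{b} f) - C_\frac{a}{b} f\bigr\|_r.
\]
The first term tends to $0$ as $h\to 0$ because the exponent $\frac{\pi a}{b}h^2 \to 0$. The second term tends to $0$ by the classical $L^r$-continuity of ordinary translation, applied to the function $C_{a/b} f \in L^r(\R)$ (which lies in $L^r$ because $|C_{a/b} f|=|f|$).

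There is essentially no serious obstacle: the only thing to notice is that one must not try to handle the factor $e^{-\frac{2\pi i a}{b}h(t-h)}$ directly inside the $L^r$ norm (where pointwise smallness is not uniform in $t$), but instead absorb its $t$-dependent part into the ordinary translation operator via \eqref{eq:T_xT_x^A}. After this algebraic simplification, what remains is a scalar modulus-$1$ factor depending only on $h$, plus a standard translation, and both reduce to routine facts.
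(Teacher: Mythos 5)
Your proof is correct, but it takes a genuinely different route from the paper's. The paper estimates $\|T_h^A f - f\|_r$ directly, splitting the integrand pointwise as $e^{-\frac{2\pi i a}{b}h(t-h)}\bigl(f(t-h)-f(t)\bigr) + f(t)\bigl(e^{-\frac{2\pi i a}{b}h(t-h)}-1\bigr)$; the first piece reduces to $\|T_h f - f\|_r$, while the second --- whose phase still depends on $t$ --- is disposed of by the dominated convergence theorem. You instead conjugate by the chirp via \eqref{eq:T_xT_x^A}, which turns $T_h^A$ into the scalar unimodular constant $e^{\frac{\pi i a}{b}h^2}$ times an ordinary translation of $C_{\frac{a}{b}}f$; since $C_{\frac{a}{b}}$ is an $L^r$-isometry, your error term is $\bigl|e^{\frac{\pi i a}{b}h^2}-1\bigr|\,\|f\|_r$, which is independent of $t$, so no dominated convergence argument is needed at all. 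Your version is in fact slightly cleaner: working at the level of norms the triangle inequality is exact, whereas the paper's pointwise splitting as written uses $|x+y|^r \le |x|^r + |y|^r$, which for $r>1$ requires a harmless constant $2^{r-1}$ (or should be carried out on the norms via Minkowski); your route sidesteps this and reduces everything to the classical strong continuity of translations on $L^r(\R)$. One cosmetic point: $C_{-\frac{a}{b}}$ is multiplication by $\overline{\lambda_A}$ rather than by ``$\lambda_{-A}$'' (the parameter set $-A$ is not defined in the paper), but unimodularity, and hence the isometry property you invoke, holds exactly as you say.
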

\begin{proof}
Consider
\begin{align*}
\|T_h^Af-f\|_r^r&=\int_\R |e^{-\frac{2\pi ia}{b}h(t-h)}f(t-h)-f(t)|^rdt\\
&\leq \int_\R |e^{-\frac{2\pi ia}{b}h(t-h)}f(t-h)-e^{-\frac{2\pi ia}{b}h(t-h)}f(t)|^rdt\\
&+\int_\R |f(t)|^r|e^{-\frac{2\pi ia}{b}h(t-h)}-1|^rdt\\
&=\|T_hf-f\|_r^r+\int_\R |f(t)|^r|e^{-\frac{2\pi ia}{b}h(t-h)}-1|^rdt.
\end{align*}
Since $f\in L^r(\R)$, using dominated convergence theorem, we can show that the second term on the right hand side tends to $0$ as $h\to 0$. Thus the result follows from the fact that $\|T_hf-f\|_r\to 0$ as $h\to 0$.
\end{proof}

\begin{proof}[Proof of Theorem \ref{approx identity}]
Consider
\begin{align*}
\|f\star_A\phi_\epsilon -f\|_r&=\bigg(\int_\R |(f\star_A\phi_\epsilon)(x)-f(x)|^rdx\bigg)^{1/r}\\
&=\bigg(\int_\R|\int_\R T_t^Af(x)\phi_\epsilon(t)dt-f(x)|^rdx\bigg)^{1/r}\\
&= \bigg(\int_\R |\int_\R (T_t^Af(x)-f(x))\phi_\epsilon (t)dt|^rdx\bigg)^{1/r}\\
&\leq \int_\R |\phi_\epsilon (t)| \ \|T_t^Af-f\|_rdt\\
&=\int_\R |\phi(t)| \ \|T_{\epsilon t}^Af-f\|_rdt,
\end{align*}
using Minkowski's integral inequality. Now an application of Proposition \ref{A-translation conts} and Lebesgue dominated convergence theorem gives $\displaystyle \lim_{\epsilon \to 0}\|f\star_A\phi_\epsilon -f\|_r =0.$
\end{proof}

\textcolor{black}{\begin{theorem}(Riemann-Lebesgue lemma)
If $f\in L^1(\R)$ then $\F_A(f)\in C_0(\R)$.
\end{theorem}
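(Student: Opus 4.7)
The plan is to reduce the statement to the classical Riemann-Lebesgue lemma via the factorization
$$\F_A(f)(\omega) = \frac{\eta_A(\omega)}{\sqrt{|b|}}\,\widehat{\rho_A f}(\omega/b)$$
recorded in (\ref{eq:SAFT-FT}). Since $|\rho_A(t)|=1$ for every $t\in\R$, the function $\rho_A f$ lies in $L^1(\R)$ with $\|\rho_A f\|_1 = \|f\|_1$, so the classical Riemann-Lebesgue lemma yields $\widehat{\rho_A f}\in C_0(\R)$.

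First I would observe that the map $\omega\mapsto \omega/b$ is a homeomorphism of $\R$ onto itself (recall $b\neq 0$) which preserves behavior at infinity, so $\omega\mapsto \widehat{\rho_A f}(\omega/b)$ again belongs to $C_0(\R)$. Next I would note that multiplication by the continuous unimodular function $\eta_A$ and the scalar $1/\sqrt{|b|}$ keeps continuity intact, while the estimate
$$|\F_A(f)(\omega)| \;=\; \frac{1}{\sqrt{|b|}}\bigl|\widehat{\rho_A f}(\omega/b)\bigr|$$
shows that the product still tends to $0$ as $|\omega|\to\infty$. Combining these two observations gives $\F_A(f)\in C_0(\R)$, as required.

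There is essentially no genuine obstacle: the entire content of the lemma is already packaged into the factorization of $\F_A$ through the classical Fourier transform, and the auxiliary functions $\rho_A,\eta_A$ are designed precisely so that such transfers of basic Fourier-analytic facts become transparent. The only care needed is to record that each of the three operations involved — pointwise multiplication by a bounded continuous function, precomposition with a nonzero dilation, and rescaling by a constant — is a bounded operator on $C_0(\R)$, which is routine.
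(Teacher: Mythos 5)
Your proposal is correct and follows essentially the same route as the paper: both reduce to the classical Riemann--Lebesgue lemma via the factorization \eqref{eq:SAFT-FT}, using that $|\rho_A|=1$ gives $\rho_A f\in L^1(\R)$, that the dilation $\omega\mapsto\omega/b$ (with $b\neq 0$) preserves continuity and decay at infinity, and that multiplication by the unimodular continuous function $\eta_A$ preserves both. Nothing is missing; your additional remark that each of the three operations is bounded on $C_0(\R)$ is exactly the bookkeeping the paper carries out inline.
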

\begin{proof}
We know that
$$
\F_A(f)(\omega)=\frac{\eta_A(\omega)}{\sqrt{|b|}}(\rho_Af)~ \widehat~ (\omega/b).
$$
Since $f\in L^1(\R)$, $\rho_Af\in L^1(\R)$. Applying classical Riemann-Lebesgue lemma, we obtain $\omega \mapsto (\rho_Af)~\widehat{}~(\omega)$ is continuous. This implies that $\omega\mapsto (\rho_Af)~\widehat{}~(\omega/b)$ is continuous. Further, $\omega \mapsto \eta_A(\omega)$ is continuous, from which it follows that $\F_A(f)$ is continuous. Moreover
$$
|\F_A(f)(\omega)|=\frac{1}{\sqrt{|b|}}|(\rho_Af)~\widehat{}~(\omega/b)|
$$
and it follows from classical Riemann-Lebesgue lemma that $\F_A(f)(\omega)\to  0$ as $|\omega|\to \infty$.
\end{proof}}

\begin{theorem}(Hausdorff-Young)
Let $1\leq r\leq 2$ and $1/r+1/r^\prime=1$. Then $\F_A:L^r(\R)\to L^{r^\prime}(\R)$ is a bounded linear operator \textcolor{black}{with $\|\F_A\|_{L^r\to L^{r^\prime}}\leq \frac{1}{\sqrt{|b|}}\|\F\|^{2/r-1}_{L^1\to L^\infty}$, where $\F$ is the classical Fourier transform.}
\end{theorem}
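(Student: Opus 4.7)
My plan is to reduce the statement to the classical Hausdorff--Young inequality via the factorization
$$\F_A f(\omega) = \frac{\eta_A(\omega)}{\sqrt{|b|}} (\rho_A f)~\widehat{}~(\omega/b)$$
from (2.5), and then to apply the Riesz--Thorin interpolation theorem between the endpoints $r = 1$ and $r = 2$. Because $|\rho_A|$ and $|\eta_A|$ are identically equal to $1$, these chirp factors act as isometries on every $L^p(\R)$, so the only nontrivial scaling comes from the dilation $\omega \mapsto \omega/b$.

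First I would dispose of the case $r = 1$ directly. Since $\rho_A f \in L^1(\R)$ with $\|\rho_A f\|_1 = \|f\|_1$, one has
$$\|\F_A f\|_\infty \leq \frac{1}{\sqrt{|b|}} \|\widehat{\rho_A f}\|_\infty \leq \frac{\|\F\|_{L^1 \to L^\infty}}{\sqrt{|b|}} \|f\|_1,$$
so $\F_A : L^1(\R) \to L^\infty(\R)$ is bounded with operator norm at most $|b|^{-1/2} \|\F\|_{L^1 \to L^\infty}$.

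Next I would treat $r = 2$. Applying the substitution $u = \omega/b$ together with Plancherel's theorem for the classical Fourier transform gives
$$\|\F_A f\|_2^2 = \frac{1}{|b|} \int_\R |\widehat{\rho_A f}(\omega/b)|^2\,d\omega = \|\widehat{\rho_A f}\|_2^2 = \|\rho_A f\|_2^2 = \|f\|_2^2,$$
so $\F_A$ acts as an $L^2$-isometry on the dense subspace $L^1 \cap L^2$.

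Finally, with these two endpoint estimates in hand, the Riesz--Thorin interpolation theorem applied with exponents determined by $1/r = (1-\theta) + \theta/2$, i.e.\ $\theta = 2 - 2/r$ and $1-\theta = 2/r - 1$, produces the boundedness of $\F_A : L^r(\R) \to L^{r'}(\R)$ for every $r \in [1,2]$ and yields the desired dependence on $\|\F\|_{L^1 \to L^\infty}^{2/r - 1}$. I do not foresee any substantive obstacle, because the proof is essentially mechanical once the factorization (2.5) is available; the only point requiring care is the bookkeeping of the $|b|^{-1/2}$ prefactor and its propagation through the interpolation.
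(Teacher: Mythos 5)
Your proposal is correct and follows essentially the same route as the paper's own proof: the factorization \eqref{eq:SAFT-FT} gives the endpoint bound $\|\F_A\|_{L^1\to L^\infty}=\frac{1}{\sqrt{|b|}}\|\F\|_{L^1\to L^\infty}$, unitarity gives $\|\F_A\|_{L^2\to L^2}=1$, and Riesz--Thorin interpolates between the two. One remark on the prefactor you flagged: carried through honestly, the interpolation yields $\|\F_A\|_{L^r\to L^{r^\prime}}\leq \bigl(\tfrac{1}{\sqrt{|b|}}\|\F\|_{L^1\to L^\infty}\bigr)^{2/r-1}$, with the factor $\frac{1}{\sqrt{|b|}}$ also raised to the power $2/r-1$; the constant as printed in the theorem (and in the final line of the paper's proof) omits this exponent on $\frac{1}{\sqrt{|b|}}$, an algebra slip which is visible at $r=2$ with $|b|>1$, where the printed bound $\frac{1}{\sqrt{|b|}}<1$ would contradict the unitarity of $\F_A$ on $L^2(\R)$, so your version of the constant is the correct one.
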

\textcolor{black}{\begin{proof}
Consider
\begin{align*}
\|\F_A\|_{L^1\to L^\infty}= \sup_{\|f\|_1=1}\sup_{\omega\in \R}|\F_A(f)(\omega)|&=\frac{1}{\sqrt{|b|}}\sup_{\|f\|_1=1}\sup_{\omega\in \R} |\F(\rho_Af)(\omega/b)|\\
&= \frac{1}{\sqrt{|b|}} \sup_{\|\rho_Af\|_1=1} \sup_{\omega \in \R}|\F(\rho_Af)(\omega)|\\
&= \frac{1}{\sqrt{|b|}}\|\F\|_{L^1\to L^\infty},
\end{align*}
using \eqref{eq:SAFT-FT}. Further $\|\F_A\|_{L^2\to L^2}=1$. Now applying Riesz-Thorin convexity theorem we get $\F_A:L^r \to L^{r^\prime}$ is bounded for $1\leq r \leq 2$ and $\|\F_A\|_{L^r\to L^{r^\prime}}\leq \|\F_A\|^t_{L^1\to L^\infty}$, where $t$ is given by $1/r=1/2+t/2$. In other words, $\|\F_A\|_{L^r\to L^{r^\prime}}\leq \frac{1}{\sqrt{|b|}}\|\F\|^{2/r-1}_{L^1\to L^\infty}$ for $1\leq r\leq 2$.
\end{proof}}

\begin{remark}
The proof will show that even the more detailed behaviour of the Fourier 
transform as expressed by the corresponding theorem in \cite{ho75} can
be derived. A proof, which in some sense is closer to the spirit of 
Wiener amalgam spaces, can be derived from  Theorem 3.2 of \cite{fe81-1}. 
It implies among others that the SAFT maps
$W(L^p,l^q)(\R)$ boundedly into $W(L^{q'},l^{p'})$, for $1 \leq p,q \leq 2$. 
In this sense local properties of $f$ imply global properties of the transform
and vice versa.  
\end{remark} 

\begin{theorem}(Young)
If $f\in L^r(\R), \ g\in L^s(\R)$ and $1/r+1/s=1+1/t$ for $1\leq r,s,t\leq \infty$, then $$\textcolor{black}{f\star_Ag\in L^t(\R).}$$
\end{theorem}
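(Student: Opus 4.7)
The plan is to reduce the statement to the classical Young's inequality for ordinary convolution by using the compatibility formula between $A$-convolution and classical convolution via the chirp modulation operator, namely equation \eqref{eq:A-convolution, convolution}:
$$
C_{a/b}(f \star_A g) = \frac{1}{\sqrt{|b|}}\bigl(C_{a/b} f \star C_{a/b} g\bigr).
$$
The crucial point, already noted in the paper, is that $C_{a/b}$ is pointwise multiplication by a unimodular function, hence an isometry on every $L^r(\R)$ for $1 \leq r \leq \infty$.

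First I would apply the $L^t$-norm to both sides of the compatibility identity. Using the isometry property of $C_{a/b}$ on $L^t$, the left-hand side equals $\|f \star_A g\|_t$. For the right-hand side, I invoke the classical Young inequality with exponents satisfying $1/r + 1/s = 1 + 1/t$, which gives
$$
\|C_{a/b} f \star C_{a/b} g\|_t \leq \|C_{a/b} f\|_r \, \|C_{a/b} g\|_s.
$$
Applying the isometry property once more on the right-hand side yields $\|C_{a/b} f\|_r = \|f\|_r$ and $\|C_{a/b} g\|_s = \|g\|_s$, so combining everything gives the clean estimate
$$
\|f \star_A g\|_t \leq \frac{1}{\sqrt{|b|}} \|f\|_r \|g\|_s,
$$
which proves $f \star_A g \in L^t(\R)$.

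There is essentially no obstacle here once the compatibility formula \eqref{eq:A-convolution, convolution} is accepted; the only thing worth flagging is the boundary cases $r,s,t \in \{1,\infty\}$, where one should confirm that the defining integral \eqref{eq:A-convolution} makes sense pointwise almost everywhere. This is handled in exactly the same way as for ordinary convolution: by first assuming $f,g \in C_c(\R)$ (where the $A$-convolution integral is absolutely convergent and the identity \eqref{eq:A-convolution, convolution} holds pointwise), applying the bound above on this dense subspace, and then extending $\star_A$ to a bounded bilinear map $L^r \times L^s \to L^t$ by continuity. The definition of $f \star_A g$ for general $f,g$ coincides with the pointwise formula \eqref{eq:A-convolution} almost everywhere by the corresponding fact for classical convolution combined with \eqref{eq:A-convolution, convolution}.
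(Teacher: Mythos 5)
Your proof is correct and is essentially identical to the paper's: both apply the compatibility identity $C_{a/b}(f\star_A g)=\frac{1}{\sqrt{|b|}}(C_{a/b}f\star C_{a/b}g)$, take $L^t$-norms, and use classical Young's inequality together with the fact that $C_{a/b}$ is an isometry on every $L^r(\R)$. Your extra remark on the boundary cases and the density argument via $C_c(\R)$ is a careful addition the paper leaves implicit, but it does not change the route.
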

\begin{proof}
From \eqref{eq:A-convolution, convolution} we have $C_\frac{a}{b}(f\star_Ag)=\frac{1}{\sqrt{|b|}}(C_\frac{a}{b}f\star C_\frac{a}{b}g)$. This leads to $$|f\star_Ag|=\frac{1}{\sqrt{|b|}}|C_\frac{a}{b}f\star C_\frac{a}{b}g|.$$ Hence,
\begin{align*}
\|f\star_Ag\|_t&=\frac{1}{\sqrt{|b|}}\|C_\frac{a}{b}f\star C_\frac{a}{b}g\|_t \leq \frac{1}{\sqrt{|b|}}\|C_\frac{a}{b}f\|_r\|C_\frac{a}{b}g\|_s\leq \frac{1}{\sqrt{|b|}} \|f\|_r \|g\|_s,
\end{align*}
using classical Young's inequality and the fact that $C_\frac{a}{b}$ is an isometry on $L^r(\R)$.
\end{proof}


\begin{theorem}
(i) Any  multiplicative linear functional $h$ on $(L^1(\R),\star_A)$ is
of the form
$$
h(f)=\overline{\eta}_A(\omega_0)\F_A(f)(\omega_0), \ f\in L^1(\R),
$$
for some $\omega_0 \in \R$.\\
\textcolor{black}{(ii) Let $T:L^1(\R)\to C_0(\R)$ be a continuous linear operator satisfying
$$T(f\star_Ag)(\omega)=\overline{\eta}_A(\omega)T(f)(\omega)T(g)(\omega)$$
for all $f,g\in L^1(\R)$. Then there exist $E\subset \R$ and $\phi:\R\to \R$ such that
$$T(f)(\omega)=\bigchi_E(\omega)\F_A(f)(\phi(\omega)), \quad f \in L^1(\R).
$$}
\end{theorem}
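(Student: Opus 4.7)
Both parts of the theorem reduce to the classical counterparts quoted in the introduction — the Gelfand-type characterization of multiplicative functionals on $(L^1(\R),\star)$, and Jaming's theorem on convolution-to-pointwise-product maps — via chirp conjugation by $C_\frac{a}{b}$. The engine is the identity $C_\frac{a}{b}(f\star_A g)=\frac{1}{\sqrt{|b|}}(C_\frac{a}{b}f\star C_\frac{a}{b}g)$ from \eqref{eq:A-convolution, convolution} and the bridge $\F_A(f)(\omega)=\frac{\eta_A(\omega)}{\sqrt{|b|}}\widehat{C_\frac{a}{b}f}\bigl(\frac{\omega-p}{b}\bigr)$ from \eqref{eq:SAFT-FT}, together with the fact that $C_{\pm\frac{a}{b}}$ is an $L^1$-isometry.

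For (i), I associate to $h$ the functional $\tilde h(F):=\sqrt{|b|}\,h(C_{-\frac{a}{b}}F)$ on $L^1(\R)$. Inverting the chirp identity yields $C_{-\frac{a}{b}}(F\star G)=\sqrt{|b|}\,(C_{-\frac{a}{b}}F\star_A C_{-\frac{a}{b}}G)$, and a one-line check converts the multiplicativity of $h$ on $(L^1(\R),\star_A)$ into the multiplicativity of $\tilde h$ on $(L^1(\R),\star)$. The classical theorem then supplies $\xi_0\in\R$ with $\tilde h(F)=\widehat F(\xi_0)$; taking $F=C_\frac{a}{b}f$ and re-expressing $\widehat{C_\frac{a}{b}f}(\xi_0)$ in terms of $\F_A f(b\xi_0+p)$ via \eqref{eq:SAFT-FT}, with $\omega_0:=b\xi_0+p$, gives the claimed identity $h(f)=\overline\eta_A(\omega_0)\F_A(f)(\omega_0)$.

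For (ii), I first record the companion identity $\F_A(f\star_A g)(\omega)=\overline\eta_A(\omega)\F_A(f)(\omega)\F_A(g)(\omega)$, an immediate consequence of \eqref{eq:SAFT-FT} and the classical convolution theorem; this makes transparent why the hypothesis on $T$ carries the factor $\overline\eta_A(\omega)$. I then define
\[
S(F)(\omega):=\sqrt{|b|}\,\overline\eta_A(\omega)\,T(C_{-\frac{a}{b}}F)(\omega),
\]
which is a bounded linear operator $L^1(\R)\to C_0(\R)$ because $\overline\eta_A$ is continuous and unimodular. The normalizing constant and the factor $\overline\eta_A(\omega)$ are engineered precisely so that the chirp-convolution discrepancy from \eqref{eq:A-convolution, convolution} and the $\overline\eta_A(\omega)$ in the hypothesis on $T$ cancel, producing the clean identity $S(F\star G)=S(F)S(G)$, i.e., $S$ is a genuine convolution-to-pointwise-product homomorphism.

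Applying Jaming's theorem furnishes $E_0\subset\R$ and $\phi_0:\R\to\R$ with $S(F)(\omega)=\bigchi_{E_0}(\omega)\widehat F(\phi_0(\omega))$. Back-substituting $F=C_\frac{a}{b}f$ and using $\widehat{C_\frac{a}{b}f}(\xi)=\sqrt{|b|}\,\overline\eta_A(b\xi+p)\,\F_A(f)(b\xi+p)$, then setting $\phi(\omega):=b\phi_0(\omega)+p$ and $E:=E_0$, yields the desired representation of $T$ in terms of $\F_A$. The main bookkeeping step — and essentially the only potential source of trouble — is tracking the unimodular phases $\eta_A,\overline\eta_A$ cleanly through the substitutions; all the real content is imported wholesale from the two classical theorems, and the role of the proof is to set up the correct chirp-twist that exposes them.
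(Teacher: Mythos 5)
Your proposal is correct and follows the paper's own proof essentially verbatim: in (i) you build the same functional $\tilde h(F)=\sqrt{|b|}\,h(C_{-\frac{a}{b}}F)$ and invoke the classical Gelfand characterization, and in (ii) the same twisted operator $S(F)(\omega)=\sqrt{|b|}\,\overline{\eta}_A(\omega)T(C_{-\frac{a}{b}}F)(\omega)$ reduced to Jaming's theorem, the only cosmetic difference being that you apply Jaming verbatim and reparametrize afterwards via $\phi=b\phi_0+p$, where the paper absorbs the affine change of variable into $\phi$ directly. One shared caveat, inherited equally from the paper's final line: after back-substitution the formula actually carries the residual unimodular factor $\eta_A(\omega)\overline{\eta}_A(\phi(\omega))$ relative to the literal claim $\bigchi_E(\omega)\F_A(f)(\phi(\omega))$, so your ``desired representation'' (like the paper's ``proving our assertion'') holds only up to that phase.
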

\begin{proof}
(i) Consider
\begin{align*}
h(f)h(g)=h(f\star_Ag)=h(C_\frac{a}{b}^{-1}C_\frac{a}{b}(f\star_Ag))=\frac{1}{\sqrt{|b|}}h\big(C_\frac{a}{b}^{-1}(C_\frac{a}{b}f\star C_\frac{a}{b}g)\big),
\end{align*}
using (\ref{eq:A-convolution, convolution}). This in turn leads to
$$
h(C_\frac{a}{b}^{-1}f)h(C_\frac{a}{b}^{-1}g)=\frac{1}{\sqrt{|b|}}h\big(C_\frac{a}{b}^{-1}(f\star g)\big).
$$
Let $\tilde{h}(f)=\sqrt{|b|}h(C_\frac{a}{b}^{-1}f)$. Then $\tilde{h}(f\star g)=\tilde{h}(f)\tilde{h}(g)$. In other words, $\tilde{h}$ is a multiplicative linear functional on $(L^1(\R),\star)$. Hence there exists $\omega_0\in \R$ such that $\tilde{h}(f)=\widehat{f}\big(\frac{\omega_0-p}{b}\big)$. (See Chapter VIII, Theorem 2.10 in \cite{katznelson}). Therefore
$$
\sqrt{|b|}h(C_\frac{a}{b}^{-1}f)=\widehat{f}\big(\frac{\omega_0-p}{b}\big),
$$
which  implies the final assertion:
$$
h(f)=\frac{1}{\sqrt{|b|}}(C_\frac{a}{b}f)~\widehat{}~\big(\frac{\omega_0-p}{b}\big).
$$

(ii) As in (i), we can show that
$$
\frac{1}{\sqrt{|b|}} T(C_\frac{a}{b}^{-1}(f\star g))(\omega)=\overline{\eta}_A(\omega)T(C_\frac{a}{b}^{-1}f)(\omega)T(C_\frac{a}{b}^{-1}g)(\omega).
$$
Expressed differently we have for $f,g \in L^1(\R)$:
$$
\sqrt{|b|}\overline{\eta}_A(\omega)T(C_\frac{a}{b}^{-1}(f\star g))(\omega)=\sqrt{|b|}\overline{\eta}_A(\omega)T(C_\frac{a}{b}^{-1}f)(\omega)\sqrt{|b|}\overline{\eta}_A(\omega)T(C_\frac{a}{b}^{-1}g)(\omega).
$$
Let $\tilde{T}(f)(\omega)=\sqrt{|b|}\overline{\eta}_A(\omega)T(C_\frac{a}{b}^{-1} f)(\omega)$, or $\tilde{T}(f\star g)=\tilde{T}(f)\tilde{T}(g)$. It then follows from Theorem 3.1 in \cite{Jaming2010}, that there exist $E\subset \R$ and $\phi:\R \to \R$ such that $\tilde{T}(f)(\omega)=\bigchi_E(\omega)\widehat{f}(\frac{\phi(\omega)-p}{b})$. Hence
$$
T(f)(\omega)=\bigchi_E(\omega)\frac{\eta_A(\omega)}{\sqrt{|b|}}(C_\frac{a}{b}f)~\widehat{}~(\frac{\phi(\omega)-p}{b}),
$$
proving our assertion.
\end{proof}

\par Now we shall define $A$-convolution of a tempered distribution and a Schwartz class function and establish a relation with corresponding classical convolution. In order to do so, first, we extend the definition of $C_s$ to the space of tempered distributions.

\begin{definition}
Let $\Lambda\in S^\prime (\R)$. Then $C_s$ is defined on $S^\prime (\R)$ as
$
C_s\Lambda(\phi)=\Lambda(C_s \phi), \ \phi \in S(\R).
$
\end{definition}
\par Observe that $C_s\Lambda \in S^\prime (\R)$, whenever $\Lambda \in S^\prime (\R).$ In order to define $A$-convolution between a tempered distribution and a Schwartz class function, we observe the following. For $f\in L^r(\R)$ and $\phi \in S(\R)$, consider
\begin{align*}
(f\star_A \phi )(x)&=\frac{1}{\sqrt{|b|}}\int_\R f(y)T_y^A\phi(x)dy\\
&= \frac{1}{\sqrt{|b|}}\int_\R f(y)e^{\frac{2\pi ia}{b}y(y-x)}\textcolor{black}{\phi^\checkmark }(y-x)dy\\
&=\frac{1}{\sqrt{|b|}}\int_\R f(y)e^{\frac{2\pi ia}{b}y^2}e^{-\frac{2\pi ia}{b}x^2}e^{-\frac{2\pi ia}{b}x(y-x)}\textcolor{black}{\phi^\checkmark }(y-x)dy\\
&=\frac{e^{-\frac{2\pi ia}{b}x^2}}{\sqrt{|b|}}\int_\R f(y)e^{\frac{2\pi ia}{b}y^2}T_x^A\textcolor{black}{\phi^\checkmark }(y)dy\\
&=\frac{e^{-\frac{2\pi ia}{b}x^2}}{\sqrt{|b|}}\int_\R f(y)C_\frac{2a}{b}T_x^A\textcolor{black}{\phi^\checkmark}  (y)dy.
\end{align*}

\par Thus we give the following
\begin{definition}
For $\phi \in S(\R)$ and $\Lambda \in S^\prime (\R)$, $\Lambda \star_A \phi$ is defined as
$$
(\Lambda \star_A \phi)(x)=  {e^{-\frac{2\pi ia}{b}x^2}}\frac{1}{\sqrt{|b|}} \Lambda (C_\frac{2a}{b}T_x^A\textcolor{black}{\phi^\checkmark}).
$$
\end{definition}

\par Now we establish a relation between $A$-convolution and the corresponding classical convolution of a Schwartz class function with a tempered distribution.

\begin{proposition}\label{Pro:tempered chirp}
For $\Lambda \in S^\prime (\R)$ we get
\begin{equation}
C_\frac{a}{b}(\Lambda \star_A \phi)=\frac{1}{\sqrt{|b|}}(C_\frac{a}{b}\Lambda \star C_\frac{a}{b} \phi), \quad \phi \in S(\R).
\end{equation}
\end{proposition}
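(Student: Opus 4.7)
The plan is to unpack both sides of the asserted identity by applying the relevant definitions, reducing matters to a pointwise identity between two Schwartz test functions on which $\Lambda$ is then evaluated. The argument is formal once the chirp phases are accounted for correctly; there is no deeper analytic content.

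Evaluating the left-hand side at a point $x \in \R$: since $\Lambda \star_A \phi$ is already defined pointwise, the operator $C_\frac{a}{b}$ acts on it as multiplication by a chirp, so
$$
C_\frac{a}{b}(\Lambda \star_A \phi)(x) = e^{\frac{\pi i a}{b}x^2}\,\frac{e^{-\frac{2\pi i a}{b}x^2}}{\sqrt{|b|}}\,\Lambda\bigl(C_\frac{2a}{b}T_x^A \phi^\checkmark\bigr) = \frac{e^{-\frac{\pi i a}{b}x^2}}{\sqrt{|b|}}\,\Lambda\bigl(C_\frac{2a}{b}T_x^A \phi^\checkmark\bigr).
$$
For the right-hand side, Definition \ref{S_S' convolution} applied to $C_\frac{a}{b}\Lambda \in S^\prime(\R)$ and $C_\frac{a}{b}\phi \in S(\R)$, combined with the definition of $C_\frac{a}{b}$ on tempered distributions, yields
$$
\frac{1}{\sqrt{|b|}}(C_\frac{a}{b}\Lambda \star C_\frac{a}{b}\phi)(x) = \frac{1}{\sqrt{|b|}}\,(C_\frac{a}{b}\Lambda)\bigl(T_x(C_\frac{a}{b}\phi)^\checkmark\bigr) = \frac{1}{\sqrt{|b|}}\,\Lambda\bigl(C_\frac{a}{b}T_x(C_\frac{a}{b}\phi)^\checkmark\bigr).
$$

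Comparing, the proposition reduces to the pointwise identity of Schwartz functions
$$
e^{-\frac{\pi i a}{b}x^2}\,C_\frac{2a}{b}T_x^A \phi^\checkmark = C_\frac{a}{b}T_x(C_\frac{a}{b}\phi)^\checkmark.
$$
I would verify this by evaluating both sides at an arbitrary $t \in \R$. Using $T_x^A \phi^\checkmark(t) = e^{-\frac{2\pi i a}{b}x(t-x)}\phi(x-t)$ on the left and $(C_\frac{a}{b}\phi)^\checkmark(t) = e^{\frac{\pi i a}{b}t^2}\phi(-t)$ on the right, both sides collapse to $e^{\frac{\pi i a}{b}(x^2 + 2t^2 - 2xt)}\phi(x-t)$, via the elementary identity $(t-x)^2 + t^2 = x^2 + 2t^2 - 2xt$.

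The main obstacle is a careful bookkeeping of exponential factors: one must track how the prefactor $e^{-\frac{2\pi i a}{b}x^2}$ built into the definition of $\Lambda \star_A \phi$ combines with $C_\frac{a}{b}$ to produce the ``halved'' phase $e^{-\frac{\pi i a}{b}x^2}$, matching the asymmetry between $C_\frac{2a}{b}$ on the left and the two separate $C_\frac{a}{b}$ factors on the right. Because everything is tested against $\Lambda$ through its action on Schwartz functions, no density or approximation argument is needed to pass from the $L^r$-identity \eqref{eq:A-convolution, convolution} to its distributional counterpart.
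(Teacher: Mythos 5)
Your proof is correct and takes essentially the same route as the paper's: both arguments reduce the claim to the single test-function identity $e^{-\frac{\pi i a}{b}x^2}\,C_{\frac{2a}{b}}T_x^A \phi^\checkmark = C_{\frac{a}{b}}T_x(C_{\frac{a}{b}}\phi)^\checkmark$, which the paper obtains by factoring $C_{\frac{2a}{b}}=C_{\frac{a}{b}}C_{\frac{a}{b}}$ and invoking \eqref{eq:T_xT_x^A} together with $C_{\frac{a}{b}}(\phi^\checkmark)=(C_{\frac{a}{b}}\phi)^\checkmark$, while you verify the same identity by direct pointwise evaluation. The difference is only one of phase bookkeeping, not of method, and your computation checks out.
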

\begin{proof}
Consider
\begin{align*}
(\Lambda \star_A \phi )(x)= \frac{e^{-\frac{2\pi ia}{b}x^2}}{\sqrt{|b|}} \Lambda (C_\frac{2a}{b}T_x^A\textcolor{black}{\phi^\checkmark})&= \frac{e^{-\frac{2\pi ia}{b}x^2}}{\sqrt{|b|}}\Lambda (C_\frac{a}{b}C_\frac{a}{b}T_x^A\textcolor{black}{\phi^\checkmark}) \\
&=\frac{e^{-\frac{2\pi ia}{b}x^2}}{\sqrt{|b|}} C_\frac{a}{b}\Lambda(e^{\frac{\pi i}{b}ax^2}T_xC_\frac{a}{b}\textcolor{black}{\phi^\checkmark})\\
&= \frac{e^{-\frac{\pi ia}{b}x^2}}{\sqrt{|b|}}(C_\frac{a}{b}\Lambda \star C_\frac{a}{b}\phi)(x),
\end{align*}
using (\ref{eq:T_xT_x^A}) and $C_\frac{a}{b}(\textcolor{black}{\phi^\checkmark})=(C_\frac{a}{b}\textcolor{black}{\phi)^\checkmark}.$ Hence the result follows.
\end{proof}

\begin{remark}\label{rk:S_0 chirp}
Similar to the arguments above, one can extend the definition of $C_s$ on $S_0^\prime(\R)$ and  $A$-convolution of $\phi \in S_0(\R), \ \Lambda\in S_0^\prime (\R)$. Hence an analogue of Proposition \ref{Pro:tempered chirp} can be obtained by replacing $S(\R)$ with $S_0(\R)$.
\end{remark}

\section{Time-frequency analysis and the SAFT}
Recall that a modulation space can also be defined using short time Fourier transform. The short time Fourier transform of $f$ with respect to a window $g$ is defined by
$$
V_gf(x,\omega)=\int_\R f(t)\overline{g(t-x)}e^{-2\pi i\omega t}dt, \  \ (x,\omega)\in \R^2.
$$
Using this, the modulation space $\Msp^{r,s}_m$ is defined to be $\{f:V_gf\in L^{r,s}_m\}$ for a moderate weight $m$. Here the space $L^{r,s}_m$ is defined to be $\{f:\big(\int_\R (\int_\R |f(x,y)|^rm(x,y)^rdx)^\frac{s}{r}\\dy\big)^{1/s}<\infty\}$.
\par Now we intend to look at the short time Fourier transform of the chirp modulation of a function, from which we show that whenever $f\in \Msp^r_m,$ $C_sf\in \Msp^r_{\nu_s},$ where $\nu_s(x,\omega)=m(x,\omega -sx)$.

\begin{proposition}\label{Chirp_STFT}
\textcolor{black}{(i) If $f\in S_0(\R), g\in S_0^\prime (\R)$, then $$V_{(C_sg)}C_sf(x,\omega)=e^{-\pi i sx^2}V_gf(x,\omega-sx).$$}\\
(ii) $f\in \Msp_m^{r}$ if and only if $C_sf\in \Msp_{\nu_s}^{r},$ where $\nu_s(x,\omega)=m(x,\omega-sx).$
\end{proposition}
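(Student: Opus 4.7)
For part (i), my plan is to compute $V_{C_s g}(C_s f)(x,\omega)$ directly from the defining integral of the STFT. The crucial algebraic observation is that $t^2 - (t-x)^2 = 2tx - x^2$, so
\[
C_s f(t)\,\overline{C_s g(t-x)} = e^{\pi i s[t^2 - (t-x)^2]}\, f(t)\,\overline{g(t-x)} = e^{-\pi i s x^2}\, e^{2\pi i s x t}\, f(t)\,\overline{g(t-x)}.
\]
The factor $e^{-\pi i s x^2}$ is independent of $t$ and can be pulled outside the integral, while $e^{2\pi i s x t}$ combines with the Fourier kernel $e^{-2\pi i \omega t}$ to give $e^{-2\pi i t(\omega - sx)}$. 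What remains in the integrand is precisely the defining expression for $V_g f(x, \omega - sx)$, yielding the identity of (i).

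For part (ii), I would take absolute values on both sides of (i); since $|e^{-\pi i s x^2}|=1$ this produces $|V_{C_s g}(C_s f)(x,\omega)| = |V_g f(x, \omega - sx)|$. Substituting $\nu_s(x,\omega)=m(x,\omega-sx)$ into the modulation norm then gives
\[
\|C_s f\|_{\Msp^r_{\nu_s}}^r = \iint_{\R^2} |V_g f(x, \omega - sx)|^r\, m(x, \omega - sx)^r\, dx\, d\omega.
\]
For each fixed $x$, the substitution $\omega \mapsto \omega + sx$ has unit Jacobian, and after applying it the right-hand side becomes exactly $\|f\|_{\Msp^r_m}^r$. Hence both norms agree for the matched window pair $(g, C_s g)$, and in particular $f \in \Msp^r_m$ if and only if $C_s f \in \Msp^r_{\nu_s}$.

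The one small point to verify is admissibility of the window pair: fixing $g \in S_0(\R)$ as the window computing $\|f\|_{\Msp^r_m}$, we need $C_s g \in S_0(\R)$, which holds because $S_0(\R)$ is invariant under chirp multiplication (a fact recorded earlier in the paper). Window-independence of the modulation space (up to equivalent norms) then upgrades the equality of norms to equality of the underlying spaces as sets. I do not expect a serious obstacle here; the whole argument is essentially a one-line covariance relation of the STFT under chirp modulation combined with a translation in the frequency variable, and the moderateness of $\nu_s$ (needed to regard $\Msp^r_{\nu_s}$ as a bona fide modulation space) follows from that of $m$ together with the linear shear $(x,\omega)\mapsto (x,\omega-sx)$, which preserves polynomial growth.
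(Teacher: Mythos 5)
Your proof is correct and follows essentially the same route as the paper: the same phase identity, the same unit-Jacobian shear $\omega\mapsto\omega+sx$ in the norm, and the same appeal to window independence to pass from the matched pair $(g,C_sg)$ back to a single window. The one technical caveat concerns part (i): the statement takes $g\in S_0^\prime(\R)$, so the STFT is not given by a Lebesgue integral and your pointwise manipulation of the integrand $C_sf(t)\,\overline{C_sg(t-x)}$ does not literally apply. The paper handles this by carrying out the identical phase algebra inside the duality bracket, writing $V_{(C_sg)}C_sf(x,\omega)=\langle f, C_{-s}M_\omega T_xC_sg\rangle$ and using the operator identity $C_{-s}M_\omega T_xC_s=e^{-\pi isx^2}M_{\omega-sx}T_x$ — which is exactly the content of your identity $t^2-(t-x)^2=2tx-x^2$ applied to the window rather than the integrand. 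Since your computation transfers verbatim to the bracket form, the gap is cosmetic and fixed in one line; your added observations on admissibility of $C_sg$ as a window and on moderateness of $\nu_s$ under the shear are correct and, if anything, slightly more careful than the paper, which leaves both points implicit.
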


\begin{proof}
(i) \textcolor{black}{ Writing short time Fourier transform using the duality relation between $S_0(\R)$ and $S_0^\prime(\R)$ we get
\begin{align*}
V_{(C_sg)}C_sf(x,\omega)=\langle C_sf, M_\omega T_xC_sg\rangle =\langle f, C_{-s}M_\omega T_x C_sg\rangle &=e^{-\pi i sx^2}\langle f, M_{\omega-sx} T_xg\rangle\\
&=e^{-\pi i s x^2}V_gf(x,\omega-sx).
\end{align*} }

(ii) Using (i) we get
\begin{align*}
\|f\|_{\Msp_m^r}^r=\|V_gf\|_{L_m^r}^r&= \int_\R \int_\R |V_gf(x,\omega)|^r m(x,\omega)^rdxd\omega\\
&=\int_\R \int_\R |V_{(C_sg)}C_sf(x,\omega +sx)|^rm(x,\omega)^rdxd\omega\\
&=\int_\R \int_\R |V_{(C_sg)}C_sf(x,\omega)|^rm(x,\omega -sx)^rdxd\omega.
\end{align*}
But there exist $c_1,c_2>0$ such that
\begin{align*}
&c_1\big(\int_\R \int_\R |V_{(C_sg)}C_sf(x,\omega)|^r\nu_s(x,\omega)^rdxd\omega\big)^{1/r}\\
&\leq \big(\int_\R \int_\R |V_gC_sf(x,\omega)|^r\nu_s(x,\omega)^rdxd\omega\big)^{1/r}\\
&\leq c_2\big(\int_\R \int_\R |V_{(C_sg)}C_sf(x,\omega)|^r\nu_s(x,\omega)^rdxd\omega\big)^{1/r}.
\end{align*}

Hence the result follows.
\end{proof}

\begin{remark}
If $m(x,\omega)=1,$ then $\Msp^r$ is invariant under the operator $C_s$.
\end{remark}

\begin{remark}\label{rk:chirp invariance}
The operator $C_s:\Msp^r\to \Msp^r$ is bounded \textcolor{black}{uniformly for $s\in \R$} and bijective. \textcolor{black}{In particular, by choosing
$r =1$,  $C_s$ is a bounded and bijective operator on $S_0(\R)$. In fact, the later case was proved by Feichtinger already in \cite{Feich81}.}
\end{remark}

\textcolor{black}{\begin{proposition}
Let $m$ be a $v$-moderate weight. Then the modulation space $\Msp^{r,s}_m$ is invariant under $A$-time-frequency shifts and
$$
\|M_\omega^AT_x^Af\|_{\Msp_m^{r,s}}\leq v(x,\frac{\omega-ax}{b})\|f\|_{\Msp^{r,s}_m}
$$
\end{proposition}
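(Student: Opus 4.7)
The plan is to reduce $A$-time-frequency shifts to ordinary time-frequency shifts, for which the invariance of $\Msp^{r,s}_m$ with a $v$-moderate weight is already standard. Starting from the definitions in the paper, one expands
$$
T_x^A f = e^{\frac{2\pi i a}{b}x^2}\bigl(M_{-ax/b}\,T_x f\bigr), \qquad M_\omega^A f = e^{\frac{\pi i}{b}(a\omega^2-2p\omega)}\bigl(M_{\omega/b} f\bigr).
$$
Composing and collecting the classical modulations using $M_\nu M_\mu = M_{\nu+\mu}$ gives
$$
M_\omega^A T_x^A f \;=\; c(x,\omega)\, M_{(\omega-ax)/b}\, T_x f,
$$
with a unimodular scalar $c(x,\omega)$. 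Thus the $A$-time-frequency shift agrees in modulus with a classical time-frequency shift at phase $x$ and frequency $(\omega-ax)/b$.

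Next I would apply the standard STFT covariance property $|V_g(M_\nu T_u f)(y,\xi)| = |V_g f(y-u,\xi-\nu)|$ (which is insensitive to the unimodular factor $c(x,\omega)$) to obtain
$$
|V_g(M_\omega^A T_x^A f)(y,\xi)| = \bigl|V_g f\bigl(y-x,\xi-(\omega-ax)/b\bigr)\bigr|.
$$
Substituting this into the $L^{r,s}_m$-norm defining $\Msp^{r,s}_m$ and changing variables $y' = y - x$, $\xi' = \xi - (\omega-ax)/b$ to move the shift from the STFT onto the weight gives
$$
\|M_\omega^A T_x^A f\|_{\Msp^{r,s}_m}^s = \int_\R \Bigl(\int_\R |V_g f(y',\xi')|^r\, m\bigl(y'+x,\xi'+(\omega-ax)/b\bigr)^r\, dy'\Bigr)^{s/r} d\xi'.
$$
Now I would invoke the $v$-moderateness of $m$, namely
$$
m\bigl(y'+x,\xi'+(\omega-ax)/b\bigr) \leq v\bigl(x,(\omega-ax)/b\bigr)\, m(y',\xi'),
$$
pull the scalar factor $v(x,(\omega-ax)/b)$ out of the double integral, and recognise what remains as $\|f\|_{\Msp^{r,s}_m}$.

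The main obstacle is purely bookkeeping: keeping track of the unimodular prefactors coming from $\rho_A$ and from the $T_x^A$ chirp, and applying the classical STFT covariance with the correct offsets. Once the identity $M_\omega^A T_x^A = c(x,\omega)\, M_{(\omega-ax)/b} T_x$ is in hand, the invariance of $\Msp^{r,s}_m$ under classical time-frequency shifts (which is exactly the same moderateness argument) closes the proof, and the specific form of the constant $v(x,(\omega-ax)/b)$ reflects that the $A$-frequency $\omega$ corresponds to the classical frequency $(\omega-ax)/b$.
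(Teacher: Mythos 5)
Your proposal is correct and takes essentially the same route as the paper: both arguments reduce $M_\omega^A T_x^A$ to a unimodular phase times the classical time-frequency shift $T_x M_{(\omega-ax)/b}$, and then apply the standard $v$-moderate invariance of $\Msp^{r,s}_m$. The only difference is presentational: the paper cites Theorem 11.3.5 of Gr\"ochenig's book for that last step, whereas you carry it out explicitly via the STFT covariance relation and a change of variables, which is exactly the proof of the cited result.
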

\begin{proof}
Consider
\begin{align*}
\|M_\omega^AT_x^Af\|_{\Msp^{r,s}_m}&=\|\rho_A(-\omega)M_\frac{\omega}{b}T_xM_{\frac{-ax}{b}}f\|_{\Msp^{r,s}_m}\\
&=\|e^{\frac{2\pi i\omega x}{b}}T_xM_\frac{\omega}{b}M_{-\frac{ax}{b}}f\|_{\Msp^{r,s}_m}\\
&=\|T_xM_{\frac{\omega-ax}{b}}f\|_{\Msp^{r,s}_m}\\
&\leq v(x,\frac{\omega-ax}{b})\|f\|_{\Msp^{r,s}_m},
\end{align*}
using $\|T_xM_\omega f\|_{\Msp^{r,s}_m}\leq v(x,\omega)\|f\|_{\Msp^{r,s}_m}$. (See Theorem 11.3.5 in \cite{grochenig_book}).
\end{proof}}

\par The following result states that the SAFT maps $\Msp^r$ into $\Msp^{r^\prime}$ for $1\leq r \leq 2.$

\begin{theorem}
We have $\F_A(\Msp^{r})\subseteq \Msp^{r^\prime}$ for $1\leq r \leq 2$ and $1/r+1/r^\prime=1$.
\end{theorem}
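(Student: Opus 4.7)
The approach is to reduce the claim to the action of the familiar building blocks (chirp, modulation, dilation, and the classical Fourier transform) on modulation spaces, by exploiting the factorization
$$
\F_A \;=\; C_{d/b}\, M_{(bq-dp)/b}\, D_b\, \F\, C_{a/b}\, M_{p/b}
$$
derived in the proof of Proposition 3.1 ($\F$ denoting the classical Fourier transform).

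Each of the six factors on the right-hand side sends $\Msp^r$ boundedly into itself. For the two chirp multiplications $C_{a/b}$ and $C_{d/b}$ this is Remark \ref{rk:chirp invariance}. The two modulations and the dilation $D_b$ are handled by the standard invariance of modulation spaces under translation, modulation and dilation recalled in the preliminaries. Finally, $\F(\Msp^{p,q}) = \Msp^{q,p}$, so in particular $\F$ is a bounded bijection of $\Msp^r = \Msp^{r,r}$ onto itself (see \cite{grochenig_book}). Composing the six bounded maps shows that $\F_A \colon \Msp^r \to \Msp^r$ is bounded.

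To upgrade the target from $\Msp^r$ to $\Msp^{r'}$, I would invoke the classical embedding $\Msp^{p_1,q_1} \hookrightarrow \Msp^{p_2,q_2}$ whenever $p_1 \leq p_2$ and $q_1 \leq q_2$. Since $1 \leq r \leq 2$ forces $r \leq r'$, this gives $\Msp^r \hookrightarrow \Msp^{r'}$, so that $\F_A(\Msp^r) \subseteq \Msp^r \subseteq \Msp^{r'}$, as required. The argument contains no real obstacle beyond citing the Fourier invariance of unweighted modulation spaces and the monotone inclusion relations among them, both of which are classical. As the remark following the Hausdorff-Young theorem in the excerpt already suggests, a sharper Wiener amalgam version mapping $W(L^r,\ell^r)$ into $W(L^{r'},\ell^{r'})$ is available via Theorem 3.2 of \cite{fe81-1} if one wishes to refine the statement.
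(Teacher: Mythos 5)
Your proof is correct and is essentially the paper's own argument: both use the same factorization of $\F_A$ into chirp multiplications, modulation, translation/dilation and the classical Fourier transform, together with the invariance of unweighted modulation spaces under these elementary operators, the only (cosmetic) difference being that the paper inserts the inclusion $\Msp^{r}\subseteq \Msp^{r^\prime}$ (valid since $1\leq r\leq 2$ gives $r\leq r^\prime$) at the Fourier step, concluding $(C_{a/b}f)^{\wedge}\in \Msp^{r^\prime}$ directly, whereas you apply it at the very end after noting the stronger fact $\F_A(\Msp^r)\subseteq \Msp^r$. One caveat: the general identity $\F(\Msp^{p,q})=\Msp^{q,p}$ that you cite is false for $p\neq q$ (the image of $\Msp^{p,q}$ under $\F$ is the Wiener amalgam space $W(\F L^{p},L^{q})$, with the exponents attached to the opposite time--frequency variables than in $\Msp^{q,p}$), but since you only invoke the diagonal case $\F(\Msp^{r,r})=\Msp^{r,r}$, which does hold, the proof is unaffected.
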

\begin{proof}
We have
$$
\F_A(f)(\omega)= \frac{\eta_A(\omega)}{\sqrt{|b|}}(C_\frac{a}{b}f)~\widehat{ }~(\frac{\omega-p}{b})= \eta_A(\omega)T_pD_b(C_\frac{a}{b}f)~\widehat{}~(\omega).
$$
Let $f\in \Msp^{r}$. Then $(C_{\frac{a}{b}}f)\in \Msp^{r}$, which in turn implies that $(C_\frac{a}{b}f)~\widehat{}\in \Msp^{r^\prime}$. But one can write $\eta_A(\omega)T_pD_b(C_\frac{a}{b}f)~\widehat{}~(\omega)=C_{d/b}M_{q-dp/b}T_pD_b(C_\frac{a}{b}f)~\widehat{}~(\omega)$. Since the modulation space $\Msp^{r^\prime}$ is invariant under translation, modulation, dilation and chirp modulation, it follows that $\F_A(f)\in \Msp^{r^\prime}$.
\end{proof}

\par We write the covariance property for short time Fourier transform using $A$-translation $T_\xi^A$ and $A$-modulation $M_\eta^A.$
\begin{proposition}
We have
\begin{align*}
V_g(T_\xi^AM_\eta^Af)(x, \omega) = \rho_A(-\eta)e^{-2\pi i\xi\omega}V_gf(x-\xi, \omega+\frac{a\xi-\eta}{b}).
\end{align*}
In particular,
\begin{equation}
|Vg(T_\xi^AM_\eta^Af)(x,\omega)|=|V_gf(x-\xi,\omega+\frac{a\xi-\eta}{b})|.
\end{equation}
\end{proposition}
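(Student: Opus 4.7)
The plan is to reduce the $A$-translation and $A$-modulation to a product of a classical translation, a classical modulation, and a unimodular scalar phase, then invoke the standard covariance of the short-time Fourier transform.

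First I would rewrite the two operators. From the definitions,
\[
T_\xi^A f(t) = e^{-\frac{2\pi i a}{b}\xi(t-\xi)} f(t-\xi) = e^{\frac{2\pi i a\xi^{2}}{b}}\, M_{-a\xi/b}\,T_\xi f(t),
\qquad
M_\eta^A = \rho_A(-\eta)\,M_{\eta/b}.
\]
Substituting and using the commutation $T_\xi M_{\eta/b} = e^{-2\pi i\xi\eta/b} M_{\eta/b} T_\xi$ together with $M_{-a\xi/b} M_{\eta/b} = M_{(\eta-a\xi)/b}$, I would obtain
\[
T_\xi^A M_\eta^A f \;=\; e^{\frac{2\pi i a\xi^{2}}{b}}\, e^{-\frac{2\pi i \xi\eta}{b}}\, \rho_A(-\eta)\; M_{(\eta-a\xi)/b}\,T_\xi\, f.
\]
This is the main bookkeeping step; it is routine but care must be taken to keep track of the phase constants.

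Next I would apply the standard STFT covariance, which follows directly from the definition of $V_g$ given in the paper: for any $x',\omega'\in\R$,
\[
V_g(M_{\omega'} T_{x'} f)(x,\omega) = e^{-2\pi i (\omega-\omega') x'}\, V_g f(x-x',\omega-\omega').
\]
Specializing to $x'=\xi$ and $\omega'=(\eta-a\xi)/b$ and multiplying by the scalar prefactor collected above, the four exponential phases $e^{\frac{2\pi i a\xi^{2}}{b}}$, $e^{-\frac{2\pi i\xi\eta}{b}}$, $e^{\frac{2\pi i\xi\eta}{b}}$, $e^{-\frac{2\pi i a\xi^{2}}{b}}$ cancel in pairs, leaving exactly $\rho_A(-\eta)\, e^{-2\pi i \xi\omega}$. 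The shift in the second argument becomes $\omega-(\eta-a\xi)/b = \omega+\frac{a\xi-\eta}{b}$, which matches the claim.

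The modulus statement then falls out immediately: since $|\rho_A(-\eta)|=1$ and $|e^{-2\pi i\xi\omega}|=1$, we get $|V_g(T_\xi^A M_\eta^A f)(x,\omega)| = |V_g f(x-\xi,\omega+\tfrac{a\xi-\eta}{b})|$. The only real obstacle here is the phase algebra in the first step; conceptually the proposition is just the classical STFT covariance dressed up by the chirp factors implicit in the $A$-shifts.
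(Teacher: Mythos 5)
Your proof is correct, and it takes a genuinely different route from the paper's. The paper proves this proposition by brute force: it writes out the STFT integral with the full phase $e^{-\frac{2\pi ia}{b}\xi(t-\xi)}e^{\frac{\pi i}{b}(a\eta^2-2p\eta+2\eta(t-\xi))}$, pulls the $t$-independent exponentials outside, and performs the change of variables $t\mapsto t+\xi$ inside the integral, tracking all phases by hand. You instead factor the $A$-shifts into classical time-frequency shifts plus unimodular constants, namely $T_\xi^A = e^{\frac{2\pi ia\xi^2}{b}}M_{-a\xi/b}T_\xi$ and $M_\eta^A=\rho_A(-\eta)M_{\eta/b}$, use the commutation $T_\xi M_{\eta/b}=e^{-2\pi i\xi\eta/b}M_{\eta/b}T_\xi$ to arrive at $T_\xi^A M_\eta^A f=\rho_A(-\eta)e^{\frac{2\pi ia\xi^2}{b}}e^{-\frac{2\pi i\xi\eta}{b}}M_{(\eta-a\xi)/b}T_\xi f$, and then invoke the standard STFT covariance $V_g(M_{\omega'}T_{x'}f)(x,\omega)=e^{-2\pi i(\omega-\omega')x'}V_gf(x-x',\omega-\omega')$ with $x'=\xi$, $\omega'=(\eta-a\xi)/b$; I verified the phase bookkeeping and the four exponentials do cancel in pairs, leaving exactly $\rho_A(-\eta)e^{-2\pi i\xi\omega}$, with $\omega-(\eta-a\xi)/b=\omega+\frac{a\xi-\eta}{b}$ as claimed. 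What your approach buys is modularity and conceptual clarity: it isolates the change-of-variables computation inside a single well-known lemma and makes transparent that the proposition is classical covariance conjugated by chirp and modulation factors; it is also consonant with the paper's own technique elsewhere (the later proposition on $A$-time-frequency shift invariance of $\Msp^{r,s}_m$ uses precisely such a factorization, $M_\omega^A T_x^A f = e^{\frac{2\pi i\omega x}{b}}\rho_A(-\omega)T_xM_{\frac{\omega-ax}{b}}f$ up to phase). What the paper's direct computation buys is self-containedness — it never needs the covariance formula as a separate ingredient — at the cost of heavier phase-tracking inside the integral.
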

\begin{proof}
Consider
\begin{align*}
V_g(T_\xi^AM_\eta^Af)(x,\omega)&=\int_\R T_\xi^A M_\eta^Af(t)\overline{g(t-x)}e^{-2\pi i\omega t}dt\\
&=\int_\R e^{-\frac{2\pi ia}{b}\xi(t-\xi)}e^{\frac{\pi i}{b}(a\eta^2-2p\eta +2\eta(t-\xi))}f(t-\xi)\overline{g(t-x)}e^{-2\pi i\omega t}dt\\
&= e^{i\frac{2\pi a}{b}\xi^2}\rho_A(-\eta)e^{-\frac{2\pi i}{b}\eta \xi}\\
&\times \int_\R e^{-2\pi i\frac{a}{b}t\xi}e^{\frac{2\pi i}{b}\eta t}f(t-\xi)\overline{g(t-x)}e^{-2\pi it\omega}dt\\
&= e^{i\frac{2\pi a}{b}\xi^2}\rho_A(-\eta)e^{-\frac{2\pi i}{b}\eta \xi}\int_\R f(t)\overline{g(t+\xi-x)}e^{-2\pi i(t+\xi)(\omega + \frac{a\xi -\eta}{b})}dt\\
&=\rho_A(-\eta)e^{-2\pi i\xi \omega}\int_\R f(t)\overline{g(t+\xi-x)}e^{-2\pi it(\omega +\frac{a\xi-\eta}{b})}dt\\
&=\rho_A(-\eta)e^{-2\pi i\xi \omega}V_gf(x-\xi,\omega+\frac{a\xi-\eta}{b}),
\end{align*}
applying change of variables.
\end{proof}

\par It is well known that
\begin{align}\label{F.I.T.F.A.}
V_gf(x,\omega)=e^{-2\pi ix\omega}V_{\widehat{g}}\widehat{f}(\omega , -x),
\end{align}
which is popularly known as fundamental identity of time-frequency analysis. (See $(3.10)$ in \cite{grochenig_book}). We obtain an analogous result using the SAFT.

\begin{theorem}
We have
$$
V_{(\F_Ag)}\F_Af(x,\omega)= \eta_A(-x)\overline{\lambda}_A(dx-b\omega)e^{2\pi ip\frac{dx}{b}}e^{-2\pi i(x+p)\omega} V_gf(dx-b\omega,a\omega-cx).
$$
In particular,
\begin{equation}\label{eq:SAFT_STFT}
|V_{(\F_Ag)}\F_Af(x,\omega)| = |V_gf(dx-b\omega,a\omega-cx)|.
\end{equation}
\end{theorem}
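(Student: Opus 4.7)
The strategy is to reduce the SAFT-STFT to the classical STFT by exploiting the factorization of $\F_A$ through the ordinary Fourier transform, and then applying the known classical fundamental identity together with the chirp-modulation STFT formula.

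First I would substitute the representation $\F_A f(t) = \frac{\eta_A(t)}{\sqrt{|b|}}\,\widehat{C_{a/b}f}\bigl(\frac{t-p}{b}\bigr)$ from \eqref{eq:SAFT-FT} into the definition
\[
V_{\F_A g}\F_A f(x,\omega) = \int_\R \F_A f(t)\,\overline{\F_A g(t-x)}\,e^{-2\pi i\omega t}\,dt.
\]
A direct expansion of the unimodular product $\eta_A(t)\overline{\eta_A(t-x)}$, using $\eta_A(\omega) = e^{\pi i(d\omega^2+2\Omega\omega)/b}$, shows that the quadratic-in-$t$ contributions cancel, leaving only a linear factor $e^{2\pi i dtx/b}$ and a factor depending solely on $x$ and the parameters $d,p,q,\Omega,b$.

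Next I would perform the change of variable $s = (t-p)/b$, which absorbs $1/|b|$ against the Jacobian and rewrites the remaining integral, up to a unimodular phase, as $V_{\widehat{C_{a/b}g}}\widehat{C_{a/b}f}\bigl(x/b,\,b\omega-dx\bigr)$. Then I invoke the classical fundamental identity of time-frequency analysis \eqref{F.I.T.F.A.} in the equivalent form $V_{\widehat{g}}\widehat{f}(u,v) = e^{-2\pi iuv}V_g f(-v,u)$, producing $V_{C_{a/b}g}C_{a/b}f(dx-b\omega,\,x/b)$ multiplied by another quadratic phase in $x,\omega$.

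Finally I apply Proposition~\ref{Chirp_STFT}~(i) with $s=a/b$, $y=dx-b\omega$, $\eta=x/b$ to remove the chirp factor $C_{a/b}$, yielding $V_g f\bigl(dx-b\omega,\, x/b-(a/b)(dx-b\omega)\bigr)$ times $e^{-\pi i(a/b)(dx-b\omega)^2}$. The crucial algebraic step is that the symplectic constraint $ad-bc=1$ collapses the second argument: $x/b - (a/b)(dx-b\omega) = x(1-ad)/b + a\omega = -cx + a\omega$, exactly the shift predicted by the inverse matrix $\bigl(\begin{smallmatrix}d & -b\\ -c & a\end{smallmatrix}\bigr)$ acting on $(x,\omega)$.

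Collecting the three exponential prefactors accumulated above and regrouping them as $\eta_A(-x)\,\overline{\lambda}_A(dx-b\omega)\,e^{2\pi ipdx/b}\,e^{-2\pi i(x+p)\omega}$ gives the claimed identity; the modulus version \eqref{eq:SAFT_STFT} is then immediate since every intervening prefactor has absolute value one. I expect the main obstacle to be purely the bookkeeping of quadratic and linear phases involving $p$, $q$, $d$, $\Omega=bq-dp$ and the argument change $ad-bc=1$, and verifying that the accumulated prefactor matches the compact expression in the statement.
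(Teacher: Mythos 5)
Your proposal is correct and takes essentially the same route as the paper's own proof: both factor $\F_A$ as multiplication by $\eta_A$ composed with $T_pD_b$ and the Fourier transform of the chirped function, telescope the quadratic $\eta_A$-phases into a single unimodular factor times $e^{2\pi i dtx/b}$, reduce to $V_{\widehat{C_{a/b}g}}\,\widehat{C_{a/b}f}(x/b,\,b\omega-dx)$, invoke the classical identity \eqref{F.I.T.F.A.} followed by Proposition \ref{Chirp_STFT}(i) with $s=a/b$, and collapse the frequency argument via $ad-bc=1$. The only cosmetic difference is that you carry out the $T_p$ and $D_b$ steps by an explicit change of variables where the paper quotes the covariance formulas \eqref{T_y STFT} and \eqref{D_y STFT}; since every accumulated prefactor is unimodular, \eqref{eq:SAFT_STFT} follows exactly as in the paper.
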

\begin{proof}
First we observe that
\begin{align}\label{T_y STFT}
V_{(T_yg)}T_yf(x,\omega)=e^{-2\pi iy\omega}V_gf(x,\omega),\\
\label{D_y STFT} V_{(D_yg)}D_yf(x,\omega)=V_gf(x/y,y\omega).
\end{align}
Now consider
\begin{align*}
V_{(\F_Ag)}\F_Af(x,\omega)&=\int_\R \eta_A(t)\overline{\eta}_A(t-x)T_pD_b(C_\frac{a}{b}f)~\widehat{} \ (t)\overline{T_pD_b(C_\frac{a}{b}g)~\widehat{}~(t-x)}e^{-2\pi it\omega}dt\\
&=\overline{\eta}_A(-x)\int_\R T_pD_b(C_\frac{a}{b}f)~\widehat{} \ (t)\overline{T_pD_b(C_\frac{a}{b}g)~\widehat{}~(t-x)}e^{-2\pi it(\omega-\frac{dx}{b})}dt\\
&=\overline{\eta}_A(-x)V_{T_pD_b(C_\frac{a}{b}g)~\widehat{}~}T_pD_b(C_\frac{a}{b}f)~\widehat{}~(x,\omega-\frac{dx}{b})\\
&=\overline{\eta}_A(-x)e^{-2\pi ip(\omega-\frac{dx}{b})}V_{D_b(C_\frac{a}{b}g)~\widehat{}~}D_b(C_\frac{a}{b}f)~\widehat{}~(x,\omega-\frac{dx}{b})\\
&=\overline{\eta}_A(-x)e^{-2\pi ip(\omega -\frac{dx}{b})}V_{(C_\frac{a}{b}g)~\widehat{}~}(C_\frac{a}{b}f)~\widehat{}~(\frac{x}{b},b\omega -dx),\\
\end{align*}
using (\ref{T_y STFT}), (\ref{D_y STFT}). Then appealing to (\ref{F.I.T.F.A.}) we get
\begin{align*}
V_{(\F_Ag)}\F_Af(x,\omega)&=\overline{\eta}_A(-x)e^{-2\pi ip(\omega -\frac{dx}{b})}e^{-2\pi i\frac{x}{b}(b\omega -dx)}V_{C_\frac{a}{b}g}C_\frac{a}{b}f(dx-b\omega,\frac{x}{b})\\
&=\eta_A(-x)\overline{\lambda}_A(dx-b\omega)e^{2\pi ip\frac{dx}{b}}e^{-2\pi i(x+p)\omega}\\
&\times V_gf(dx-b\omega,\frac{x}{b}-\frac{a}{b}(dx-b\omega)),
\end{align*}
using Proposition \ref{Chirp_STFT}. But
$$
V_gf(dx-b\omega,-\frac{a}{b}(dx-b\omega))=V_gf(dx-b\omega,a\omega -cx),
$$
using $ad-bc=1$, from which the result follows.
\end{proof}

\begin{theorem}
Let $f\in \Msp_{v_\ell}^r$. Then $\F_Af\in \Msp_{w_\ell}^r$,
where $v_\ell=(1+x^2+\omega^2)^\frac{\ell}{2}$ and $w_\ell(x,\omega)=[1+(c^2+d^2)x^2+(a^2+b^2)\omega^2-
2(ac+bd)x\omega]^\frac{\ell}{2}.$
\end{theorem}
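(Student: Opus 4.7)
The plan is to reduce the claim to a change-of-variables computation, using the identity $|V_{(\F_A g)}\F_A f(x,\omega)|=|V_g f(dx-b\omega,\,a\omega-cx)|$ established in \eqref{eq:SAFT_STFT}. Since a window for the STFT can be replaced by any other nonzero window in $S(\R)$ without changing the modulation space (only the norm, up to equivalence), and since $\F_A$ is a bijection on $S(\R)$, we are free to compute $\|\F_A f\|_{\Msp^r_{w_\ell}}$ using the window $\F_A g$ for some fixed $g\in S(\R)$.

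With that choice,
\begin{align*}
\|\F_A f\|_{\Msp^r_{w_\ell}}^r
&=\int_\R\!\!\int_\R |V_{(\F_A g)}\F_A f(x,\omega)|^r w_\ell(x,\omega)^r\,dx\,d\omega \\
&=\int_\R\!\!\int_\R |V_g f(dx-b\omega,\,a\omega-cx)|^r w_\ell(x,\omega)^r\,dx\,d\omega.
\end{align*}
Next I perform the linear substitution $u=dx-b\omega$, $v=a\omega-cx$, whose matrix $\bigl(\begin{smallmatrix} d & -b \\ -c & a\end{smallmatrix}\bigr)$ has determinant $ad-bc=1$, so the change of variables is measure-preserving. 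Its inverse is $x=au+bv$, $\omega=cu+dv$.

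The heart of the argument, and the step I expect to do most carefully, is a direct algebraic verification that
\[
1+(c^2+d^2)x^2+(a^2+b^2)\omega^2-2(ac+bd)x\omega
\;=\;1+u^2+v^2
\]
under this substitution, i.e.\ that the quadratic form defining $w_\ell$ pulls back to the standard form defining $v_\ell$. Collecting coefficients after substituting $x=au+bv$, $\omega=cu+dv$, the coefficients of $u^2$ and $v^2$ both simplify to $(ad-bc)^2=1$ and the mixed coefficient of $uv$ cancels to $0$; this is where the normalization $ad-bc=1$ is essential. Consequently $w_\ell(x,\omega)=v_\ell(u,v)$ along the substitution.

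Combining these, the integral becomes
\[
\int_\R\!\!\int_\R |V_g f(u,v)|^r v_\ell(u,v)^r\,du\,dv \;=\; \|f\|_{\Msp^r_{v_\ell}}^r,
\]
so $\|\F_A f\|_{\Msp^r_{w_\ell}}\asymp \|f\|_{\Msp^r_{v_\ell}}$ (equality up to the norm equivalence coming from the two windows $g$ and $\F_A g$), which proves $\F_A(\Msp^r_{v_\ell})\subseteq \Msp^r_{w_\ell}$. In particular, applying the same reasoning to the inverse SAFT (whose parameter matrix is the inverse of $\bigl(\begin{smallmatrix}a&b\\c&d\end{smallmatrix}\bigr)$) shows that the inclusion is in fact an equality, so as a consequence $\Msp^r_{v_\ell}$ is invariant under $\F_A$ in the special case that $w_\ell=v_\ell$, as announced in the introduction.
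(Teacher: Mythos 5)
Your proof is correct and takes essentially the same route as the paper: both rest on the identity \eqref{eq:SAFT_STFT} (equivalently $|V_g f(u,v)|=|V_{(\F_A g)}\F_A f(au+bv,\,cu+dv)|$) followed by the determinant-one change of variables $x=au+bv$, $\omega=cu+dv$, under which the quadratic form defining $w_\ell$ pulls back exactly to that defining $v_\ell$. Your explicit verification of the coefficient cancellations (using $ad-bc=1$) and of the window-independence step merely spells out what the paper's terser proof leaves implicit.
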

\begin{proof}
By (\ref{eq:SAFT_STFT}), we have $|V_{(\F_Ag)}\F_Af(x,\omega)|=|V_gf(dx-b\omega,a\omega-cx)|$, which implies that $|V_gf(u,v)|=|V_{(\F_Ag)}\F_Af(au+bv, cu+dv)|$.
Now
\begin{align*}
\|f\|_{\Msp_{v_\ell}^r}^r&=\int_\R\int_\R |V_gf(u,v)|^rv_\ell(u,v)^rdudv\\
&= \int_\R\int_\R |V_{(\F_Ag)}\F_Af(au+bv, cu+dv)|^rv_\ell(u,v)^rdu dv.
\end{align*}
Using again the transformation $x=au+bv,~\omega=cu+dv$, we get $\F_Af\in \Msp_{w_\ell}^r$.
\end{proof}

\textcolor{black}{\begin{corollary}
The modulation space $\Msp^r_{v_\ell}$ is invariant under the special affine Fourier transform $\F_A$.
\end{corollary}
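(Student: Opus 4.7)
The strategy is to show that the weight $w_\ell$ appearing in the preceding theorem is equivalent to $v_\ell$, so that $\Msp^r_{w_\ell} = \Msp^r_{v_\ell}$ with comparable norms; the corollary is then immediate.

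First I would isolate the quadratic form
\begin{equation*}
Q(x,\omega) = (c^2+d^2)x^2 + (a^2+b^2)\omega^2 - 2(ac+bd)x\omega
\end{equation*}
so that $w_\ell(x,\omega) = (1+Q(x,\omega))^{\ell/2}$. The associated symmetric matrix is
\begin{equation*}
\begin{pmatrix} c^2+d^2 & -(ac+bd) \\ -(ac+bd) & a^2+b^2 \end{pmatrix},
\end{equation*}
whose determinant simplifies to $(ad-bc)^2 = 1$ using the SAFT parameter constraint, while its trace $a^2+b^2+c^2+d^2$ is strictly positive. Hence both eigenvalues $\mu_{\min},\mu_{\max}$ are positive, and
\begin{equation*}
\mu_{\min}(x^2+\omega^2) \leq Q(x,\omega) \leq \mu_{\max}(x^2+\omega^2).
\end{equation*}
Combined with elementary inequalities for the term $1$, this yields constants $C_1,C_2>0$ with $C_1 v_\ell(x,\omega) \leq w_\ell(x,\omega) \leq C_2 v_\ell(x,\omega)$ for all $(x,\omega)\in\R^2$.

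Equivalent weights define the same modulation space with equivalent norms, so $\Msp^r_{w_\ell} = \Msp^r_{v_\ell}$. Applying the preceding theorem gives $\F_A(\Msp^r_{v_\ell}) \subseteq \Msp^r_{w_\ell} = \Msp^r_{v_\ell}$. Finally, to obtain genuine invariance (equality) rather than mere inclusion, I would observe that the inverse of $\F_A$ is again a special affine Fourier transform whose parameter matrix also has determinant $1$; applying the same argument to $\F_A^{-1}$ gives the reverse inclusion.

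The only non-routine point is the positive-definiteness computation, and that reduces cleanly to the identity $\det = (ad-bc)^2 = 1$, so no genuine obstacle is expected.
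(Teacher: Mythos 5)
Your proof is correct, and its core coincides with the paper's: both reduce the corollary to showing the weights $v_\ell$ and $w_\ell$ are equivalent, with the constraint $ad-bc=1$ supplying the nondegeneracy. The packaging differs only slightly. The paper observes $w_\ell(x,\omega)^{2/\ell}=1+(dx-b\omega)^2+(a\omega-cx)^2=\bigl\|M(1,x,\omega)^T\bigr\|_2^2$ for an invertible $3\times 3$ matrix $M$ built from the block $\left(\begin{smallmatrix} d & -b\\ -c & a\end{smallmatrix}\right)$, and gets the two-sided bounds from invertibility of $M$; you instead work with the $2\times 2$ symmetric matrix of the quadratic form $Q$, using the Lagrange identity $(a^2+b^2)(c^2+d^2)-(ac+bd)^2=(ad-bc)^2=1$ and positivity of the trace to obtain $\mu_{\min},\mu_{\max}>0$. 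These are the same computation in two guises --- your symmetric matrix is precisely the Gram matrix of the paper's $2\times 2$ block --- so neither route buys anything substantial over the other. Where you genuinely go beyond the paper: its proof establishes only $\F_A(\Msp^r_{v_\ell})\subseteq \Msp^r_{w_\ell}=\Msp^r_{v_\ell}$ and stops there, whereas you also argue the reverse inclusion via $\F_A^{-1}$. That step is sound, with the small caveat that $\F_A^{-1}$ is a SAFT only up to a constant unimodular phase factor (its parameter matrix $\left(\begin{smallmatrix} d & -b\\ -c & a\end{smallmatrix}\right)$ again has determinant one), which is harmless since modulation space membership depends only on $|V_gf|$. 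So if ``invariant'' is read as equality $\F_A(\Msp^r_{v_\ell})=\Msp^r_{v_\ell}$ rather than mere stability under $\F_A$, your version is the more complete of the two.
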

\begin{proof}
It is enough to show that the weights $v_\ell(x,\omega)=(1+x^2+\omega^2)^{\ell/2}$ and $w_\ell(x,\omega)=(1+(c^2+d^2)x^2+(a^2+b^2)\omega^2-2(ac+bd)x\omega)^{\ell/2}$ are equivalent.
For
$$
M=
\begin{pmatrix}
1 & 0 & 0\\
0 & d & -b\\
0 & -c & a
\end{pmatrix}.
$$
one has $det(M)=det(A)=ad-bc=1$. Thus $M$ is invertible and   there exist $c_1, c_2>0$ such that
$$
c_1\Big \|\begin{pmatrix}
1\\
x\\
\omega
\end{pmatrix}\Big\|_2^2\leq \Big \|M\begin{pmatrix}
1\\
x\\
\omega
\end{pmatrix}\Big \|^2_2\leq c_2 \Big \|\begin{pmatrix}
1\\
x\\
\omega
\end{pmatrix}\Big \|^2_2.
$$
In other words,
$$
c_1(1+x^2+\omega^2)\leq (1+(a\omega-cx)^2+(dx-b\omega)^2) \leq c_2(1+x^2+\omega^2).
$$
The equivalence is finally established via the estimate
$$
c_1^{\ell/2}(1+x^2+\omega^2)^{\ell/2}\leq \big(1+(a\omega-cx)^2+(dx-b\omega)^2\big)^{\ell/2} \leq c_2^{\ell/2}(1+x^2+\omega^2)^{\ell/2}.
$$
\end{proof}}



\begin{theorem}
(i) The special affine Fourier transform $\F_A$ is a Gelfand triple automorphism on the Banach Gelfand triple $\big(S_0(\R),L^2(\R),S_0^\prime(\R) \big)$.\\
(ii) The pointwise multiplication operators defined using the auxiliary functions $\lambda_A,\rho_A,\eta_A$ are Gelfand triple automorphisms on $\big(S_0(\R),L^2(\R),S_0^\prime(\R) \big).$
\end{theorem}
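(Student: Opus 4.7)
The plan is to reduce both assertions to Lemma \ref{lemma:gelfand automorphism}, which asserts that any unitary operator on $L^2(\R)$ which also restricts to a bounded bijection of $S_0(\R)$ onto itself automatically extends to a Banach Gelfand triple automorphism of $(S_0(\R),L^2(\R),S_0^\prime(\R))$. Thus in each case I only need to verify (a) unitarity on $L^2(\R)$ and (b) bounded bijectivity on $S_0(\R)$.

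For part (ii), note that $\lambda_A, \rho_A, \eta_A$ are continuous functions of modulus one, so pointwise multiplication by any of them is trivially isometric, hence unitary, on $L^2(\R)$. For the $S_0$-invariance, observe that each auxiliary function decomposes into a chirp and an ordinary modulation: $\lambda_A = C_{a/b}$, $\rho_A = C_{a/b}\,M_{p/b}$ (up to the definition of $M$), and $\eta_A = C_{d/b}\,M_{\Omega/b}$. Since $S_0(\R) = \Msp^1$ is invariant under translation, modulation, dilation (standard facts recalled in the paper), and under the chirp operator $C_s$ by Remark \ref{rk:chirp invariance}, each of these multiplications is a bounded bijection on $S_0(\R)$. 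Applying Lemma \ref{lemma:gelfand automorphism} finishes part (ii).

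For part (i), I would exploit the factorization already recorded in the proof that $\F_A$ is a bijection on $S(\R)$, namely
\[
\F_A = C_{d/b}\, M_{(bq-dp)/b}\, D_b\, \F\, C_{a/b}\, M_{p/b}.
\]
Each factor on the right is unitary on $L^2(\R)$: this is classical for $M_s$, $D_s$, and $\F$, and for $C_s$ it follows from $|C_s f| = |f|$. Therefore $\F_A$ is unitary on $L^2(\R)$. For the $S_0$-level statement, the same factorization together with the invariance of $S_0(\R)$ under translation, modulation, dilation, chirp, and the classical Fourier transform (the last being a central property of the Feichtinger algebra) shows that $\F_A$ maps $S_0(\R)$ into itself boundedly; surjectivity follows because each factor is itself a bijection on $S_0(\R)$, so their composition is too. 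One more application of Lemma \ref{lemma:gelfand automorphism} then upgrades $\F_A$ to a Gelfand triple automorphism of $(S_0(\R),L^2(\R),S_0^\prime(\R))$.

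The main point to be careful about is the chirp step: one must invoke the non-trivial fact (recorded in Remark \ref{rk:chirp invariance}, originally due to Feichtinger \cite{Feich81}) that $C_s$ preserves $S_0(\R)$, since this is the one ingredient that is not immediately obvious from the definition of $\Msp^1$. The remaining verifications are purely mechanical bookkeeping with the factorization.
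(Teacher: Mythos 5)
Your proposal is correct and follows essentially the same route as the paper: both reduce to Lemma \ref{lemma:gelfand automorphism}, verify unitarity on $L^2(\R)$ directly, and obtain bounded bijectivity on $S_0(\R)$ from the same factorization of $\F_A$ into chirp, modulation, translation/dilation, and the classical Fourier transform (your $M_{p/b}$-before-$\F$ version is the same operator identity as the paper's $T_pD_b$-after-$\F$ version), with the chirp invariance of $S_0(\R)$ supplied by Remark \ref{rk:chirp invariance} in both cases. The only cosmetic difference is that the paper verifies the $S_0$-invariance of $T_y$, $M_y$, $D_y$ via explicit STFT covariance identities, whereas you cite these as standard facts, which the paper itself also records.
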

\begin{proof}
Due to Lemma \ref{lemma:gelfand automorphism}, it is enough to prove that the operators are unitary on $L^2(\R)$ and their restrictions on $S_0(\R)$ define bounded and bijective mappings of $S_0(\R)$ onto itself. \\
(i) we know that $\F_A:L^2(\R)\to L^2(\R)$ is a unitary operator. Further, one can write $\F_A(f)(\omega)=C_{d/b}M_{q-dp/b}T_pD_b(C_\frac{a}{b}f)~\widehat{}~(\omega)$. Then using Remark \ref{rk:chirp invariance} it follows that $C_\frac{a}{b}$ is bounded and bijective on $S_0(\R)$. Since the Fourier transform is bounded and bijective on $S_0(\R), \ f\mapsto (C_\frac{a}{b}f)~\widehat{}$ is bounded and bijective on $S_0(\R)$. Further, we can easily show that
\begin{equation}\label{eq:M_y STFT}
V_{(M_yg)}M_yf(x, \omega)=e^{2\pi i xy}V_gf(x,\omega).
\end{equation}
Then using (\ref{T_y STFT}), (\ref{D_y STFT}), (\ref{eq:M_y STFT}) and proceeding as in Proposition \ref{Chirp_STFT} one can show that $T_y,M_y,D_y$ are bounded and bijective on $S_0(\R)$. Hence $\F_A:S_0(\R)\to S_0(\R)$ is bounded and invertible. Thus the result follows from Lemma \ref{lemma:gelfand automorphism}. \\
(ii) Since $|\lambda_A(\omega)|=|\rho_A(\omega)|=|\eta_A(\omega)|=1,$ the pointwise multiplication operators defined by $\lambda_A, \rho_A,\eta_A$ are unitary on $L^2(\R)$. Further,
\begin{align*}
&\lambda_A(x)f(x)=C_\frac{a}{b}f(x)\\
&\rho_A(x)f(x)=C_\frac{a}{b}M_\frac{p}{b}(x)\\
&\eta_A(x)f(x)=C_\frac{d}{b}M_\frac{\Omega}{b}f(x).
\end{align*}
This means that the multiplication operators defined by $\lambda_A,\rho_A,\eta_A$ are composition of $C_s$ and $M_y$. In part (i), we have already shown that $M_y$ is bounded and bijective on $S_0(\R)$. Further, by Remark \ref{rk:chirp invariance}, $C_s$ is bounded and bijective on $S_0(\R)$. Thus our assertion follows from Lemma \ref{lemma:gelfand automorphism}.
\end{proof}


\textcolor{black}{\begin{proposition}
The map $s\mapsto T_s^A$ is a strongly continuous and isometric projective representation on $S_0(\R)$.
\end{proposition}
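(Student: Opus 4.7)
The plan is to reduce everything to the standard time-frequency action on $S_0(\R)$, whose isometric and strongly continuous nature is classical. The key observation is the identity
\begin{equation*}
T_s^Af(t)=e^{-\frac{2\pi ia}{b}s(t-s)}f(t-s)=e^{\frac{2\pi ias^2}{b}}\,M_{-as/b}T_sf(t),
\end{equation*}
obtained by splitting the exponent $-\tfrac{2\pi ia}{b}s(t-s)$ into a $t$-linear part $-\tfrac{2\pi ias}{b}t$ (a modulation factor) and a $t$-independent phase $\tfrac{2\pi ias^2}{b}$. Thus $T_s^A$ is, up to a unimodular scalar, the composition of a translation by $s$ and a modulation by $-as/b$.

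From here the isometry is immediate: the modulation space norm $\|\cdot\|_{S_0}=\|\cdot\|_{\Msp^1}$ is invariant under time-frequency shifts (this is exactly the time-frequency invariance of $S_0(\R)$ recorded earlier and used, e.g., in Remark \ref{rk:chirp invariance}), so
\begin{equation*}
\|T_s^Af\|_{S_0}=\|M_{-as/b}T_sf\|_{S_0}=\|f\|_{S_0},\qquad f\in S_0(\R),\ s\in\R.
\end{equation*}
For the projective representation property I would compute $T_s^AT_r^Af$ directly from the definition: the exponent compared with that of $T_{s+r}^A$ differs by $-\tfrac{2\pi ia}{b}\,sr$, so
\begin{equation*}
T_s^A\,T_r^A=e^{-\frac{2\pi ia}{b}\,sr}\,T_{s+r}^A,
\end{equation*}
exhibiting a cocycle $c(s,r)=e^{-2\pi iasr/b}$ of modulus one; this is precisely the defining identity for a projective (unitary-scalar) representation.

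Strong continuity reduces to strong continuity of translation and modulation on $S_0(\R)$. For $f\in S_0(\R)$ and $s\to s_0$, write
\begin{equation*}
T_s^Af-T_{s_0}^Af=e^{\frac{2\pi ias^2}{b}}M_{-as/b}T_sf-e^{\frac{2\pi ias_0^2}{b}}M_{-as_0/b}T_{s_0}f,
\end{equation*}
add and subtract $e^{\frac{2\pi ias^2}{b}}M_{-as/b}T_{s_0}f$ and $e^{\frac{2\pi ias^2}{b}}M_{-as_0/b}T_{s_0}f$, and use the isometry property together with strong continuity of $s\mapsto T_sg$ and $s\mapsto M_sg$ on $S_0(\R)$ and continuity of the phase $s\mapsto e^{2\pi ias^2/b}$. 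The only non-routine ingredient here is the strong continuity of modulation and translation on $S_0(\R)$, which is a standard fact for the Feichtinger algebra; the rest is bookkeeping. The main (mild) obstacle is keeping the cocycle computation clean and verifying that $|c(s,r)|=1$, which the direct exponent computation settles immediately.
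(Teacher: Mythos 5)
Your proof is correct and follows essentially the same route as the paper: the factorization $T_s^A=e^{2\pi ias^2/b}M_{-as/b}T_s$ (the paper uses the equivalent phase-free form $T_s^A=T_sM_{-as/b}$), the identical cocycle computation $T_s^AT_r^A=e^{-2\pi iasr/b}T_{s+r}^A$, and the reduction of isometry to invariance of the $S_0$-norm under time-frequency shifts. The only divergence is in the continuity step: where you telescope and cite strong continuity of $s\mapsto T_s f$ and $s\mapsto M_s f$ on $S_0(\R)$ as a classical fact (and at a general point $s_0$), the paper proves this inline and only at $s=0$, by computing $V_gT_s^Af(x,\omega)=e^{-2\pi is\omega}V_gf(x-s,\omega+\tfrac{as}{b})$ and combining $L^1(\R^2)$-continuity of translations with dominated convergence --- a self-contained version of exactly the ingredient you invoke.
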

\begin{proof}
It is easy to see that
$$
T_x^A\circ T_y^A=e^{-\frac{2\pi ia}{b}xy}T^A_{x+y}.
$$
Moreover, an easy computation shows that 
\begin{align}\label{eq:strong continuity}
V_gT_sf(x,\omega)&=e^{-2\pi i s\omega}V_gf(x-s,\omega)\\
V_gM_sf(x,\omega)&=V_gf(x,\omega -s)\label{eq:strong continuity2}.
\end{align} 
Thus $$\|T_sf\|_{S_0(\R)}=\|V_gT_sf\|_{L^1(\R^2)}=\|V_gf\|_{L^1(\R^2)}=\|f\|_{S_0(\R)},$$
using \eqref{eq:strong continuity}. Similarly, using \eqref{eq:strong continuity2} we can show that $\|M_s f\|_{S_0(\R)}=\|f\|_{S_0(\R)}.$ Consequently, $\|T_s^Af\|_{S_0(\R)}=\|T_sM_{-\frac{as}{b}}f\|_{S_0(\R)}=\|f\|_{S_0(\R)}$.
Now consider
\begin{align*}
\|T_s^Af-f\|_{S_0(\R)}&=\|V_gT_s^Af-V_gf\|_{L^1(\R^2)}\\
&=\int_\R \int_\R |V_gT_s^Af(x,\omega)-V_gf(x,\omega)|dx d\omega \\
&= \int_\R \int_\R |\int_\R e^{-\frac{2\pi ia}{b}s(t-s)}f(t-s)\overline{g(t-x)}e^{-2\pi i\omega t}dt-V_gf(x,\omega)|dx d\omega\\
&=\int_\R \int_\R |\int_\R e^{-\frac{2\pi ia}{b}st} f(t)\overline{g(t+s-x)}e^{-2\pi i\omega (t+s)}dt -V_gf(x,\omega)|dx d\omega\\
&=\int_\R \int_\R |e^{-2\pi is\omega}V_gf(x-s,\omega+\frac{as}{b})-V_gf(x,\omega)|dxd\omega\\
&\leq \int_\R \int_\R |e^{-2\pi is\omega}V_gf(x-s,\omega+\frac{as}{b})-e^{-2\pi is\omega}V_gf(x,\omega)|dxd\omega \\
&+\int_\R \int_\R |e^{-2\pi is\omega}-1|~ |V_gf(x,\omega)|dxd\omega\\
&=\int_\R \int_\R |T_{(s,-\frac{as}{b})}V_gf(x,\omega)-V_gf(x,\omega)|dxd\omega \\
&+\int_\R \int_\R |e^{-2\pi is\omega}-1|~ |V_gf(x,\omega)|dxd\omega,
\end{align*}
by applying change of variables, where $T_{(h,k)}f(x,y)=f(x-h,y-k)$. Finally, using $\|T_{(h,k)}f-f\|_{L^1(\R^2)}\to 0$ as $\|(h,k)\|\to 0$ and Lebesgue dominated convergence theorem we obtain $\|T_s^Af-f\|_{S_0(\R)} \to 0$ as $s\to 0$. In other words, the projective representation $s\mapsto T_s^A$ is strongly continuous.
\end{proof}}

\section{$A$-modulation spaces}

\par In this section, we wish to define the modulation spaces associated with the SAFT. We follow the classical definition of a modulation space as provided in \cite{FeichtingerLCA}.

\begin{definition}
Let $m(x,\omega)$ be a moderate weight \textcolor{black}{of polynomial growth} on $\R^2$ and $1\leq r,s<\infty$. Then, for $g\in S(\R)$, the modulation space associated with the SAFT, called $A$-modulation space, is defined as follows.
$$
\Msp_{A,m}^{r,s}=\{f\in S^\prime (\R):\|f\|_{\Msp_{A,m}^{r,s}}<\infty\},
$$
where $$\|f\|_{\Msp_{A,m}^{r,s}}=\left(\int_\R\big(\int_\R |f\star_AM_\omega^Ag(x)|^rm(x,\omega)^rdx\big)^\frac{s}{r}d\omega\right)^\frac{1}{s}.$$
\par If $r=s,$ then the modulation space $\Msp^{r,r}_{A,m}$ is denoted by $\Msp^r_{A,m}$. If $m(x,\omega)=1,$ then we write $\Msp^{r,s}_A$, $\Msp^r_A$ for $\Msp^{r,s}_{A,m}$, $\Msp^{r,r}_{A,m}$ and so on.
\end{definition}

\par We can rewrite $f\star_AM_\omega^Ag$ in terms of ordinary convolution using $C_\frac{a}{b}$:
\begin{proposition}
Let $f\in S^\prime(\R)$ and $g\in S(\R)$. Then
\begin{equation}\label{pro:convolution, A-convolution}
|f\star_AM_\omega^Ag(x)|=\frac{1}{\sqrt{|b|}}|C_\frac{a}{b}f\star M_{\frac{\omega}{b}}C_\frac{a}{b}g(x)|.
\end{equation}
\end{proposition}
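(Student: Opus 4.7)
The plan is to reduce the identity to Proposition \ref{Pro:tempered chirp} (the compatibility of the chirp modulation operator $C_{a/b}$ with $A$-convolution for a tempered distribution and a Schwartz function, as extended in Remark \ref{rk:S_0 chirp}), combined with an explicit computation of $C_{a/b}(M_\omega^A g)$. The key observation is that $|C_{a/b}h|=|h|$ pointwise, since $\lambda_A$ is unimodular, so taking absolute values kills all chirp factors.

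First I would apply Proposition \ref{Pro:tempered chirp} to $f\in S'(\R)$ and $M_\omega^A g \in S(\R)$ to get
\[
C_\frac{a}{b}(f\star_A M_\omega^Ag)=\frac{1}{\sqrt{|b|}}\bigl(C_\frac{a}{b}f\star C_\frac{a}{b}M_\omega^Ag\bigr).
\]
Next, using the definition $M_\omega^Ag(t)=e^{\frac{\pi i}{b}(a\omega^2-2p\omega+2\omega t)}g(t)$ and $C_{a/b}g(t)=e^{\frac{\pi i a}{b}t^2}g(t)$, I would compute
\[
C_\frac{a}{b}\bigl(M_\omega^Ag\bigr)(t)=e^{\frac{\pi i}{b}(a\omega^2-2p\omega)}\,e^{\frac{2\pi i\omega}{b}t}\,C_\frac{a}{b}g(t)=e^{\frac{\pi i}{b}(a\omega^2-2p\omega)}\,M_{\omega/b}C_\frac{a}{b}g(t),
\]
so that $C_{a/b}M_\omega^A g$ is a unimodular scalar multiple of $M_{\omega/b}C_{a/b}g$.

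Pulling this unimodular constant out of the convolution (which is legal by linearity in either slot, with the usual pairing sense for $S_0'$ convolved with $S_0$) and taking absolute values on both sides, the phase $e^{\frac{\pi i}{b}(a\omega^2-2p\omega)}$ disappears. Since $|C_{a/b}h(x)|=|h(x)|$, the left-hand side becomes $|f\star_A M_\omega^A g(x)|$, yielding
\[
\bigl|f\star_AM_\omega^Ag(x)\bigr|=\frac{1}{\sqrt{|b|}}\bigl|C_\frac{a}{b}f\star M_{\omega/b}C_\frac{a}{b}g(x)\bigr|,
\]
as claimed. There is no substantive obstacle here; the only subtlety is confirming that Proposition \ref{Pro:tempered chirp} indeed applies to the pair $(f,M_\omega^Ag)$ in the $S'$/$S$ setting (or $S_0'$/$S_0$ setting via Remark \ref{rk:S_0 chirp}), since $M_\omega^Ag\in S(\R)$ whenever $g\in S(\R)$, which is immediate because $M_\omega^A$ is just pointwise multiplication by a smooth function of polynomial growth (in fact unimodular).
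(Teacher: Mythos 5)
Your proof is correct and uses essentially the same ingredients as the paper's, merely in the opposite order: the paper first factors $M_\omega^Ag=\rho_A(-\omega)M_{\omega/b}g$ inside the $A$-convolution (by a direct computation with $T_y^A$) and then applies the chirp compatibility identity, whereas you apply Proposition \ref{Pro:tempered chirp} first and then factor the phase out of $C_\frac{a}{b}M_\omega^Ag$. If anything, your variant is slightly cleaner for $f\in S^\prime(\R)$, since it invokes the distributional compatibility result directly instead of the formal integral $\int_\R f(y)T_y^AM_\omega^Ag(x)\,dy$ that the paper writes for a tempered distribution $f$.
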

\begin{proof}
Consider
\begin{align*}
(f\star_A M_\omega^Ag)(x)&=\int_\R f(y)T_y^AM_\omega^Ag(x)dy\\
&= \int_\R f(y)e^{-\frac{2\pi i a}{b}y(x-y)}M_\omega
^Ag(x-y)dy\\
&=\int_\R f(y) e^{-\frac{2\pi i a}{b}y(x-y)}e^{\frac{\pi i}{b}(a\omega^2-2p\omega+2\omega(x-y))}g(x-y)dy\\
&= \rho_A(-\omega)\int_\R f(y)e^{-\frac{2\pi i a}{b}y(x-y)}M_{\frac{\omega}{b}}g(x-y)dy\\
&=\rho_A(-\omega)\int_\R f(y)T_y^AM_{\frac{\omega}{b}}g(x)dy\\
&=\rho_A(-\omega)(f\star_AM_{\frac{\omega}{b}}g)(x).
\end{align*}
Thus, using (\ref{eq:A-convolution, convolution}) and the fact that $M_{\frac{\omega}{b}}C_\frac{a}{b}=C_\frac{a}{b}M_{\frac{\omega}{b}}$, we get
\begin{align*}
|(f\star_AM_\omega^Ag)(x)|&=|f\star_AM_{\frac{\omega}{b}}g(x)|\\
&=|C_\frac{a}{b}(f\star_AM_{\frac{\omega}{b}}g)(x)|\\
&= \frac{1}{\sqrt{|b|}}|C_\frac{a}{b}f\star M_{\frac{\omega}{b}}C_\frac{a}{b}g(x)|,
\end{align*}
proving our assertion.
\end{proof}


\par Now we give a relation between the new modulation space and the classical modulation space.

\begin{theorem}\label{A modulation}
Let $1\leq r,s<\infty$. Then $f\in \Msp_{A,m}^{r,s}$ if and only if $C_\frac{a}{b}f\in \Msp_{m_b}^{r,s},$ where $m_b(x,\omega)=m(x,b\omega).$
\end{theorem}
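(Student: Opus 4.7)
The strategy is to substitute the pointwise identity from the preceding proposition into the norm defining $\Msp_{A,m}^{r,s}$ and then rescale the frequency variable.

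First I would apply $|f\star_A M_\omega^A g(x)| = |b|^{-1/2}|C_\frac{a}{b}f \star M_{\omega/b}C_\frac{a}{b}g(x)|$ inside the integrand for $\|f\|_{\Msp_{A,m}^{r,s}}$, which pulls out an overall factor $|b|^{-s/2}$. Next, in the outer integral I would make the change of variable $\omega \mapsto b\omega$; this contributes $|b|$ from the Jacobian and converts $m(x,\omega)$ into $m_b(x,\omega)=m(x,b\omega)$. The net result is
\begin{equation*}
\|f\|_{\Msp_{A,m}^{r,s}}^s = |b|^{1-s/2}\int_\R\left(\int_\R |C_\frac{a}{b}f \star M_\omega C_\frac{a}{b}g(x)|^r m_b(x,\omega)^r dx\right)^{s/r} d\omega,
\end{equation*}
which is a positive constant multiple of the $\Msp_{m_b}^{r,s}$-norm of $C_\frac{a}{b}f$, but computed with the window $C_\frac{a}{b}g$ rather than $g$.

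To close the argument I would invoke the standard fact that, for moderate weights of polynomial growth, modulation space norms computed with different nonzero Schwartz windows are equivalent. Since $C_\frac{a}{b}$ preserves $S(\R)$ (because $|C_\frac{a}{b}g|=|g|$ and derivatives of $C_\frac{a}{b}g$ only introduce polynomial factors times $C_\frac{a}{b}g^{(k)}$), the window $C_\frac{a}{b}g$ is admissible. This yields a two-sided equivalence $\|f\|_{\Msp_{A,m}^{r,s}} \asymp \|C_\frac{a}{b}f\|_{\Msp_{m_b}^{r,s}}$, from which the claimed bi-implication follows.

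The main obstacle I anticipate is not the calculation itself, which is routine bookkeeping once the preceding proposition is in hand, but the justification that the weight $m_b$ is admissible and that the window switch is harmless. Since $m_b$ inherits moderateness and polynomial growth from $m$ (with $b$-dependent constants in the moderateness inequality), the classical window-change lemma for modulation spaces applies; this point should be stated explicitly to keep the argument self-contained.
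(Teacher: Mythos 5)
Your proof is correct and follows essentially the same route as the paper: the pointwise identity $|f\star_A M_\omega^A g(x)| = |b|^{-1/2}\,|C_{\frac{a}{b}}f\star M_{\omega/b}C_{\frac{a}{b}}g(x)|$, the substitution $\omega\mapsto b\omega$ yielding the factor $|b|^{1-s/2}$ and the weight $m_b$, and finally the change-of-window equivalence, which is precisely what the paper's unexplained constants $c_1,c_2$ encode. Your explicit verification that $C_{\frac{a}{b}}g$ remains an admissible Schwartz window and that $m_b$ inherits moderateness and polynomial growth from $m$ is a detail the paper leaves implicit, but it does not change the argument.
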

\begin{proof}
Consider
\begin{align*}
\|f\|_{\Msp_{A,m}^{r,s}}^s=&\int_\R \big(\int_\R |f\star_A M_\omega^Ag(x)|^rm(x,\omega)^rdx\big)^\frac{s}{r}d\omega\\
=& \frac{1}{|b|^{s/2}}\int_\R\big(\int_\R |C_\frac{a}{b}f\star M_{\frac{\omega}{b}}C_\frac{a}{b}g(x)|^rm(x,\omega)^rdx\big)^\frac{s}{r}d\omega,
\end{align*}
using (\ref{pro:convolution, A-convolution}). Then
$$\|f\|_{\Msp_{A,m}^{r,s}}^s=|b|^{1-s/2}\int_\R\big(\int_\R|C_\frac{a}{b}f\star M_\omega C_\frac{a}{b}g(x)|^rm(x,b\omega)^rdx\big)^\frac{s}{r}d\omega,$$
applying change of variables.
But
\begin{align*}
&c_1\big(\int_\R\big(\int_\R|C_\frac{a}{b}f\star M_\omega C_\frac{a}{b}g(x)|^rm(x,b\omega)^rdx\big)^\frac{s}{r}d\omega\big)^{1/s}\leq \|f\|_{\Msp_{m_b}^{r,s}}\\&\leq c_2\big(\int_\R\big(\int_\R|C_\frac{a}{b}f\star M_\omega C_\frac{a}{b}g(x)|^rm(x,b\omega)^rdx\big)^\frac{s}{r}d\omega\big)^{1/s},
\end{align*}
for some $c_1,c_2>0$, from which the result follows.
\end{proof}

\begin{corollary}
We have the following inclusion between $A$-modulation spaces.
$$
\Msp^{r_1,s_1}_A\subseteq \Msp^{r_2,s_2}_A, \ \ \text{for} \  r_1\leq r_2, \ s_1\leq s_2.
$$
In particular, $S_0(\R)\subseteq \Msp^{r,s}_A$ for $r,s\geq 1$.
\end{corollary}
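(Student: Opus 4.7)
The plan is to reduce the question to the corresponding inclusion for classical (unweighted) modulation spaces via Theorem \ref{A modulation}, and then invoke the well-known monotonicity of classical modulation spaces in the $(r,s)$ indices.

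First I would apply Theorem \ref{A modulation} in the unweighted case ($m \equiv 1$, hence $m_b \equiv 1$): a tempered distribution $f$ lies in $\Msp^{r,s}_A$ if and only if $C_{a/b}f$ lies in the classical modulation space $\Msp^{r,s}$. So the desired inclusion $\Msp^{r_1,s_1}_A\subseteq \Msp^{r_2,s_2}_A$ is equivalent to $C_{a/b}(\Msp^{r_1,s_1}_A)\subseteq \Msp^{r_2,s_2}$, which in turn follows from the classical inclusion
$$
\Msp^{r_1,s_1}\subseteq \Msp^{r_2,s_2}, \qquad r_1\leq r_2,\; s_1\leq s_2.
$$
This classical fact is standard and can be read off directly from the defining mixed-norm integral: applying Hölder's inequality in the inner variable on a fixed window STFT and then in the outer variable (using that $|V_gf(x,\omega)|$ decays in a controlled way via an auxiliary Schwartz window) yields the monotonic embedding.

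For the second assertion, recall that $S_0(\R)=\Msp^1$. By Remark \ref{rk:chirp invariance}, the chirp operator $C_{a/b}$ is a bounded bijection on $S_0(\R)$, so $C_{a/b}(S_0(\R))=S_0(\R)=\Msp^1\subseteq \Msp^{r,s}$ for all $r,s\geq 1$ by the first part. Translating back through Theorem \ref{A modulation} gives $S_0(\R)\subseteq \Msp^{r,s}_A$.

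The only potentially delicate point is ensuring that the classical monotonicity inclusion with $r_1\le r_2$, $s_1\le s_2$ is stated and used with a fixed Schwartz window (so that the STFT is uniformly controlled by a rapidly decaying envelope and Hölder can be applied cleanly). Since the equivalence of norms for different windows is already baked into the definition, this is routine, and I do not expect any real obstacle beyond quoting the classical result. No new estimate specific to the SAFT is needed; everything is bootstrapped from the classical theory through the conjugation by $C_{a/b}$.
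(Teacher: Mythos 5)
Your proposal is correct and takes essentially the same route as the paper's own proof: both reduce the claim via Theorem \ref{A modulation} (with $m\equiv 1$, so $m_b\equiv 1$) to the classical embedding $\Msp^{r_1,s_1}\subseteq \Msp^{r_2,s_2}$, and the second assertion follows since $S_0(\R)=\Msp^1$ and $C_{\frac{a}{b}}$ preserves $S_0(\R)$. One minor caveat: your parenthetical sketch of the classical embedding via H\"{o}lder alone is not quite the right mechanism (since $L^{r_1}(\R)\not\subseteq L^{r_2}(\R)$, the standard proof dominates $|V_gf|$ by a convolution with a fixed rapidly decaying envelope and applies Young's inequality, as in Theorem 12.2.2 of \cite{grochenig_book}), but as you only quote the standard result, this does not affect the correctness of your argument.
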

\begin{proof}
Let $f\in \Msp^{r_1,s_1}_A$. Then $C_\frac{a}{b}f\in \Msp^{r_1,s_1}$. This implies that $C_\frac{a}{b}f\in \Msp^{r_2,s_2}$, from which it follows that $f\in \Msp^{r_2,s_2}_A.$
\end{proof}

\textcolor{black}{
\begin{corollary}
We have
 $$ \Msp^{r_1,s_1}_A \star_A  \Msp^{r_2,s_2}_A\subseteq \Msp^{r_0,s_0}_A,$$ where $\frac{1}{r_1}+\frac{1}{r_2}=1+\frac{1}{r_0}, \frac{1}{s_1}+\frac{1}{s_2}=\frac{1}{s_0}$.
 \end{corollary}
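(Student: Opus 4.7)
The plan is to transfer the problem to the classical modulation space setting by applying the chirp multiplication operator $C_{a/b}$, use the classical convolution relation for modulation spaces due to Toft (cited as \cite{toft04} in the introduction), and then transfer back.

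First I would take $f_1 \in \Msp^{r_1,s_1}_A$ and $f_2 \in \Msp^{r_2,s_2}_A$. By Theorem \ref{A modulation} (applied with the trivial weight $m\equiv 1$, so that $m_b \equiv 1$ as well), this is equivalent to saying $C_{a/b}f_1 \in \Msp^{r_1,s_1}$ and $C_{a/b}f_2 \in \Msp^{r_2,s_2}$ in the sense of the classical (unweighted) modulation spaces.

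Next I would appeal to the compatibility of $A$-convolution with chirp modulation, namely \eqref{eq:A-convolution, convolution}, which yields
\[
C_{a/b}(f_1 \star_A f_2) \;=\; \tfrac{1}{\sqrt{|b|}}\bigl(C_{a/b}f_1 \star C_{a/b}f_2\bigr).
\]
Now the classical convolution relation for modulation spaces from \cite{toft04} recalled in the introduction,
$$\Msp^{p_1,q_1}\star \Msp^{p_2,q_2}\subset \Msp^{p_0,q_0}, \qquad \tfrac{1}{p_1}+\tfrac{1}{p_2}=1+\tfrac{1}{p_0},\ \tfrac{1}{q_1}+\tfrac{1}{q_2}=\tfrac{1}{q_0},$$
applied with $(p_i,q_i) = (r_i,s_i)$, gives $C_{a/b}f_1 \star C_{a/b}f_2 \in \Msp^{r_0,s_0}$, together with the corresponding norm estimate. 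Hence $C_{a/b}(f_1\star_A f_2) \in \Msp^{r_0,s_0}$.

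Finally, applying Theorem \ref{A modulation} in the reverse direction, this is equivalent to $f_1\star_A f_2 \in \Msp^{r_0,s_0}_A$, proving the desired inclusion. There is essentially no real obstacle here beyond bookkeeping: the nontrivial content is packaged into Theorem \ref{A modulation} (which reduces $A$-modulation spaces to classical ones via $C_{a/b}$), the chirp-convolution identity \eqref{eq:A-convolution, convolution} (which reduces $\star_A$ to $\star$), and Toft's classical convolution theorem. If one wanted a quantitative norm bound, one would simply chain the isometric equivalence constants from Theorem \ref{A modulation} with the factor $1/\sqrt{|b|}$ and Toft's constant.
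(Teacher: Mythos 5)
Your proposal is correct and coincides with the paper's own proof: the paper likewise takes $f\in \Msp^{r_1,s_1}_A$, $g\in \Msp^{r_2,s_2}_A$, passes to $C_{a/b}f\in \Msp^{r_1,s_1}$ and $C_{a/b}g\in \Msp^{r_2,s_2}$ via Theorem \ref{A modulation}, applies the classical convolution relation $\Msp^{r_1,s_1}\star \Msp^{r_2,s_2}\subseteq \Msp^{r_0,s_0}$ from \cite{toft04}, and transfers back using the identity $C_{a/b}f\star C_{a/b}g=\sqrt{|b|}\,C_{a/b}(f\star_A g)$. Your remark about chaining the equivalence constants with the factor $1/\sqrt{|b|}$ for a quantitative bound is a small extra the paper leaves implicit, but the argument is the same.
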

\begin{proof}
Let $f\in \Msp^{r_1,s_1}_A, g\in \Msp^{r_2,s_2}_A$. Then $C_\frac{a}{b}f\in \Msp^{r_1,s_1}$ and $C_\frac{a}{b} g\in \Msp^{r_2,s_2}$. Now using the fact that $\Msp^{r_1,s_1} \star  \Msp^{r_2,s_2}\subseteq \Msp^{r_0,s_0},$ we get $C_\frac{a}{b} f\star C_\frac{a}{b}g\in \Msp^{r_0,s_0}$. But $C_\frac{a}{b}f\star C_\frac{a}{b}g=\sqrt{|b|}C_\frac{a}{b}(f\star_A g)$. Hence $C_\frac{a}{b}(f\star_A g)\in \Msp^{r_0,s_0}$. In other words, $f\star_A g\in \Msp^{r_0,s_0}_A.$
\end{proof}}

\begin{corollary}
If $1\leq r,s <\infty, $ then $\Msp^{r,s}_A\subset S_0^\prime(\R)$.
\end{corollary}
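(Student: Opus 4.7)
The plan is to reduce the statement to the already-known classical embedding $\Msp^{r,s}\subset S_0^\prime(\R)$ for $1\leq r,s<\infty$ via the chirp-conjugation isomorphism that was established in the previous results. The key observation is that the passage from $A$-modulation norms to ordinary modulation norms has been recorded in Theorem \ref{A modulation}, which says that $f\in\Msp^{r,s}_A$ if and only if $C_{a/b}f\in\Msp^{r,s}$ (with weight $1$, since the unweighted case is what we need here).

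First I would recall that $C_{a/b}$ is a bounded bijection on $S_0(\R)$ (this is Remark \ref{rk:chirp invariance}), so by duality $C_{a/b}$ extends to a bounded bijection on $S_0^\prime(\R)$, with inverse $C_{-a/b}$. Next, I would invoke the classical chain of continuous embeddings $\Msp^{r,s}\hookrightarrow\Msp^{\infty,\infty}\hookrightarrow S_0^\prime(\R)$ valid for $1\leq r,s<\infty$, which is standard (for example via the short-time Fourier transform characterization of $\Msp^{r,s}$ with window $g\in S_0(\R)$).

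Putting these together: given $f\in\Msp^{r,s}_A$, Theorem \ref{A modulation} gives $C_{a/b}f\in\Msp^{r,s}\subset S_0^\prime(\R)$, and then applying the $S_0^\prime$-bijection $C_{-a/b}$ yields
\[
f = C_{-a/b}\bigl(C_{a/b}f\bigr)\in S_0^\prime(\R),
\]
which is the desired inclusion. I do not expect any serious obstacle here; the only point requiring care is that the chirp operator $C_s$ is well-defined on $S_0^\prime(\R)$ (since $|\lambda_A|=1$ and $\lambda_A\cdot S_0(\R)\subset S_0(\R)$, the dual action is well-defined and bounded), and that this extension really inverts the $S_0(\R)$-action, both of which are immediate from Remark \ref{rk:chirp invariance} and the definition of the distributional chirp modulation introduced just before Proposition \ref{Pro:tempered chirp}.
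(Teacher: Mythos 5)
Your proposal is correct and follows essentially the same route as the paper: reduce via Theorem \ref{A modulation} to the classical inclusion $\Msp^{r,s}\subset S_0^\prime(\R)$, then pull back with the chirp operator, whose boundedness on $S_0^\prime(\R)$ the paper verifies by exactly the duality estimate $|C_s\Lambda(\phi)|\leq c^\prime\|\Lambda\|\,\|\phi\|_{S_0(\R)}$ that you invoke through Remark \ref{rk:chirp invariance}. No gaps.
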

\begin{proof}
Let $f\in \Msp^{r,s}_A$. Then $C_\frac{a}{b}f\in \Msp^{r,s}$. Now using the fact that $\Msp^{r,s}\subset S_0^\prime(\R)$, we get $C_\frac{a}{b}f\in S_0^\prime(\R)$. Thus it is enough to prove that $S_0^\prime(\R)$ is invariant under the operator $C_s$. Let $\Lambda \in S_0^\prime(\R)$. Then for $\phi \in S(\R)$, we get
$$
|C_s\Lambda(\phi)|=|\Lambda(C_s\phi)|\leq \|\Lambda\|\|C_s\phi\|_{S_0(\R)}\leq c^\prime\|\Lambda\|\|\phi\|_{S_0(\R)}, \ \ \text{for some $c^\prime>0$}.
$$
Hence the result follows.
\end{proof}

\begin{corollary}
The $A$-modulation space $\Msp^r_A$ coincides with the classical modulation space $\Msp^r$ in the sense of equivalent norm.
\end{corollary}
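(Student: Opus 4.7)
The plan is to combine Theorem~\ref{A modulation} (applied with the trivial weight $m\equiv 1$) with the chirp invariance of $\Msp^r$ recorded in Remark~\ref{rk:chirp invariance}. Specializing Theorem~\ref{A modulation} to $r=s$ and $m(x,\omega)\equiv 1$ makes $m_b(x,\omega)\equiv 1$ as well, so the theorem (and the proof of the preceding corollary about the inclusion $\Msp^{r_1,s_1}_A\subseteq\Msp^{r_2,s_2}_A$) delivers the equivalence $f\in \Msp^r_A\iff C_{a/b}f\in \Msp^r$, together with two-sided norm bounds of the form $c_1\|f\|_{\Msp^r_A}\le \|C_{a/b}f\|_{\Msp^r}\le c_2\|f\|_{\Msp^r_A}$.

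Next, I would invoke Remark~\ref{rk:chirp invariance}, which states that $C_s:\Msp^r\to\Msp^r$ is bounded and bijective (uniformly in $s$). Applied to $s=a/b$ this yields constants $c_3,c_4>0$ with
\[
c_3\|f\|_{\Msp^r}\le \|C_{a/b}f\|_{\Msp^r}\le c_4\|f\|_{\Msp^r}, \qquad f\in \Msp^r.
\]
Chaining the two pairs of estimates gives $\|f\|_{\Msp^r_A}\sim \|C_{a/b}f\|_{\Msp^r}\sim \|f\|_{\Msp^r}$, so the set-theoretic identity $\Msp^r_A=\Msp^r$ holds with equivalent norms.

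There is essentially no obstacle here: the argument is a two-line consequence of Theorem~\ref{A modulation} and Remark~\ref{rk:chirp invariance}. The only minor points to verify in passing are that the trivial weight $m\equiv 1$ qualifies as a moderate weight of polynomial growth (immediate) and that the equivalence constants furnished by Theorem~\ref{A modulation} do not blow up in the unweighted case (inspection of its proof shows they depend only on $b$, hence are harmless once $A$ is fixed).
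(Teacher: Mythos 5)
Your proposal is correct and follows essentially the same route as the paper, whose one-line proof likewise combines Theorem~\ref{A modulation} (specialized to $m\equiv 1$, $r=s$) with the chirp invariance of $\Msp^r$ from Proposition~\ref{Chirp_STFT}, of which Remark~\ref{rk:chirp invariance} is the direct consequence you invoke. Your added remarks on the trivial weight and the harmlessness of the constants are fine but not needed beyond what the cited results already provide.
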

\begin{proof}
The proof follows from Proposition \ref{Chirp_STFT} and Theorem \ref{A modulation}.
\end{proof}

\begin{remark} \label{chirp inverse}
If $m(x,\omega)=1$, then the operator $C_\frac{a}{b}:\Msp^{r,s}_A\to \Msp^{r,s}$ is bounded and invertible. The inverse is given by $C_\frac{a}{b}^{-1}=C_{-\frac{a}{b}}.$ Moreover, using the fact that modulation is an isometry on $\Msp^{r,s},$ we get the invertiblity of $M_\frac{p}{b}C_\frac{a}{b}=\rho_A:\Msp^{r,s}_A\to \Msp^{r,s}$.
\end{remark}

As in the case of classical modulation spaces, we obtain the following
\begin{theorem}

\begin{itemize}
\item[(i)] 
 $\Msp^{r,s}_{A,m}$ is a Banach space for $1\leq r,s<\infty $.
\item[(ii)] The definition of modulation spaces is independent of
the choice of window $(0 \neq) g\in S(\R) $: 
different windows define the same space and equivalent norms.
\item[(iii)] $S(\R)$ is dense in $\Msp^{r,s}_A,$ for $1\leq r,s <\infty.$
\end{itemize}
\end{theorem}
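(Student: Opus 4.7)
The plan is to transfer each property from the corresponding classical modulation space via the chirp bijection established in Theorem~\ref{A modulation}: that result shows $f\in \Msp^{r,s}_{A,m}$ if and only if $C_{\frac{a}{b}}f\in \Msp^{r,s}_{m_b}$ with $m_b(x,\omega)=m(x,b\omega)$, and the two norms are equivalent. The key auxiliary observation is that the chirp $e^{\pi i at^2/b}$ is a smooth function all of whose derivatives grow polynomially, hence $C_{\frac{a}{b}}$ is a linear bijection of $S(\R)$ onto itself and of $S^\prime(\R)$ onto itself, with inverse $C_{-\frac{a}{b}}$. Moreover the weight $m_b$ inherits moderateness from $m$: if $m(x_1+x_2,\omega_1+\omega_2)\leq C\,m(x_1,\omega_1)v(x_2,\omega_2)$ for some submultiplicative $v$, then $m_b(x_1+x_2,\omega_1+\omega_2)\leq C\,m_b(x_1,\omega_1)v_b(x_2,\omega_2)$ with $v_b(x,\omega)=v(x,b\omega)$ again submultiplicative and of polynomial growth.

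For (i), completeness of the classical weighted modulation space $\Msp^{r,s}_{m_b}$ is standard (see \cite{grochenig_book}). Given a Cauchy sequence $(f_n)$ in $\Msp^{r,s}_{A,m}$, the sequence $(C_{\frac{a}{b}}f_n)$ is Cauchy in $\Msp^{r,s}_{m_b}$ and hence converges to some $h$; the element $f:=C_{-\frac{a}{b}}h$ then belongs to $\Msp^{r,s}_{A,m}$ and is the required limit of $(f_n)$. That $\|\cdot\|_{\Msp^{r,s}_{A,m}}$ is a genuine norm (including the non-degeneracy part) transfers through the same isomorphism.

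For (ii), let $g_1,g_2\in S(\R)\setminus\{0\}$ be two candidate windows, and denote by $\|\cdot\|^{(i)}$ the $A$-modulation norm computed with $g_i$. The computation in the proof of Theorem~\ref{A modulation} shows that $\|f\|^{(i)}$ is equivalent to the classical modulation norm of $C_{\frac{a}{b}}f$ taken with the window $C_{\frac{a}{b}}g_i\in S(\R)\setminus\{0\}$. Since the classical modulation norm is known to be independent (up to equivalence) of the choice of nonzero Schwartz window, the two $A$-modulation norms are equivalent.

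For (iii), let $f\in \Msp^{r,s}_A$, so that $C_{\frac{a}{b}}f\in \Msp^{r,s}$. By density of $S(\R)$ in the classical modulation space, pick $\phi_n\in S(\R)$ with $\phi_n\to C_{\frac{a}{b}}f$ in $\Msp^{r,s}$, and set $\psi_n:=C_{-\frac{a}{b}}\phi_n\in S(\R)$. Theorem~\ref{A modulation} then yields a constant $c>0$ with $\|\psi_n-f\|_{\Msp^{r,s}_A}\leq c\,\|\phi_n-C_{\frac{a}{b}}f\|_{\Msp^{r,s}}$, which tends to $0$. The only delicate point in the whole argument is the auxiliary observation that $C_{\frac{a}{b}}$ preserves $S(\R)$ and $S^\prime(\R)$; once that is in hand, all three conclusions reduce to the corresponding known facts for ordinary modulation spaces.
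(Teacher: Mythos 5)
Your proposal is correct and follows exactly the route the paper intends: the paper omits the proof, stating that the theorem ``follows from the corresponding properties of the classical modulation spaces,'' and your transfer argument via the chirp bijection $C_{\frac{a}{b}}$ from Theorem~\ref{A modulation} (including the checks that $m_b$ remains moderate and that $C_{\frac{a}{b}}$ preserves $S(\R)$ and $S^\prime(\R)$) is precisely the fleshed-out version of that omitted argument.
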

We omit the proof as it follows from the corresponding properties of the classical modulation spaces.



\textcolor{black}{\begin{proposition}
Let $m$ be a $v$-moderate weight. Then the $A$-modulation space $\Msp^{r,s}_A$ is invariant under $A$-time-frequency shifts and
$$
\|M_\omega^A T_x^Af\|_{\Msp^{r,s}_{A,m}}\leq v(x,\omega)\|f\|_{\Msp^{r,s}_{A,m}}.
$$
\end{proposition}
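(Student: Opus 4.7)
The plan is to reduce the claim to the analogous invariance for the classical weighted modulation space via the chirp-modulation isomorphism of Theorem~\ref{A modulation}. Recall that $f \in \Msp^{r,s}_{A,m}$ if and only if $C_{\frac{a}{b}} f \in \Msp^{r,s}_{m_b}$, with equivalent norms, where $m_b(x,\omega) = m(x,b\omega)$. If $m$ is $v$-moderate, then $m_b$ is automatically $v_b$-moderate for $v_b(x,\omega) = v(x,b\omega)$, so the classical invariance result (Theorem~11.3.5 of \cite{grochenig_book}) is applicable on the $\Msp^{r,s}_{m_b}$ side.

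The key computation is to intertwine $A$-time-frequency shifts with ordinary time-frequency shifts through $C_{\frac{a}{b}}$. Using $M_\omega^A f = \rho_A(-\omega) M_{\omega/b} f$, the identity (\ref{eq:T_xT_x^A}), and the fact that $M_{\omega/b}$ commutes with $C_{\frac{a}{b}}$, a direct calculation yields
\begin{equation*}
C_{\frac{a}{b}}\bigl(M_\omega^A T_x^A f\bigr) \;=\; \rho_A(-\omega)\, e^{\pi i a x^2/b}\, M_{\omega/b}\, T_x\, C_{\frac{a}{b}} f .
\end{equation*}
Since both prefactors are unimodular, this gives the pointwise identity $\bigl|C_{\frac{a}{b}}(M_\omega^A T_x^A f)\bigr| = \bigl|M_{\omega/b} T_x C_{\frac{a}{b}} f\bigr|$, so these two functions have identical classical $\Msp^{r,s}_{m_b}$ norm (the STFT picks up only a unimodular constant).

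Next I would apply the classical time-frequency invariance to $C_{\frac{a}{b}} f$. Writing $M_{\omega/b} T_x = e^{2\pi i (\omega/b) x} T_x M_{\omega/b}$ (which again drops in modulus) and using Theorem~11.3.5 of \cite{grochenig_book},
\begin{equation*}
\bigl\| M_{\omega/b} T_x C_{\frac{a}{b}} f \bigr\|_{\Msp^{r,s}_{m_b}} \;\leq\; v_b\!\left(x, \tfrac{\omega}{b}\right) \bigl\| C_{\frac{a}{b}} f \bigr\|_{\Msp^{r,s}_{m_b}} \;=\; v(x,\omega)\, \bigl\| C_{\frac{a}{b}} f \bigr\|_{\Msp^{r,s}_{m_b}},
\end{equation*}
where the last equality uses precisely the definition $v_b(x,\omega)=v(x,b\omega)$. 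Transferring back through Theorem~\ref{A modulation} yields the desired inequality.

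The only delicate point is bookkeeping the norm-equivalence constants from Theorem~\ref{A modulation}, since in general these introduce constants depending on $b$. Because the chirp-intertwining is exact up to a unimodular phase, the same equivalence constants appear on both sides of the inequality and cancel, so the clean bound $v(x,\omega)$ survives once one fixes a choice of window $g$ (and thus a fixed norm on each space), as is standard in the modulation-space literature. I do not foresee any other obstacle; the argument is essentially a transport of the Gr\"ochenig inequality across the chirp isomorphism.
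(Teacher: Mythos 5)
Your proof is correct, but it takes a genuinely different route from the paper. The paper stays entirely inside the $A$-framework: it computes, by a direct change of variables in the defining integral, the exact covariance $M_\omega^A T_x^A f\star_A M_{\omega_1}^A g(x_1)= e^{i\theta}\, f\star_A M^A_{\omega_1-\omega}g(x_1-x)$ with a unimodular phase $e^{i\theta}$, and then concludes from the $v$-moderateness of $m$ after a change of variables in the norm integral --- no classical result is invoked and the same window $g$ is used throughout. You instead transport the problem across the chirp isomorphism: the intertwining identity $C_{\frac{a}{b}}(M_\omega^A T_x^A f)=\rho_A(-\omega)e^{\pi iax^2/b}M_{\omega/b}T_x C_{\frac{a}{b}}f$ is correct (it follows from \eqref{eq:T_xT_x^A} and the commutation of the two pointwise multiplications), $m_b$ is indeed $v_b$-moderate with $v_b(x,\omega)=v(x,b\omega)$, and $v_b(x,\omega/b)=v(x,\omega)$, so Gr\"ochenig's Theorem 11.3.5 delivers the right weight value. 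Your flagged ``delicate point'' is real and your resolution is the right one, though it can be made sharper: if you literally invoke the two-sided equivalence of Theorem \ref{A modulation}, the constants $c_1,c_2$ arising from the window change $C_{\frac{a}{b}}g\mapsto g$ do \emph{not} cancel and you would only get $(c_2/c_1)\,v(x,\omega)$; the clean bound survives because the identity in the proof of Theorem \ref{A modulation} is an \emph{exact} isometry (up to the factor $|b|^{1/s-1/2}$, which appears on both sides and cancels) onto the classical $m_b$-norm taken with the fixed window $C_{\frac{a}{b}}g$, and the classical shift inequality holds for any fixed window since STFT covariance is exact. In exchange for this bookkeeping, your argument is shorter and reuses established machinery, whereas the paper's computation is self-contained and yields the exact $A$-covariance formula, which is of independent use (it is exactly what makes the norm computation window-uniform).
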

\begin{proof}
Consider
\begin{align*}
M_\omega^A T_x^Af\star_AM_{\omega_1}^Ag(x_1)&=\int_\R T_y^AM_\omega^AT_x^Af(x_1)M_{\omega_1}g(y)dy\\
&=\int_\R \rho_A(-\omega)e^{-\frac{2\pi ia}{b}y(x_1-y)}e^{\frac{2\pi i\omega}{b}(x_1-y)}e^{-\frac{2\pi ia}{b}x(x_1-x-y)}\\
&\times f(x_1-x-y)\rho_A(-\omega_1)e^{\frac{2\pi i\omega_1}{b}y}g(y)dy\\
&=\rho_A(-\omega)\rho_A(-\omega_1)e^{\frac{2\pi i}{b}(\omega x_1-axx_1+ax^2)}\\
&\times \int_\R e^{-\frac{2\pi ia}{b}y(x_1-x-y)}f(x_1-x-y)M_\frac{\omega_1-\omega}{b}g(y)dy\\
&=\rho_A(-\omega)\rho_A(-\omega_1)\overline{\rho_A(\omega-\omega_1)}e^{\frac{2\pi i}{b}(\omega x_1-axx_1+ax^2)}\\
&\times \int_\R T_y^Af(x_1-x)\rho_A(\omega-\omega_1)M_\frac{\omega_1-\omega}{b}g(y)dy\\
&=e^{\frac{2\pi i}{b}(\omega x_1-axx_1+ax^2+a\omega \omega_1-2p\omega)}\int_\R T_y^Af(x_1-x)M_{\omega_1-\omega}^Ag(y)dy\\
&=e^{\frac{2\pi i}{b}(\omega x_1-axx_1+ax^2+a\omega \omega_1-2p\omega)}
 f\star_AM_{\omega_1-\omega}^Ag(x_1-x).
\end{align*}
Thus
$$
|M_\omega^AT_x^Af\star_AM_{\omega_1}^Ag(x_1)|=|f\star_AM_{\omega_1-\omega}^Ag(x_1-x)|.
$$
Then
\begin{align*}
\|M_\omega^AT_x^Af\|_{\Msp^{r,s}_{A,m}}&=\Big(\int_\R \Big(\int_\R |M_\omega^AT_x^Af\star_AM_{\omega_1}^Ag(x_1)|^rm(x_1,\omega_1)^rdx_1\Big)^{s/r}d\omega_1 \Big)^{1/s}\\
&=\Big( \int_\R \Big( |f\star_AM_{\omega_1-\omega}^Ag(x_1-x)|^rm(x_1,\omega_1)^r \Big)^{s/r} \Big)^{1/s}\\
&=\Big( \int_\R \Big( |f\star_AM_{\omega_1}^Ag(x_1)|^rm(x_1+x,\omega_1+\omega)^r \Big)^{s/r} \Big)^{1/s}\\
&\leq v(x,\omega)\Big( \int_\R \Big( |f\star_AM_{\omega_1}^Ag(x_1)|^rm(x_1,\omega_1)^r \Big)^{s/r} \Big)^{1/s}\\
&= v(x,\omega)\|f\|_{\Msp^{r,s}_{A,m}},
\end{align*}
by applying a change of variables.
\end{proof}}

\textcolor{black}{\begin{corollary}
The $A$-modulation space $\Msp^{r,s}_{A,v}$ is invariant under classical time-frequency shifts and
$$
\|M_\omega T_xf\|_{\Msp^{r,s}_{A,v}}\leq v(x, ax+b\omega )\|f\|_{\Msp^{r,s}_{A,v}}.
$$
\end{corollary}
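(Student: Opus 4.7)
The plan is to reduce the classical time-frequency shifts $M_\omega T_x$ to $A$-time-frequency shifts $M_{\omega'}^A T_{x'}^A$ and then invoke the preceding proposition. The starting observation is that the $A$-operators differ from the classical ones only by unimodular chirps: directly from the definitions one reads off $T_x^A = T_x M_{-ax/b}$, and from $M_\omega^A = \rho_A(-\omega)M_{\omega/b}$ one obtains $M_\sigma = \rho_A(b\sigma)M_{b\sigma}^A$. Combining these with the elementary commutation relation $M_\omega T_x^A = e^{2\pi i\omega x} T_x^A M_\omega$, a short computation yields
\begin{equation*}
M_\omega T_x f = c(x,\omega)\, M_{b\omega+ax}^A T_x^A f,
\end{equation*}
for some complex number $c(x,\omega)$ of modulus one that depends only on $x,\omega$ and the parameters of $A$.

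Since $|c(x,\omega)|=1$ and $A$-convolution is bilinear, multiplying $f$ by $c(x,\omega)$ does not alter the pointwise absolute value of $f\star_A M_{\omega_1}^A g$. Consequently
\begin{equation*}
\|M_\omega T_x f\|_{\Msp^{r,s}_{A,v}} = \|M_{b\omega+ax}^A T_x^A f\|_{\Msp^{r,s}_{A,v}},
\end{equation*}
because the $\Msp^{r,s}_{A,m}$-norm depends on $f$ only through $|f\star_A M_{\omega_1}^A g(x_1)|$.

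Finally, applying the preceding proposition with the pair $(x, b\omega+ax)$ in place of $(x,\omega)$ gives
\begin{equation*}
\|M_{b\omega+ax}^A T_x^A f\|_{\Msp^{r,s}_{A,v}} \leq v(x, ax+b\omega)\,\|f\|_{\Msp^{r,s}_{A,v}},
\end{equation*}
which is precisely the claimed bound; the invariance of $\Msp^{r,s}_{A,v}$ under classical time-frequency shifts is immediate from the finiteness of the right-hand side. The only step requiring care is the bookkeeping that produces the identity $M_\omega T_x f = c(x,\omega)\, M_{b\omega+ax}^A T_x^A f$ with the correct index $b\omega+ax$, but this is a routine manipulation of exponentials. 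No new ideas beyond the previous proposition are needed.
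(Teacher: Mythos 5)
Your proposal is correct and follows essentially the same route as the paper, which proves the corollary in one line from the identity $M_\omega T_x = e^{-\frac{2\pi ia}{b}x^2}\,\overline{\rho_A(-ax-b\omega)}\,M^A_{ax+b\omega}T_x^A$ together with the preceding proposition. One small slip: your intermediate formula $M_\sigma = \rho_A(b\sigma)M^A_{b\sigma}$ should read $M_\sigma = \overline{\rho_A(-b\sigma)}\,M^A_{b\sigma}$ (the factors differ by $e^{2\pi i ab\sigma^2}$), but since your argument only uses $|c(x,\omega)|=1$ and never the explicit value of the constant, this does not affect the validity of the proof.
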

\begin{proof}
Consider
$$
\|M_\omega T_xf\|_{\Msp^{r,s}_{A,v}}=\|M^A_{ax+b\omega }T_x^Af\|_{\Msp^{r,s}_{A,v}}\leq v(x, ax+b\omega )\|f\|_{\Msp^{r,s}_{A,v}},
$$
using $M_\omega T_x=e^{-\frac{2\pi ia}{b}x^2}\overline{\rho_A(-ax-b\omega)}M^A_{ax+b\omega}T_x^A$.
\end{proof}}

\textcolor{black}{\begin{theorem}
Let $\B$ be a Banach space  continuously embedded into the space of tempered distributions with the following properties:
\begin{itemize}
\item[(i)] $\B$ is invariant under $A$-time-frequency shifts and $\|M_\omega^AT_x^Af\|_\B\leq v(x,\omega)\|f\|_\B,$ for all $f\in \B.$
\item[(ii)] $\Msp_{A,v}^1\bigcap \B\neq \{0\}$.
\end{itemize}
Then $\Msp_{A,v}^1$ is continuously embedded in $\B$.
\end{theorem}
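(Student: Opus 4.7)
The plan is to reduce the claim to the classical minimality theorem for weighted Feichtinger algebras (see \cite{FeichtingerLCA}), transferring everything via the chirp multiplier $C_{a/b}$. The dictionary is already in the paper: Theorem~\ref{A modulation} identifies $f\in \Msp^1_{A,v}$ with $C_{a/b}f\in \Msp^1_{v_b}$, where $v_b(x,\omega)=v(x,b\omega)$, and the preceding corollary identifies classical and $A$-time-frequency shifts up to unimodular factors.

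Concretely, I would introduce the transferred Banach space $\tilde{\B}:=C_{a/b}(\B)$, endowed with $\|g\|_{\tilde{\B}}:=\|C_{-a/b}g\|_{\B}$. Since $C_{a/b}$ is an isometric bijection on $S^\prime(\R)$, $\tilde{\B}$ is a Banach space continuously embedded in $S^\prime(\R)$. A direct calculation, expanding $(t-x)^2$ inside the chirp, yields an identity of the form
\begin{equation*}
C_{-a/b}\bigl(M_\omega T_x C_{a/b} f\bigr) = e^{\pi i a x^2/b}\, M_{\omega-ax/b}\, T_x f,
\end{equation*}
so that $\|M_\omega T_x g\|_{\tilde{\B}}=\|M_{\omega-ax/b}T_x f\|_\B$. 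Combining this with the hypothesis $\|M^A_{\omega'} T^A_x f\|_\B\leq v(x,\omega')\|f\|_\B$ and the relation $M_{\omega'} T_x = (\text{unimodular})\,M^A_{ax+b\omega'}T^A_x$ from the preceding corollary, one obtains
\begin{equation*}
\|M_\omega T_x g\|_{\tilde{\B}}\leq v\bigl(x,\,ax+b(\omega-ax/b)\bigr)\,\|g\|_{\tilde{\B}}=v_b(x,\omega)\,\|g\|_{\tilde{\B}},
\end{equation*}
so $\tilde{\B}$ is invariant under classical time-frequency shifts with weight $v_b$, which is submultiplicative whenever $v$ is.

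Since $\Msp^1_{A,v}\cap \B\neq \{0\}$ by hypothesis and $C_{a/b}$ takes non-zero elements to non-zero elements, Theorem~\ref{A modulation} gives $\Msp^1_{v_b}\cap \tilde{\B}\neq \{0\}$. The classical minimality theorem for $\Msp^1_{v_b}$ then furnishes a continuous embedding $\Msp^1_{v_b}\hookrightarrow \tilde{\B}$, which, transferred back through $C_{-a/b}$ together with the norm equivalences above, becomes the desired continuous embedding $\Msp^1_{A,v}\hookrightarrow \B$.

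The main obstacle is not conceptual but bookkeeping: one has to chase the unimodular factors through the computations carefully and verify that the weight transforms precisely from $v$ into $v_b$ with no leftover polynomial growth. Once that dictionary is pinned down, the conclusion is a direct application of the classical minimality theorem.
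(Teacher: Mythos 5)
Your proof is correct, but it takes a genuinely different route from the paper. The paper does not invoke the classical minimality theorem as a black box; instead it inlines its proof: it takes a nonzero $g\in \Msp^1_{A,v}\cap\B$, applies the atomic decomposition theorem (Theorem 12.1.8 in \cite{grochenig_book}) to write $C_{a/b}f=\sum_n c_n T_{x_n}M_{\omega_n/b}C_{a/b}g$ with $\sum_n|c_n|v(x_n,\omega_n)$ controlled by $\|C_{a/b}f\|_{\Msp^1_{v_b}}$, pulls each atom back through $C_{a/b}^{-1}$ and identifies it (up to a unimodular phase) as $M^A_{\omega_n}T^A_{x_n}g$, and then sums the series directly in $\B$ using hypothesis (i). You instead conjugate the \emph{space}: you set $\tilde\B=C_{a/b}(\B)$ with the transferred norm, verify it is a Banach space of tempered distributions invariant under classical time-frequency shifts with weight exactly $v_b$ (your identity $C_{-a/b}(M_\omega T_x C_{a/b}f)=e^{\pi i a x^2/b}M_{\omega-ax/b}T_xf$ is correct, and $ax+b(\omega-ax/b)=b\omega$ does give $v_b(x,\omega)$ with no leftover growth, $v_b$ being submultiplicative when $v$ is), check $\Msp^1_{v_b}\cap\tilde\B\neq\{0\}$ via Theorem \ref{A modulation}, and cite the classical minimality theorem (Theorem 12.1.9 in \cite{grochenig_book}) before transferring back. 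What your approach buys is modularity and brevity: the unimodular phases never enter a series expansion, and the same conjugation trick would transfer any classical shift-invariance result wholesale. What the paper's approach buys is self-containedness and an explicit exhibit of the mechanism --- the pulled-back Gabor atoms are literally $A$-time-frequency shifts of $g$, which both proves the embedding and confirms the dictionary between classical and $A$-shifts. One cosmetic caveat in your write-up: calling $C_{a/b}$ ``isometric on $S'(\R)$'' is loose (there is no norm there); what you actually need, and have, is that $C_{\pm a/b}$ are continuous bijections of $S'(\R)$ and that $C_{a/b}:\B\to\tilde\B$ is an isometric isomorphism by construction, so $\tilde\B$ is complete and continuously embedded in $S'(\R)$.
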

\begin{proof}
Choose a non-zero $g\in \Msp_{A,v}^1\bigcap \B$. Then $C_\frac{a}{b}g\in \Msp^1_{v_b}$, where $v_b(x,\omega)=v(x, b\omega)$. Let $f\in \Msp_{A,v}^1$. Then $C_\frac{a}{b}f\in \Msp^1_{v_b}$. Now using Theorem 12.1.8 in \cite{grochenig_book}, we can express $C_\frac{a}{b}f$ as a non-uniform Gabor expansion of time-frequency shifts of $C_\frac{a}{b}g$. In other words,
\begin{equation}\label{eq:minimality1}
C_\frac{a}{b}f=\sum_{n=1}^\infty c_nT_{x_n}M_{\frac{\omega_n}{b}}C_\frac{a}{b}g,
\end{equation}
with $\|C_\frac{a}{b}f\|_{\Msp^1_{v_b}}=\displaystyle \inf\sum_{n=1}^\infty |c_n| v_b(x_n,\frac{\omega_n}{b})=\inf\sum_{n=1}^\infty |c_n| v(x_n,\omega_n)$, where the infimum is taken over all such representations of $C_\frac{a}{b}f$. Using \eqref{eq:minimality1} $f$ can be expressed as
$$
f=\sum_{n=1}^\infty c_n C_\frac{a}{b}^{-1}T_{x_n}M_{\frac{\omega_n}{b}}C_\frac{a}{b}g.
$$
Consider
\begin{align*}
C_\frac{a}{b}^{-1}T_{x_n}M_{\frac{\omega_n}{b}}C_\frac{a}{b}g(t)&=e^{-\frac{\pi ia}{b}t^2}e^{2\pi i\frac{\omega_n}{b}(t-x_n)}e^{\frac{\pi ia}{b}(t-x_n)^2}g(t-x_n)\\
&=e^{-\frac{\pi ia}{b}x_n^2}e^{2\pi i\frac{\omega_n}{b}(t-x_n)}e^{-\frac{2\pi ia}{b}x_n(t-x_n)}g(t-x_n)\\
&=e^{-\frac{\pi ia}{b}x_n^2}e^{-\frac{2\pi i}{b}x_n\omega_n}\overline{\rho_A(-\omega_n)}\rho_A(-\omega_n)M_\frac{\omega_n}{b}T_{x_n}^Ag(t)\\
&=\overline{\lambda(x_n+\omega_n)}e^{\frac{2\pi ia}{b}p\omega_n}M_{\omega_n}^AT_{x_n}^Ag(t).
\end{align*}
Thus
$$
f=\displaystyle\sum_{n=1}^\infty c_n \overline{\lambda(x_n+\omega_n)}e^{\frac{2\pi ia}{b}p\omega_n}M_{\omega_n}^AT_{x_n}^Ag(t).
$$
Then
\begin{equation}\label{eq:minimality3}
\|f\|_\B \leq \displaystyle \sum_{n=1}^\infty |c_n|\|M_{\omega_n}^AT_{x_n}^Ag\|_\B \leq \sum_{n=1}^\infty |c_n|v(x_n,\omega_n)\|g\|_\B < \infty.
\end{equation}
This shows that $\Msp^1_{A,v}\subseteq \B$. Taking infimum over all representations of $C_\frac{a}{b}f$, \eqref{eq:minimality3} turns out to be
$$
\|f\|_\B\leq \|C_\frac{a}{b}f\|_{\Msp^1_{v_b}}\|g\|_\B\leq C \|f\|_{\Msp^1_{A,v}}, \ \ \text{ for some $C>0$},
$$
using Theorem \ref{A modulation}. Hence the inclusion is continuous.
\end{proof}}

\section{Multipliers and Littlewood-Paley theorem associated with SAFT}

First we prove an analogue of Theorem 10 in \cite{FeiAcha06}.

\begin{theorem}
Given a bounded linear operator
 $T:\Msp^{r_1,s_1}_A\to \Msp^{r_2,s_2}_A$, one has 
 the following.
\begin{itemize}
\item[(i)] If $TT_x^A=T_x^AT$ for all $x\in \R$, then there exists a unique $u\in S^\prime_0(\R)$ such that $Tf=u\star_Af$, for all $f\in S_0(\R)$.

\item[(ii)] If $TT_x=T_xT$ for all $x\in \R$, then there exists a unique $u\in S^\prime_0(\R)$ such that
$Tf=u\star f$, for all $f\in S_0(\R).$
\end{itemize}
\end{theorem}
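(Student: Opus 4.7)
The overall strategy is to reduce both statements to the Feichtinger--Narimani theorem (\cite[Theorem 10]{FeiAcha06}). The tool is the chirp multiplication $C_{a/b}$, which by Theorem \ref{A modulation} and Remark \ref{chirp inverse} is a Banach space isomorphism from $\Msp^{r,s}_A$ onto the classical modulation space $\Msp^{r,s}$, and which by Remark \ref{rk:S_0 chirp} descends to bijections of $S_0(\R)$ and $S_0^\prime(\R)$ onto themselves. Its interaction with translations is governed by \eqref{eq:T_xT_x^A}, which rearranges into the intertwining
\begin{equation*}
C_{a/b}\,T_x^A\,C_{-a/b} = e^{\pi i a x^2/b}\,T_x,
\end{equation*}
so that conjugation by $C_{a/b}$ converts $A$-translation into ordinary translation up to a unimodular phase.

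For part (i), I would set $\tilde T := C_{a/b}\,T\,C_{-a/b}\colon \Msp^{r_1,s_1}\to \Msp^{r_2,s_2}$, which is bounded by the isomorphism. A direct manipulation using the intertwining above together with the hypothesis $T T_x^A = T_x^A T$ shows that the two phase factors $e^{\pm \pi i a x^2/b}$ cancel and yield $\tilde T\,T_x = T_x\,\tilde T$ for every $x\in\R$. Applying \cite[Theorem 10]{FeiAcha06} to $\tilde T$ then delivers a unique $v\in S_0^\prime(\R)$ with $\tilde Tg = v\star g$ for all $g\in S_0(\R)$. Setting $u := \sqrt{|b|}\,C_{-a/b}v \in S_0^\prime(\R)$ and invoking Proposition \ref{Pro:tempered chirp} (extended to $(S_0, S_0^\prime)$ by Remark \ref{rk:S_0 chirp}) give
\begin{equation*}
C_{a/b}(u\star_A f) = \tfrac{1}{\sqrt{|b|}}\bigl(C_{a/b}u\star C_{a/b}f\bigr) = v\star C_{a/b}f = C_{a/b}(Tf)
\end{equation*}
for all $f\in S_0(\R)$. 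The injectivity of $C_{a/b}$ on $S_0^\prime(\R)$ then forces $Tf = u\star_A f$, and uniqueness of $u$ reduces to uniqueness of $v$.

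For part (ii), the naive conjugation fails because $C_{a/b}$ intertwines $A$-translations with ordinary translations but not ordinary translations with themselves. I would instead exploit the continuous inclusions $S_0(\R)\hookrightarrow \Msp^{r,s}_A\hookrightarrow S_0^\prime(\R)$ established as corollaries to Theorem \ref{A modulation}: restricting $T$ gives a bounded linear map $S_0(\R)\to S_0^\prime(\R)$ which commutes with classical translations on $S_0(\R)$. Moreover, since $\Msp^{r,s}_A = C_{-a/b}(\Msp^{r,s})$ and $\Msp^{r,s}$ is preserved by $T_x$ and $M_y$, each $\Msp^{r,s}_A$ is itself translation invariant and sandwiched continuously between $S_0(\R)$ and $S_0^\prime(\R)$ with $S_0$ dense. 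The proof of \cite[Theorem 10]{FeiAcha06} depends only on this sandwich-plus-translation-invariance structure together with the boundedness of $T$, so it transfers verbatim to the present setting and produces the required $u\in S_0^\prime(\R)$ with $Tf = u\star f$; uniqueness is obtained by the standard density argument.

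The main obstacle I anticipate lies in part (ii). Because the chirp $C_{a/b}$ implements a shear of the time-frequency plane, it is not bounded on $\Msp^{r,s}$ when $r\neq s$, and so $\Msp^{r,s}_A$ and $\Msp^{r,s}$ fail to coincide as Banach spaces; the classical Feichtinger--Narimani theorem therefore cannot be invoked as a strict black box. The delicate task is to verify that the proof of that theorem relies only on the abstract structural features listed above, rather than on any intrinsic feature of the classical modulation spaces such as Gabor atomic decompositions tied to a Gaussian window.
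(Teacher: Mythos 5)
Your proposal is correct and follows essentially the same route as the paper: for part (i) you perform the identical chirp conjugation $\tilde T = C_{a/b}TC_{-a/b}$, cancel the phases from \eqref{eq:T_xT_x^A}, invoke Theorem 10 of \cite{FeiAcha06}, and recover $u=\sqrt{|b|}\,C_{-a/b}v$ via Proposition \ref{Pro:tempered chirp} and Remark \ref{rk:S_0 chirp}, exactly as in the text. For part (ii) the paper likewise does not conjugate but simply re-runs the Feichtinger--Narimani argument with $\Msp^{r,s}_A$ in place of $\Msp^{r,s}$, which is precisely your proposed transfer (and your closing caveat correctly identifies why the classical theorem cannot be used as a black box when $r\neq s$).
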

\begin{proof}
(i) Define $\tilde{T}:\Msp^{r_1,s_1}\to \Msp^{r_2,s_2}$ by $\tilde{T}=C_\frac{a}{b}TC_\frac{a}{b}^{-1}.$ Then $\tilde{T}$ is bounded and linear. Further, using $TT_x^Af=T_x^ATf$ and (\ref{eq:T_xT_x^A}), we get $$Te^{\frac{\pi ia}{b}x^2}C^{-1}_{\frac{a}{b}}T_xC_{\frac{a}{b}}f=e^{\frac{\pi ia}{b} x^2}C^{-1}_{\frac{a}{b}}T_xC_{\frac{a}{b}}Tf.$$ This is equivalent to
$$
C_{\frac{a}{b}}TC^{-1}_{\frac{a}{b}}T_xC_{\frac{a}{b}}f=T_xC_{\frac{a}{b}}TC_{\frac{a}{b}}^{-1}C_{\frac{a}{b}}f.
$$
In other words, $\tilde{T}T_xC_\frac{a}{b}f=T_x\tilde{T}C_\frac{a}{b}f$, for $f\in \Msp^{r_1,s_1}_A$, from which it follows that $\tilde{T}$ commutes with classical translations.
Then by Theorem 10 in \cite{FeiAcha06}, there exists a unique $u\in S_0^\prime(\R) $ such that
$
\tilde{T}f=u\star f$, for all $f\in S_0(\R)$. This means that
$$
C_\frac{a}{b}TC_\frac{a}{b}^{-1}f=C_\frac{a}{b}C_\frac{a}{b}^{-1}u\star C_\frac{a}{b}C_\frac{a}{b}^{-1}f=\sqrt{|b|}C_\frac{a}{b}(C_\frac{a}{b}^{-1}u\star_A C_\frac{a}{b}^{-1}f),
$$
using Proposition \ref{Pro:tempered chirp} and Remark \ref{rk:S_0 chirp}. Define $u_1=\sqrt{|b|}C_\frac{a}{b}^{-1}u$, which belongs to  $S_0^\prime(\R).$ Thus
$$
C_\frac{a}{b}TC_\frac{a}{b}^{-1}f=C_\frac{a}{b}(u_1\star_AC_\frac{a}{b}^{-1}f),
$$
from which our assertion follows.\\
(ii) We omit the proof as it is using the same ideas as in the proof of Theorem 10 in \cite{FeiAcha06},  with $\Msp^{r,s}_A$ in place of $\Msp^{r,s}$.
\end{proof}


\par Recall that $m\in L^\infty(\R)$ is called a Fourier multiplier for $L^r(\R),~1\leq r<\infty$, if the operator defined by $(T_mf)~\widehat{}~(\xi)=m(\xi)\widehat{f}(\xi)$ on $L^2(\R)\cap L^r(\R)$, extends to a bounded linear operator on $L^r(\R)$.

\begin{definition}
Let $m\in L^\infty (\R ).$ Then $m$ is called a SAFT multiplier for $L^r(\R), 1\leq r<\infty$ if the operator $T_{A,m}$, defined by
$$
\F_A(T_{A,m}f)(\omega)=m(\omega)\F_A(f)(\omega), \ \ f\in L^2(\R)\cap L^r(\R).
$$
extends to a bounded linear operator on $L^r(\R)$. \end{definition}

\begin{theorem}(H\"{o}rmander)
Let $m\in C^1(\R \setminus \{0\})$ satisfy $|m^\prime (x)|\leq C|x|^{-1}$ for some $C>0$. Then for any $1<r<\infty$, $m$ is a SAFT multiplier for $L^r(\R)$.
\end{theorem}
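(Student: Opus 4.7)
The plan is to reduce this to the classical Hörmander multiplier theorem by exploiting the factorization of the SAFT through the classical Fourier transform provided in \eqref{eq:SAFT-FT}, namely $\F_A(f)(\omega)=\frac{\eta_A(\omega)}{\sqrt{|b|}}\widehat{\rho_Af}(\omega/b)$. The defining identity $\F_A(T_{A,m}f)(\omega)=m(\omega)\F_A(f)(\omega)$ then rewrites, after cancelling the unimodular factor $\eta_A(\omega)/\sqrt{|b|}$ and setting $\xi=\omega/b$, as
\begin{equation*}
\widehat{\rho_A\,T_{A,m}f}(\xi)=m(b\xi)\,\widehat{\rho_Af}(\xi).
\end{equation*}
So if I define $\tilde m(\xi):=m(b\xi)$ and denote by $T_{\tilde m}$ the classical Fourier multiplier operator associated with $\tilde m$, the relation becomes the intertwining identity $\rho_A\,T_{A,m}f=T_{\tilde m}(\rho_Af)$, i.e.\ $T_{A,m}=\overline{\rho_A}\,T_{\tilde m}\,\rho_A$ on $L^2\cap L^r$.

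The second step is to verify that $\tilde m$ satisfies the hypotheses of the classical Hörmander multiplier theorem. Boundedness of $\tilde m$ is inherited from $m\in L^\infty(\R)$, and from the chain rule $\tilde m'(\xi)=b\,m'(b\xi)$ we get $|\tilde m'(\xi)|\le |b|\cdot C/|b\xi|=C/|\xi|$ on $\R\setminus\{0\}$. Thus $\tilde m$ meets the classical hypothesis with the same constant, so $T_{\tilde m}$ extends to a bounded operator on $L^r(\R)$ for $1<r<\infty$.

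The final step transports this boundedness back to $T_{A,m}$. Since $\rho_A$ is continuous of absolute value $1$, pointwise multiplication by $\rho_A$ (and by $\overline{\rho_A}$) is an isometry of $L^r(\R)$ for every $1\le r\le\infty$, as already noted in the preliminaries. Therefore for $f\in L^2\cap L^r$,
\begin{equation*}
\|T_{A,m}f\|_r=\|\rho_A T_{A,m}f\|_r=\|T_{\tilde m}(\rho_Af)\|_r\le \|T_{\tilde m}\|_{L^r\to L^r}\|\rho_Af\|_r=\|T_{\tilde m}\|_{L^r\to L^r}\|f\|_r,
\end{equation*}
and by density $T_{A,m}$ extends to a bounded linear operator on all of $L^r(\R)$. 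I do not anticipate any real obstacle here: the argument is a conjugation by the isometry $\rho_A$ composed with a dilation of the symbol, both of which preserve the Hörmander hypothesis, so the entire content of the theorem is a clean transfer from the classical result.
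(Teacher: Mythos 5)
Your proposal is correct and follows essentially the same route as the paper's own proof: both cancel the unimodular factor $\eta_A(\omega)/\sqrt{|b|}$ in \eqref{eq:SAFT-FT} to obtain the intertwining $\rho_A T_{A,m}\overline{\rho}_A = T_{m(b\cdot)}$, verify that the dilated symbol satisfies $|m_1'(x)|\le C|x|^{-1}$ with the same constant, apply the classical H\"ormander theorem, and transfer the $L^r$ bound back using that multiplication by $\rho_A$ is an isometry of $L^r(\R)$. No gaps; the density remark at the end is a harmless (and correct) extra detail the paper leaves implicit.
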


\begin{proof}
We first observe that
$$\F_A(T_{A,m}f)(\omega)=m(\omega)\F_A(f)(\omega)$$ is equivalent to $$\frac{\eta_A(\omega)}{\sqrt{|b|}} (\rho_AT_{A,m}f)~\widehat \ (\omega/b)=\frac{\eta_A(\omega)}{\sqrt{|b|}}m(\omega)(\rho_Af)~\widehat \ (\omega/b).$$
This in turn implies that $$(\rho_AT_{A,m}\bar \rho_Af)~\widehat ~ (\omega)=m(b\omega)\widehat{f}(\omega).$$
Further, let $m_1(x)=m(bx)$. Then $m_1^\prime(x)=bm^\prime(bx)$. Hence $$|m_1^\prime (x)|\leq |b|C|bx|^{-1}=C|x|^{-1}.$$ Now by applying classical H\"{o}rmander theorem to $m_1$, it follows that $m_1$ is a Fourier multiplier for $L^r(\R)$. Thus we can find $C^\prime >0$ such that
$$
\|\rho_AT_{A,m}\bar \rho_Af\|_r\leq C^\prime \|f\|_r.
$$
In other words,
$$
\|T_{A,m}\bar \rho_Af\|_r\leq C^\prime \|\bar \rho_Af\|_r,
$$
which leads to $\|T_{A,m}f\|_r\leq C^\prime \|f\|_r$, proving our assertion.
\end{proof}

\begin{theorem}(Littlewood-Paley)
For any  $1 < r < \infty$   there exist $m_r,\ M_r>0$ such that
$$
m_r\|f\|_r\leq \|(\sum_{j\in \Z}|S_jf|^2)^{1/2}\|_r\leq M_r\|f\|_r,
\quad f \in L^r(\R),
$$
with 
$\F_A(S_jf)(\omega)=\bigchi_{\Delta_j}(\omega)\F_A(f)(\omega)$, for
$\Delta_j=[-2^{j+1},-2^j]\cup [2^j,2^{j+1}].$
\end{theorem}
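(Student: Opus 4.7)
The strategy is to reduce the Littlewood--Paley inequality for the SAFT to the classical Littlewood--Paley inequality by conjugating with the unimodular multiplier $\rho_A$ and the dilation that relates the variable $\omega$ with $\omega/b$. Using the identity $\F_A f(\omega) = \eta_A(\omega)|b|^{-1/2}(\rho_A f)\widehat{\,}(\omega/b)$ from \eqref{eq:SAFT-FT}, the defining relation $\F_A(S_jf) = \bigchi_{\Delta_j}\F_A f$ becomes, after cancelling the unimodular factor $\eta_A$ and substituting $u=\omega/b$, the identity
\begin{equation*}
(\rho_A S_j f)\widehat{\,}(u) \;=\; \bigchi_{\Delta_j/b}(u)\,(\rho_A f)\widehat{\,}(u).
\end{equation*}
Because $\Delta_j$ is symmetric about the origin, $\Delta_j/b = \Delta_j/|b|$, so $\rho_A S_j f = T_{j,|b|}(\rho_A f)$, where $T_{j,c}$ denotes the classical Fourier multiplier with symbol $\bigchi_{\Delta_j/c}$.

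Next I would use that $|\rho_A|\equiv 1$, so that $\rho_A$ is a pointwise isometry on every $L^r(\R)$: this gives the pointwise equality $|S_j f(x)|=|T_{j,|b|}(\rho_A f)(x)|$ and the norm equality $\|f\|_r = \|\rho_A f\|_r$. Consequently
\begin{equation*}
\Bigl\|\Bigl(\sum_{j\in\Z}|S_j f|^2\Bigr)^{1/2}\Bigr\|_r
= \Bigl\|\Bigl(\sum_{j\in\Z}|T_{j,|b|}(\rho_A f)|^2\Bigr)^{1/2}\Bigr\|_r,
\end{equation*}
so it suffices to establish Littlewood--Paley for the rescaled dyadic decomposition $\{\Delta_j/|b|\}_{j\in\Z}$ with the classical Fourier transform.

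The remaining task is to transfer the classical theorem from $\{\Delta_j\}$ to $\{\Delta_j/|b|\}$. I would use the dilation operator $D_c h(x)=h(cx)$, which satisfies $\|D_c h\|_r = |c|^{-1/r}\|h\|_r$ and the conjugation relation $T_{j,c} = D_{1/c}\,T_j\,D_c$, where $T_j$ is the standard Littlewood--Paley projection onto $\Delta_j$. Pulling $D_{1/c}$ out of the square function and using the homogeneity of the $L^r$ norm under $D_{1/c}$, the factors $|c|^{\pm 1/r}$ cancel, and the constants $m_r,M_r$ from the classical Littlewood--Paley theorem transfer verbatim to the scaled family. Applied with $c=|b|$ this yields the bound for $T_{j,|b|}$ acting on $\rho_A f$, and then the display above closes the argument.

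The only mildly delicate point is bookkeeping: one must verify that interchanging absolute value with $\rho_A$ and with the dilation is legitimate termwise in the square function, and that the computation works irrespective of the sign of $b$. Since $\Delta_j$ is symmetric, the sign issue disappears after the substitution $\Delta_j/b = \Delta_j/|b|$, and the other interchanges are immediate from $|\rho_A|\equiv 1$ and $|D_c h|=D_c|h|$. No further multiplier theory beyond the classical Littlewood--Paley theorem is required.
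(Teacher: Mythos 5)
Your proposal is correct and follows essentially the same route as the paper: both reduce the SAFT square-function estimate to the classical Littlewood--Paley theorem by conjugating with the unimodular chirp $\rho_A$ (an $L^r$ isometry) and rewriting the defining relation as $(\rho_A S_j f)\widehat{\;}(\omega)=\bigchi_{\Delta_j/b}(\omega)(\rho_A f)\widehat{\;}(\omega)$. The only differences are cosmetic refinements on your side: the symmetry observation $\Delta_j/b=\Delta_j/|b|$ replaces the paper's explicit case split on the sign of $b$, and your dilation-conjugation identity $T_{j,c}=D_{1/c}T_jD_c$ makes explicit (with the cancellation of the $|c|^{\pm 1/r}$ factors) why the classical constants transfer unchanged to the scaled family $\{\Delta_j/|b|\}$, a step the paper invokes without proof.
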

\begin{proof}
First we observe that
\[\frac{1}{b}\Delta_j=
\begin{cases}
\big(-\frac{2^{j+1}}{b},-\frac{2^j}{b}\big]\bigcup \big[\frac{2^j}{b},\frac{2^{j+1}}{b}\big), &  \ b>0\\
\big[-\frac{2^j}{b},-\frac{2^{j+1}}{b}\big)\bigcup \big(\frac{2^{j+1}}{b},\frac{2^j}{b}\big], & \ b<0
\end{cases}.
\]
Further we can see that
$$ \F_A(S_jf)(\omega)=\bigchi_{\Delta_j}(\omega)\F_A(f)(\omega),$$ 
which is equivalent to
 $$\frac{\eta_A(\omega)}{\sqrt{|b|}}(\rho_AS_jf)~\widehat ~ (\omega/b)=\bigchi_{\Delta_j}(b\omega)\frac{\eta_A(\omega)}{\sqrt{|b|}}(\rho_Af)~\widehat ~ (\omega/b).$$ In other words,
  $$(\rho_AS_jf)~\widehat{}~(\omega)=\bigchi_{\Delta_j}(b\omega)(\rho_Af)~\widehat{}~(\omega),$$ which leads to
 $$(\rho_AS_j\bar \rho_Af)~\widehat \ (\omega)=\bigchi_{\frac{1}{b}\Delta_j}(\omega)\widehat{f}(\omega).$$

Let $\tilde{S}_j=\rho_AS_j\bar \rho_A$. Then $(\tilde{S}_jf)~\widehat ~ (\omega)=\bigchi_{\frac{1}{b}\Delta_j}(\omega)\widehat{f}(\omega).$ Now by applying classical Littlewood-Paley theorem to the sequence of intervals $\{\frac{1}{b}\Delta_j:j\in \Z\}$ we can find $m_r,M_r>0$ such that\\
$$
m_r\|f\|_r\leq \|\big(\sum_{j \in \Z}|\tilde{S}_jf|^2\big)^{1/2}\|_r\leq M_r\|f\|_r.$$ In other words,
$$m_r\|\bar{\rho}_Af\|_r\leq \|\big(\sum_{j\in \Z}|\rho_AS_j\bar{\rho}_Af|^2\big)^{1/2}\|_r\leq M_r \|\bar \rho_Af\|_r,
$$ which in turn implies that
$$m_r\|f\|_r\leq \|\big(\sum_{j\in \Z}|S_jf|^2\big)^{1/2}\|_r\leq M_r\|f\|_r.$$
\end{proof}


\textbf{Data availability statement}\\ 
\ \ The manuscript has no associated data.

\bibliography{references}
\bibliographystyle{amsplain}

\end{document}